\documentclass[11pt,reqno]{amsart}

\usepackage[T1]{fontenc}
\usepackage{lmodern}
\usepackage[a4paper,left=25mm,right=25mm,top=25mm,bottom=25mm,
  headheight=14pt,headsep=8mm]{geometry}
\usepackage{amssymb,mathtools}
\usepackage{enumitem}
\usepackage{array,longtable,booktabs,needspace}
\usepackage{microtype}
\usepackage[authoryear,round]{natbib}
\usepackage{xcolor}
\usepackage[colorlinks=true,linkcolor=black,citecolor=blue,
  urlcolor=blue]{hyperref}

\newtheorem{theorem}{Theorem}[section]
\newtheorem{proposition}[theorem]{Proposition}
\newtheorem{lemma}[theorem]{Lemma}
\newtheorem{corollary}[theorem]{Corollary}
\theoremstyle{definition}

\theoremstyle{remark}
\newtheorem{remark}[theorem]{Remark}
\numberwithin{equation}{section}
\AddToHook{cmd/section/before}{\Needspace{5\baselineskip}}
\AddToHook{cmd/subsection/before}{\Needspace{4\baselineskip}}
\AddToHook{env/lemma/before}{\Needspace{6\baselineskip}}
\AddToHook{env/proposition/before}{\Needspace{6\baselineskip}}
\AtBeginDocument{%
  \setlength{\abovedisplayskip}{9pt plus 3pt minus 2pt}%
  \setlength{\belowdisplayskip}{9pt plus 3pt minus 2pt}%
  \setlength{\abovedisplayshortskip}{5pt plus 3pt}%
  \setlength{\belowdisplayshortskip}{6pt plus 3pt minus 2pt}%
}

\newcommand{\E}{\mathbb{E}}
\newcommand{\Z}{\mathbb{Z}}
\newcommand{\N}{\mathbb{N}}
\newcommand{\R}{\mathbb{R}}
\newcommand{\one}{\mathbf{1}}
\DeclareMathOperator{\sgn}{sgn}
\DeclareMathOperator{\dist}{dist}
\newcommand{\symd}{\mathbin{\triangle}}

\title[Extremal cosine sign correlations]{Extremal cosine sign correlations}

\author[Shengyang Zang]{Shengyang Zang}
\address{Shengyang Zang, School of Mathematical Sciences, Nankai University, Tianjin, China}
\email{1120230032@mail.nankai.edu.cn}

\author[Yingyu Zhao]{Yingyu Zhao}
\address{Yingyu Zhao, Qiuzhen College, Tsinghua University, Beijing, China}
\email{zhao-yy25@mails.tsinghua.edu.cn}

\author[Yunhao Zhao]{Yunhao Zhao}
\address{Yunhao Zhao, School of Advanced Interdisciplinary Sciences, University of Chinese Academy of Sciences, Beijing, China}
\email{zhaoyunhao23@mails.ucas.ac.cn}

\author[Chen Zhou]{Chen Zhou}
\address{Chen Zhou, School of Mathematical Sciences, Zhejiang University of Technology, Hangzhou, China}
\email{zcygo01@163.com}

\subjclass[2020]{Primary 42A05; Secondary 11A05, 11B68, 11K06, 26D15}
\keywords{Cosine sign correlation, square waves, Bernoulli polynomials,
cyclic intervals, difference sets, exponential decay}
\date{}

\hypersetup{
  pdftitle={Extremal cosine sign correlations},
  pdfauthor={Shengyang Zang, Yingyu Zhao, Yunhao Zhao, Chen Zhou}
}

\begin{document}
\raggedbottom

\begin{abstract}
We prove that four cosines with distinct positive integer frequencies have
the same sign on at least $1/33$ of a period, with equality precisely for
positive integer dilates of $\{1,3,11,33\}$.
If both odd and even frequencies occur, the optimal proportion is $1/24$;
equality holds precisely for odd positive integer dilates of
$\{1,3,4,12\}$ and $\{2,4,5,15\}$.
For arbitrary $n$, we establish the universal lower bound $4^{1-n}$
and construct frequency sets for which this proportion is at most
$16n^\theta4^{-n}$, where $\theta=\log_2(3/2)$.
These bounds determine the optimal exponential decay rate $1/4$.
\end{abstract}

\maketitle

\section{Introduction}

Sign correlations measure the agreement of oscillatory functions after
their amplitudes have been discarded. Even for distinct cosine modes,
whose inner product over a period is zero, the signs can remain
correlated. The positions of their zeros are governed by the
frequencies, and arithmetic relations among those frequencies can
produce substantial departures from an independent-sign model.
Cosine signs therefore provide a concrete setting in which to study
how arithmetic structure affects simultaneous oscillation.

One source of the question is the study of simultaneous sign patterns
of orthogonal functions. In their work on an uncertainty principle
for roots, \citet*{GOS17} proved that, at any prescribed finite set of
real points, infinitely many Hermite polynomials of degree divisible
by four are positive at every point. Their argument uses trigonometric
asymptotics and recurrence of linear flows on a torus. The same work
also exhibits arithmetic restrictions that exclude certain other sign
patterns along this subsequence once the degree is sufficiently large. These
results lead naturally from the occurrence of a sign pattern to the
quantitative question of how often it occurs.

In their subsequent work, \citet*{GOS} established sign correlation
laws for sequences with suitable trigonometric asymptotics, including
eigenfunctions of Schr\"odinger operators and classical orthogonal
polynomials. They fix two spatial points and vary the function index.
Under the stated asymptotic and equidistribution hypotheses, the
limiting proportion of agreement lies between $1/3$ and $2/3$, and
both bounds are sharp. They explicitly asked whether analogous
results hold at three or more points, and which prescribed sign
patterns have densities uniformly separated from $0$ or $1$.
The integer-frequency cosine problem studied by \citet{DGLLP}
isolates the arithmetic trigonometric model behind these questions.
Its spatial average can be analyzed directly, with the frequencies
encoding the dependence among the signs.

A related finite problem arises when the entries of a Fourier matrix
are replaced by their signs: \citet{CS} quantify the resulting failure
of orthogonality for the signed discrete cosine transform. Together,
these developments motivate the study of joint sign patterns beyond
ordinary pairwise orthogonality, and of sharp bounds that remain
valid across different arithmetic configurations.

\Needspace{10\baselineskip}
Let $\N=\{1,2,\ldots\}$. For a nonempty finite list
$A=(a_1,\ldots,a_n)$ of positive integers, define
\[
P(A)=\frac{1}{2\pi}\,\bigl|\{x\in[0,2\pi):
\cos(a_1x),\ldots,\cos(a_nx)\text{ have the same sign}\}\bigr|
\]
and
\[
p_n=\inf_{1\le a_1<\cdots<a_n}P(a_1,\ldots,a_n).
\]
Here $|\cdot|$ denotes Lebesgue measure. We use $P$ for both lists and
sets, since repetitions do not affect the event, and write
$dA=(da_1,\ldots,da_n)$ for $d\in\N$. For each fixed $A$, all the
cosines are positive near the origin, so $P(A)>0$. The interval
guaranteed by this observation shrinks as the largest frequency
increases, and hence does not give a positive lower bound depending
only on $n$. The extremal problem asks both how small $P(A)$ can be
for a fixed number of frequencies and how its infimum decays as that
number grows.

Each individual sign is equally likely to be positive or negative.
For $n$ independent fair signs, the probability of complete agreement
would be $2^{1-n}$. In the present problem all signs share the same
parameter, and their dependence can make agreement much less
frequent. Thus even the exponential scale of $p_n$ requires an
understanding of the joint distribution. For a fixed $n$, one can
also ask whether the infimum is attained and which arithmetic
configurations realize it.

There is also a geometric interpretation. With $t=x/(2\pi)$, regard
$a_1t,\ldots,a_nt$ modulo $1$ as the positions of runners on a circular
track. Then $P(A)$ is the proportion of time for which all runners
lie in the fixed semicircle $(-1/4,1/4)$ or all lie in its complement.
This interpretation was noted by \citet{DGLLP}. Its broader geometric
setting has a longer history in the Diophantine approximation and
view-obstruction problems of \citet{Wills} and \citet{Cusick}, which
gave rise to the lonely-runner conjecture.

Several subsequent developments in that literature concern themes
relevant here. \citet{GW} investigated tight sets of runner speeds;
\citet{PerarnauSerra} used correlations between the times at which
runners lie close to the origin to improve bounds for simultaneous
separation; and \citet{Kravitz} studied possible values of maximum
loneliness and proposed a discrete spectrum near the conjectured
minimum. These works illustrate the roles of arithmetic dependence
and extremal configurations in restrictions on a torus orbit.
Maximum loneliness optimizes a minimum circular distance over time.
The quantity $P(A)$ measures the time spent in a fixed union of two
cubes. Its extremal problem therefore requires uniform control of
this time average as the integer direction of the orbit varies.

The two-frequency calculation of \citet*{GOS} gives $p_2=1/3$,
with equality precisely for positive integer dilates of $\{1,3\}$.
The next exact case was settled by \citet*{DGLLP}, whose work first
circulated in 2022 and was published in 2024. They proved $p_3=1/9$
and identified every minimizer as a positive integer dilate of
$\{1,3,9\}$. Thus the three-frequency result determined both the
sharp probability and the arithmetic structure of all equality cases.

Their proof also provided a way to reduce an unbounded frequency
search to a finite one. After removing the common divisor, Fourier
formulas for pair correlations and arithmetic reductions force any
triple $a<b<c$ with $P(a,b,c)\le1/9$ to have all three entries odd
and $a=1$. A scale-separation estimate shows that a third frequency
much larger than the first two contributes asymptotically a factor
$1/2$ to the agreement probability. Combining this estimate with the
pair correlation bounds yields $b\le7$ and $c\le84$, after which a
finite computation completes the proof. The key issue is thus the
relative scales and reduced ratios of the frequencies: a common
dilation leaves the probability unchanged.

As noted by \citet{DGLLP}, the restriction of extremizers to
controlled scales has an analogue in the compactness principle of
\citet{SteinerbergerCompactness}. For smooth mean-zero functions on
the two-dimensional torus, that principle bounds the length of a
closed geodesic maximizing the absolute average of the function.
In the sign problem, the discontinuous indicators call for direct
correlation and interval estimates. The three-frequency argument
shows how these estimates can supply the quantitative restrictions
needed for an exact extremal classification.

The same work of \citet{DGLLP} also identified the change in behavior
at four frequencies. Numerical exploration followed by exact
evaluation of candidate configurations gave
\[
P(1,3,11,33)=\frac1{33}<\frac1{27}=P(1,3,9,27)
\]
and led to the conjecture that the positive integer dilates of
$\{1,3,11,33\}$ are the only minimizers for four frequencies.
Thus the geometric
progression with ratio $3$, which is optimal for two and three
frequencies, already ceases to be optimal at the next step. Four
frequencies are also the first case in which the common-sign
probability involves a fourth-order correlation in addition to the
pair correlations. Our first result resolves the conjecture and
classifies the extremal configurations in this first higher-order
case.

\begin{theorem}\label{thm:main}
Let $a_1,a_2,a_3,a_4$ be distinct positive integers. Then
\[
P(a_1,a_2,a_3,a_4)\ge\frac1{33}.
\]
Equality holds if and only if
\[
\{a_1,a_2,a_3,a_4\}=d\{1,3,11,33\}
\quad\text{for some }d\in\N.
\]
In particular, $p_4=1/33$.
\end{theorem}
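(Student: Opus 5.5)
We will follow the strategy of \citet{DGLLP} for three frequencies. The first step is to reduce the unbounded search to a finite computation, and the last step is to evaluate the finitely many remaining candidates exactly. We begin with the standard reductions. $P$ is invariant under common dilation, so we may assume $\gcd(a_1,\dots,a_4)=1$ and $a_1<a_2<a_3<a_4$. With $s_i(x)=\sgn\cos(a_ix)$, we have $\one\{\text{all equal}\}=\frac1{8}\bigl(\prod_i(1+s_i)+\prod_i(1-s_i)\bigr)$. Since odd products integrate to zero by the symmetry $x\mapsto x+\pi$ when all frequencies are odd (and more generally by symmetry considerations), this gives
\[
P(A)=\frac18\Bigl(1+\sum_{i<j}\E[s_is_j]+\E[s_1s_2s_3s_4]\Bigr).
\]
The pair correlations have the closed form of \citet{GOS}. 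Writing $a_i/a_j$ in lowest terms as $p/q$, the correlation $\E[s_is_j]$ vanishes unless $p,q$ are both odd, in which case it equals $(-1)^{(p-q)/2}/(pq)$. The fourth-order term $\E[s_1s_2s_3s_4]$ can be expanded through the Fourier series of the square wave, $\frac4\pi\sum_{k\ \mathrm{odd}}(-1)^{(k-1)/2}\cos(kx)/k$. This produces a sum over odd $k_i$ with $\sum\pm k_ia_i=0$, which can be evaluated via Bernoulli-polynomial (piecewise linear) integrals over cyclic intervals.

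Next comes the parity case split. If both odd and even frequencies occur, we aim to prove the stronger bound $P\ge 1/24>1/33$ announced in the abstract. In this case, mixed-parity pair correlations vanish, and the agreement event factors partially under $x\mapsto x+\pi$. We then bound the remaining terms using the $p_2=1/3$ and $p_3=1/9$ results. The expectation is that the optimum reduces to a three-frequency subproblem, namely the sub-configurations $\{1,3,4,12\}$ and $\{2,4,5,15\}$. So the main case is when all $a_i$ are odd. Here, because every pair correlation is at least $-1/(pq)$ and the quadruple term is bounded in absolute value by a pair-type quantity, $P\le 1/33$ forces strong arithmetic constraints. The target is to show $a_1=1$ and that $a_2$ is small, arguing as in \citet{DGLLP}: a large reduced ratio makes the corresponding correlation negligible and pushes $P$ toward the independent value.

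The key quantitative step, and the main obstacle, is the scale-separation estimate. When $a_4$ is much larger than $a_3$, or $a_3$ much larger than $a_2$, one needs $P(A)\approx\frac12\bigl(P(\text{smaller set})\pm\text{error}\bigr)$ with an explicit error of order $a_{j}/a_{j+1}$. To prove it, we would partition $[0,2\pi)$ into intervals on which the lower-frequency signs are constant. On each such interval, the fast cosine spends a proportion $1/2+O(1/(\text{number of periods}))$ positive. This gives, for instance, $P(a_1,\dots,a_4)\ge\frac12P(a_1,a_2,a_3)-Ca_3/a_4$. Combining this with $P(a_1,a_2,a_3)\ge 1/9$ forces $a_4\le Ca_3$. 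For the middle scales, we would instead use the two-frequency bound: for the pair with the largest gap, $P\ge\frac14P(a_1,a_2)-\dots$, and since $\frac14\cdot\frac13>\frac1{33}$, the gaps are bounded explicitly. Getting the constants sharp enough that the resulting box (say $a_2\le 15$, $a_3\le$ a few hundred, $a_4\le$ a few thousand) is small requires care. Here we would first try to refine the error term by subtracting the exact contribution near sign changes of the slower cosines.

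Finally, within the resulting finite box, we would compute $P$ exactly as a rational number by listing the zeros of the cosines in $[0,2\pi)$, a sorting problem of size $O(\sum a_i)$. This identifies the minimum $1/33$, attained only at $\{1,3,11,33\}$, and rescaling by $d$ gives the equality statement.
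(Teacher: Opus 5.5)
Your outline does not reach a finite computation, because each of its load-bearing steps has a genuine gap.

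\textbf{The fourth-order term.} You assert that $Q=\E[s_1s_2s_3s_4]$ is bounded by a pair-type quantity, but you give no such bound. Without a uniform \emph{lower} bound on $Q$, the hypothesis $P\le1/33$ constrains nothing: from $8P=1+S+Q$ and only $Q\ge-1$ you get $S\le 8/33$. The missing idea is Proposition~\ref{thm:Q}. For odd frequencies and $F(x)=\prod_j s(a_jx)$, the identity $s(ax)s(a(x+\pi/2))=s(a(2x+\pi/2))$ gives $F(x)F(x+\pi/2)=F(2x+\pi/2)$. Hence $0\le\E[(1+F(x))(1+F(x+\pi/2))]=1+3Q$, so $Q\ge-1/3$ and $S\le-14/33$. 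This is what forces a $3$-pair, after the single exceptional evaluation $P(1,7,11,77)=5/77$. Even this bound is vacuous for two disjoint $3$-pairs $\{g,3g,h,3h\}$ with $gh$ large. There $S=-2/3+O(1/(gh))$, which yields only $P\ge0$. When $g\approx h$ this infinite family also has no scale gap, so it must be evaluated exactly, as in the Bernoulli formula of Proposition~\ref{prop:doubletriple}. That formula is also where the extremizer $\{1,3,11,33\}$ actually comes from.

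\textbf{Scale separation.} This step cannot produce your box. The single-frequency estimate $P(T,c)\ge\frac12P(T)-N(T)/(4c)$ is sound; it is Lemma~\ref{lem:component}. Your middle-scale estimate, however, is false: $P\ge\frac14P(a_1,a_2)-\text{error}$ with error $\to0$ as $a_3/a_2\to\infty$. For $A=\{1,3,N,3N\}$ with $N\ge5$ odd, $P(A)\to1/18<1/12=\frac14P(1,3)$. When $a_3,a_4$ are both large, the limiting factor depends on their reduced ratio. It also depends on the beat frequencies $|ma_3-na_4|$, which need not be large. So no error term in $a_2/a_3$ alone can exist. The paper proceeds differently:
\begin{itemize}
\item it removes common factors of three frequencies exactly, via the transfer identity of Proposition~\ref{prop:transfer};
\item it uses the correlation constraint to bound the reduced ratios and the common scale arithmetically;
\item only then does it apply the component estimate, to the one remaining large frequency.
\end{itemize}

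\textbf{Mixed parity.} Your appeal to $p_3$ only handles the $1+3$ split, where $P=\frac12P(T)\ge1/18$. With two odd and two even frequencies, $P=\frac12\Pr(G_O\cap G_E)$, where $G_O,G_E$ are the common-sign sets of the two pairs. This intersection is not controlled by any two- or three-frequency bound. The paper needs the conditional correlation estimate $\rho\ge-1/2$ of Proposition~\ref{lem:sampled-half}, together with the $7$-pair lemmas. Without some substitute, you do not obtain even the weaker bound $P>1/33$ that Theorem~\ref{thm:main} requires in this case.
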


Parity distinguishes two different common-sign geometries. For odd
frequencies, translation by half a period reverses every sign and
exchanges the common positive and common negative sets. When both
parities occur, the same translation reverses only the odd signs,
so agreement within the two parity classes must also be coordinated.
The sharp constant under this restriction records the effect of
that change in symmetry.

% Keep the introductory sentence with the theorem.
\Needspace{9\baselineskip}
We also determine the minimum when both parities occur.

\begin{theorem}\label{thm:mixed}
Let $a_1,a_2,a_3,a_4$ be distinct positive integers, at least one odd
and at least one even. Then
\[
P(a_1,a_2,a_3,a_4)\ge\frac1{24}.
\]
Equality holds if and only if
\[
\{a_1,a_2,a_3,a_4\}=d\{1,3,4,12\}
\quad\text{or}\quad d\{2,4,5,15\}
\]
for an odd positive integer $d$.
\end{theorem}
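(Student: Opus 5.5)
We prove Theorem~\ref{thm:mixed} in the same way as the three-frequency argument of \citet{DGLLP}: first use Fourier analysis to confine the frequencies to a finite set, then compute exactly on what remains.

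\emph{Fourier expansion.} Set $t=x/(2\pi)$ and let $s_a(t)=\sgn\cos(2\pi at)$. This is a square wave with expansion $s_a(t)=\frac4\pi\sum_{k\ge0}\frac{(-1)^k}{2k+1}\cos(2\pi(2k+1)at)$. Put $\one_\pm=\prod_i\frac{1\pm s_{a_i}}2$. Then
\[
P(A)=\frac18\Bigl(1+\sum_{i<j}\E[s_{a_i}s_{a_j}]+\E[s_{a_1}s_{a_2}s_{a_3}s_{a_4}]\Bigr),
\]
because the odd-order terms cancel between $\one_+$ and $\one_-$.

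\emph{Pair correlations.} Write $a/\gcd(a,b)=a'$ and $b/\gcd(a,b)=b'$. If $a'$ and $b'$ are both odd, then $\E[s_as_b]=(-1)^{(a'-b')/2}/(a'b')$ up to sign conventions; if one of them is even, then $\E[s_as_b]=0$. This is the known formula behind $p_2=1/3$.

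\emph{Parity reduction.} Split the frequencies into the odd class $O$ and the even class $E$, both nonempty by hypothesis. If every frequency is divisible by the same power of $2$ beyond the odd part, divide it out. Two frequencies with different $2$-adic valuations have zero pair correlation. So the only nonzero pair terms occur inside blocks of equal $2$-adic valuation. The fourth-order term $\E\prod s$ I would bound by writing it as a Bernoulli-polynomial (sawtooth) sum over solutions of $\sum\pm(2k_i+1)a_i=0$. This term vanishes unless the signed odd multiples can cancel, and that requires an even number of frequencies in each valuation class modulo the relevant constraints. The candidate extremizers $\{1,3,4,12\}$ and $\{2,4,5,15\}$ have the structure of two pairs $\{u,3u\}$ and $\{v,3v\}$ with $\E[s_us_{3u}]=-1/3$ and a compensating fourth-order term. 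I would verify by direct computation that $P=1/24$ in both cases.

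\emph{Bounding configurations.} Guided by the case analysis over valuation patterns (sizes $1+3$, $2+2$, $1+1+2$, and so on), I would show the following:
\begin{itemize}[leftmargin=*]
\item If some block is a singleton whose valuation occurs nowhere else, it decouples. A scale-separation or independence argument then gives a factor close to $1/2$, so $P\ge\frac12 p_3$-type bound, which is at least $1/18>1/24$ up to error terms.
\item In the genuinely coupled cases, the sum of pair correlations is at least $-1/3-1/3$, and a lower bound on the fourth-order term then forces the reduced ratios to be small.
\end{itemize}

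\emph{Finite search and main obstacle.} Quantitative forms of the scale-separation estimate from the three-frequency proof bound the reduced ratios. Concretely, if $a_4\gg a_3$ then $P(A)\approx\frac12P(a_1,a_2,a_3)$ or something similar with parity corrections. This reduces the problem to finitely many reduced configurations, which are checked by exact rational computation: $P$ is piecewise linear in the interval structure, so it is computable exactly. Equality then forces the two listed sets. The dilation factor $d$ must be odd, since an even $d$ changes which frequencies are odd. The hardest step will be a usable, explicit lower bound for the fourth-order correlation together with the scale-separation error. To get these I would first try expressing $\E\prod s$ as an explicit interval-overlap computation over cyclic intervals of length $\frac1{2a_i}$, bounding the error by $O(a_3/a_4)$ as in \citet{DGLLP}.
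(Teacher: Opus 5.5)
Your reduction rests on two claims that are false, and the extremizer $\{2,4,5,15\}$ refutes both. Its $2$-adic classes are $\{5,15\}$, $\{2\}$ and $\{4\}$, and every pair correlation except $C(5,15)=-1/3$ vanishes. So $8P=1+S+Q=1/3$ forces $Q=\E[s(2x)s(4x)s(5x)s(15x)]=-1/3\ne0$, even though two classes have odd size. The $2$-adic obstruction concerns only the class of lowest valuation, and carries let the higher classes cancel. Nor do singleton classes decouple: $P(2,4,5,15)=1/24$, whereas your heuristic predicts about $\frac12P(4,5,15)=\frac1{12}$. This set is also not a union of two $3$-pairs: $\{2,4\}$ has ratio $2$, and equality comes from $Q=-1/3$, not from two correlations equal to $-1/3$. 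The only general exact decoupling is the translation $x\mapsto x+\pi$, which reverses the odd signs and fixes the even ones. It gives $P=\frac12\Pr(G_O\cap G_E)$, hence $P=\frac12P(T)\ge\frac1{18}$ when one parity class is a singleton. It says nothing about how the two pairs interact in the $2+2$ case.

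In the $2+2$ case your finite-search architecture cannot work. All four cross-parity correlations vanish. The even pair has correlation $0$ whenever its members have different $2$-adic valuations. So for $(a,3a,2b,2c)$ with $C(b,c)=0$ one has $S=-1/3$, and $P\ge1/24$ is \emph{equivalent} to $Q\ge-1/3$, with equality at $\{2,4,5,15\}$. A non-sharp lower bound for $Q$ therefore excludes nothing, while a sharp one already is the theorem in this case; no bound on $Q$ can ``force the reduced ratios to be small''. Scale separation does not close this either. Its error is of order $N(T)/a_4$, with $N(T)$ growing with the other three frequencies. It only bounds the largest frequency relative to the rest, and says nothing about families such as $(1,3,2b,2b+2)$, $b\to\infty$, where no frequency dominates and $S=-1/3$.

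The missing ingredient is a uniform sharp estimate over infinitely many configurations. The paper conditions on the odd $3$-pair to get the exact identity $P(a,3a,2b,2c)=(1+\rho)/12$. Here $\rho$ is the average, over the central arc $|\tau|<\pi/3$, of the correlation of $s(b\,\cdot)s(c\,\cdot)$ sampled on the $a$-point fibres of $y\mapsto ay$. The paper proves $\rho\ge-1/2$, with equality only for $(a,\{b,c\})=(1,\{2,6\}),(5,\{1,2\})$. It uses an integer initial-interval estimate when $a=1$, and discrete half-circle and grid-correlation bounds in $\Z/a\Z$ when $a\ge3$. It also needs the analogous conditioning on an even $3$-pair. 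When neither pair has ratio $3$, it uses the doubling bound $\E[FG]\ge-(1+\eta\mathcal A)/2$, where $F,G$ are the odd and even sign products, $\mathcal A$ is the odd-pair correlation, and $G(x+\pi/2)=\eta G(x)$. It then treats the odd and even $7$-pair configurations separately. You flag the fourth-order term as the hardest step, but the tool you propose (interval overlaps with an $O(a_3/a_4)$ error) cannot supply any of this.
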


The failure of the powers of $3$ to minimize $P$ at four frequencies
also suggests that their decay rate need not describe the problem
for large $n$. The five-frequency candidate
$\{1,3,11,35,105\}$ reported by \citet{DGLLP} gives the further upper
bound $p_5\le1/105$. For arbitrary $n$, they proved
$p_n\le P(1,3,\ldots,3^{n-1})=3^{1-n}$,
left open whether $p_n>0$ for every $n$, and suggested decay faster than
$3^{-n}$. Becker obtained $p_n\ge4^{-n}$ by a positive definite
minorant argument, as recorded in the IMPA lectures of
\citet[pp.~120--124]{Steinerberger}. This leaves the question of
whether the scale $4^{-n}$ can be approached by actual frequency
sets. We determine the exponential rate and bound the remaining
factor by a power of $n$.

\begin{theorem}\label{thm:rate}
Let $\theta=\log_2(3/2)$. For every $n\in\N$,
\[
4^{1-n}\le p_n\le16n^\theta4^{-n}.
\]
In particular,
\[
\lim_{n\to\infty}p_n^{1/n}=\frac14,
\qquad
\log p_n=-n\log4+O(\log n).
\]
\end{theorem}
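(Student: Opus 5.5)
The plan has two independent halves: a lower bound by a positive-definite minorant, sharpening Becker's $4^{-n}$ argument, and an upper bound by an explicit frequency family whose common-sign probability we compute or estimate through Fourier series of square waves. The limit statements then follow at once, since $4^{1-n}$ and $16n^\theta4^{-n}$ both have $n$-th root tending to $1/4$ and logarithm $-n\log4+O(\log n)$.

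\emph{Lower bound.} On $\R/2\pi\Z$ let $\alpha$ and $\beta$ be the indicators of the arcs of length $\pi/2$ centred at $0$ and at $\pi$. Put $\psi_\pm=\alpha\pm\beta$, and let $u$ and $v$ be the autocorrelations of $\psi_+$ and $\psi_-$, normalised so that their peaks equal $1$. Then $u$ and $v$ have nonnegative Fourier coefficients. Moreover $\phi_+=(u+v)/2$ is the triangle of half-width $\pi/2$ centred at $0$, and $\phi_-=(u-v)/2$ is the same triangle centred at $\pi$. Hence $\phi_+\le\one_{\{\cos>0\}}$ and $\phi_-\le\one_{\{\cos<0\}}$. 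It follows that
\[
\one_{\{\text{common sign}\}}(x)\ \ge\ \prod_j\phi_+(a_jx)+\prod_j\phi_-(a_jx)
=2^{1-n}\sum_{|S|\ \mathrm{even}}\ \prod_{j\in S}v(a_jx)\prod_{j\notin S}u(a_jx).
\]
Every summand has nonnegative Fourier coefficients. For such functions the mean of a product is at least the product of the means, because the zero coefficient of a product collects only nonnegative terms.

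The term $S=\emptyset$ alone already gives $2^{1-n}\prod_j\widehat u(0)=2^{1-n}2^{-n}=2\cdot4^{-n}$. The main obstacle is the remaining factor $2$. I would try to recover it from the terms with $|S|=2$, whose means are $2^{1-n}\sum_k\widehat v(ka_i)\widehat v(-ka_j)\cdots$, or from the nonzero resonances in $\prod u(a_jx)$ (note that $u$ is $\pi$-periodic with mean $1/2$). Failing that, I would optimise the arc length in $\psi_\pm$ jointly across the two halves $\phi_\pm$.

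\emph{Upper bound.} Two obvious routes are too weak, so the construction must be more structured. Scale separation alone gives only $P(A\cup MB)\to P_+(A)P_+(B)+P_-(A)P_-(B)$ as $M\to\infty$. A squaring recursion of the form $f(2n)\lesssim f(n)^2$ is incompatible with the target $f(n)=Cn^\theta4^{-n}$, because the polynomial factor would square. I would therefore build the family one frequency at a time, with ratios close to $4$ and a sign pattern chosen to create anticorrelation. My first candidate is $a_j=(4^j-(-1)^j)/5$-type numbers, adjusted by the odd-frequency requirement, under which each new runner cuts the common-sign set by a factor of about $1/4$.

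I would then evaluate $P(A)$ exactly. Write it as $2^{-n}\sum_{|S|\ \mathrm{even}}\E\prod_{j\in S}s(a_jx)$, where $s=\sgn\cos$, and evaluate the correlations through the Bernoulli-polynomial formulas for products of square waves with integer frequency ratios. The error relative to $4^{1-n}$ should be governed by a digit-counting sum over binary expansions of $j\le n$. Such a sum is bounded by $3^{\lceil\log_2 n\rceil}\le 3n^\theta$, since each binary level contributes a factor at most $3/2$ relative to $2$. This should produce the constant $16$.

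Identifying the right family and proving this $(3/2)$-per-level bookkeeping is the second main obstacle. I would first test candidate families numerically against the listed values $1/3$, $1/9$, $1/33$ and $1/105$ before committing.
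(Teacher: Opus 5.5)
Both halves of your proposal have genuine gaps, and the lower-bound gap cannot be closed along the route you suggest.

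\emph{Lower bound.} In the variable $t=x/(2\pi)$, your minorant is exactly $|D|^{-1}(\one_D*\one_{-D})(at)$, where $D=C\cup(C+\tfrac12\mathbf1)$ and $C=(-1/8,1/8)^n\subset(\R/\Z)^n$. So positive-definiteness certifies only $|D|=2\cdot4^{-n}$, half of $4^{1-n}$.

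No continuous positive-definite minorant on the torus can do better. Let $V=D-D=(-1/4,1/4)^n\cup(\tfrac12\mathbf1+(-1/4,1/4)^n)$ be the common-sign region, and let $f\le\one_V$ be continuous and positive definite. Choose $\Lambda$ to be a transversal of the cosets of $\{0,\tfrac12\mathbf1\}$ in $\frac14\Z^n/\Z^n$, so $|\Lambda|=4^n/2$. Differences of distinct elements of $\Lambda$ avoid $V$. Hence $\widehat f(0)|\Lambda|^2\le\sum_{\lambda,\lambda'}f(\lambda-\lambda')\le|\Lambda|$, that is, $\widehat f(0)\le2\cdot4^{-n}$. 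Now take $a=(1,M,\dots,M^{n-1})$ with $M\to\infty$. The orbit average $\sum_{k\cdot a=0}\widehat f(k)$ tends to $\widehat f(0)$, because every nonzero $k$ in the annihilator has $\max_j|k_j|\ge M/2$. So neither the $|S|=2$ resonances nor re-optimised arcs can give a uniform gain.

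The missing factor $2$ comes from a sumset doubling on the connected group $\R/\Z$, and this is exactly how the paper's proof gets it. By Fubini, some translated section $E_z=\{t:z+at\in D\}$ has $|E_z|\ge|D|$. Since $E_z-E_z\subset\{t:at\in V\}$, the circle case of Kneser's inequality, $|E-E|\ge\min\{2|E|,1\}$, gives $P(A)\ge2|D|=4^{1-n}$. The trivial bound $|E_z-E_z|\ge|E_z|$ reproduces precisely your $2\cdot4^{-n}$.

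\emph{Upper bound.} You give no construction, and the one candidate you name fails at the third step. We have $P(1,3,13)=7/39$: the common positive set in $[0,1/2]$ is $(0,\tfrac1{52})\cup(\tfrac3{52},\tfrac1{12})$. So the new frequency keeps $7/13$ of the common-sign set of $\{1,3\}$, not about $1/4$. An exact evaluation through the $2^{n-1}$ even-order correlations would also need cancellation down to size $4^{-n}$, so it is not a workable certificate.

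What is missing is a way to add a whole block of frequencies whose effect is computed exactly.
\begin{itemize}
\item Let $Q_m=(2\cdot4^m+1)/3$. A base-four digit argument shows that for $j\not\equiv0\pmod{Q_m}$, some coordinate of $j(1,4,\dots,4^{m-1})/Q_m$ lies at distance at least $Q_{m-1}/Q_m>1/4$ from $\Z$.
\item Hence the odd frequencies $D_m=\{1\}\cup\{Q_m-4^i:1\le i<m\}$, which have the same cosines as $1,4,\dots,4^{m-1}$ on the grid $Q_m^{-1}\Z$, are simultaneously positive there only at $j\equiv0$.
\item Let $B$ consist of odd frequencies with $B_+=(-1/X,1/X)+\Z$ and $X\ge4\max D_m$. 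Then $(Q_mB)_+$ consists of arcs of radius $1/(Q_mX)$ about the grid points, and no cosine from $D_m$ changes sign on them.
\item Thus $(D_m\cup Q_mB)_+$ is the single central arc. Adding $m$ frequencies divides $P$ by exactly $Q_m\ge\frac23\,4^m$.
\end{itemize}
This recursion $P\mapsto P/Q_m$ is multiplicative, not a squaring, so the prefactor does not compound. Start from $\{1,3,9\}$ and take $m$ maximal at each step; this roughly doubles the cardinality each time. So $\log_2n+O(1)$ blocks suffice, and the accumulated loss $(3/2)^{\log_2n+O(1)}$ is the factor $n^\theta$.
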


The theorem shows that arithmetic dependence can make simultaneous
agreement exponentially rarer than the independent-sign benchmark
$2^{1-n}$. It also separates two aspects of the extremal problem:
the four-frequency theorems determine sharp constants and every
equality configuration, while Theorem~\ref{thm:rate} determines the
exponential scale for arbitrarily many frequencies. The remaining
gap is polynomial; these estimates do not determine the precise
factor multiplying $4^{-n}$.

The bounds for general $n$ follow from a difference-set argument and
an iterative construction. The four-frequency results also require
arithmetic estimates for pair correlations and conditional averages.

\subsection{The exponential rate}

The positive definite minorant approach connects the uniform
positivity question with classical extremal problems in harmonic
analysis. A continuous positive definite function on a torus has
nonnegative Fourier coefficients, so its average along an
integer-frequency orbit through the identity is at least its Haar
average. A related support-constrained problem is Tur\'an's extremal
problem, which maximizes the integral of a positive definite function
supported in a prescribed region and normalized at the identity; see
\citet{KR}. In our lower bound, the needed uniformity comes from
the difference-set structure of the common-sign region and the
classical sumset inequality on compact connected abelian groups. The torus
case was established by \citet{Macbeath}, and the general framework
is due to \citet{Kneser}; see also \citet{Tao}.

For the lower bound, view the common-sign event as the inverse image of
two cubes under the homomorphism
\[
t\longmapsto(a_1t,\ldots,a_nt)\in(\R/\Z)^n.
\]
Their union is $D-D$, where $D$ is a union of two smaller cubes of total
measure $2\cdot4^{-n}$. By averaging, some translated section of $D$ has
at least that measure. Taking differences returns to a section through
the origin, and the circle sumset inequality doubles its measure.
This gives $P(A)\ge4^{1-n}$ without additional arithmetic assumptions
on the frequencies. The argument uses the size of a translated
section even when the original section of $D$ is small, and so avoids
any dependence on the largest frequency.

For the upper bound, we use the cyclic construction of Polak and
Schrijver recorded in \citet[Theorem~9.3.3]{Polak}. Its separation
property allows us to add a block of $m$ odd frequencies and retain
just one of the arcs produced by dilation. The probability is divided by
\[
Q_m=\frac{2\cdot4^m+1}{3},
\]
at a cost of at most $3/2$ relative to the factor $4^{-m}$.
The phase points $j(1,4,\ldots,4^{m-1})/Q_m$, for
$j\in\Z/Q_m\Z$, are pairwise separated by more than $1/4$ in
circular distance in at least one coordinate. Thus the axis-parallel
cubes of side length $1/4$
centered at these points are disjoint. Their number $Q_m$ is within
a constant factor of the volume bound $4^m$, which explains the
scale relevant to our construction.

Such cyclic separation is also used to construct independent sets
in strong powers of circular graphs in the study of Shannon
capacity \citep{PS}. The lattice constructions of \citet*{BPZ}
include the theorem of Polak and Schrijver used here. For our
problem, the additional step is to preserve the discrete separation
when grid points are replaced by intervals of a suitable width.
We give a base-four proof of the separation property, pass from the
cyclic grid to intervals of the circle, and iterate with increasing
block lengths. The number of iterations is at most $\log_2 n+O(1)$,
which accounts for the exponent $\theta=\log_2(3/2)$.

\Needspace{10\baselineskip}
\subsection{The four-frequency classification}

Write
\[
s(x)=\sgn\cos x,\qquad
\E f=\frac1{2\pi}\int_0^{2\pi}f(x)\,dx,
\]
and let $S(A)$ be the sum of the pair correlations and $Q(A)$ the
expectation of the product of all signs. The indicator expansion gives
\[
4P(a,b,c)=1+S(a,b,c),\qquad
8P(a,b,c,d)=1+S(a,b,c,d)+Q(a,b,c,d).
\]
Thus the passage from three to four frequencies introduces a
fourth-order term that is not determined by the pair correlations.
For odd frequencies, a multiplicative identity under translation and
doubling yields $Q(A)\ge-1/3$. Consequently,
\[
P(A)\le\frac1{33}
\quad\Longrightarrow\quad S(A)\le-\frac{14}{33}.
\]
The pair correlation formula expresses each negative summand as
$-1/(uv)$, where $u:v$ is the reduced frequency ratio. Under the
hypothesis $P(A)\le1/33$, this constraint, together with one exceptional
interval calculation, forces a pair of the form $\{a,3a\}$, which we
call a \emph{$3$-pair}.

Two disjoint $3$-pairs can be evaluated explicitly and give the
extremizer $\{1,3,11,33\}$. For a single $3$-pair, the correlation
inequality bounds the reduced ratios and their common scale. When one
frequency remains large, we use the component estimate
\[
P(T,c)\ge\frac12P(T)-\frac{N(T)}{4c}
\]
where $T$ is a triple and $N(T)$ counts its common-sign arcs.
Together, these estimates reduce the
remaining configurations to a finite list, which is excluded by
explicit common positive intervals.

In mixed parity, translation by $\pi$ reverses the odd signs and
preserves the even signs. If one parity occurs only once, the
three-frequency theorem gives $P\ge1/18$. We are therefore left with
two odd and two even frequencies. In the presence of an odd $3$-pair,
conditioning on its common-sign set gives
\[
P=\frac{1+\rho}{12},
\]
where $\rho$ is a two-sign correlation averaged over a cyclic grid and
a central interval. The sharp estimate $\rho\ge-1/2$, including its
equality cases, produces the two extremizers in
Theorem~\ref{thm:mixed}. The other configurations are treated using
correlations on cyclic groups and the short-ratio estimates.

\subsection{Organization}

Section~\ref{sec:many} proves Theorem~\ref{thm:rate} independently of
the four-frequency analysis. Section~\ref{sec:basic} develops the
correlation identities and reductions used for four frequencies.
Section~\ref{sec:bernoulli} gives the short-ratio formulas used in the
odd-frequency classification in Section~\ref{sec:odd}.
Section~\ref{sec:auxiliary} establishes the conditional correlation
estimate and its supporting interval and grid bounds;
Section~\ref{sec:mixed} applies them to mixed parity.
Section~\ref{sec:final} completes the proofs of the two
four-frequency theorems. The interval calculations and the arithmetic
reduction for a single $3$-pair are recorded in
Appendices~\ref{sec:odd-certificates} and~\ref{sec:odd-arithmetic}.

\section{The exponential rate}\label{sec:many}

We use both the angular variable $x\in\R/(2\pi\Z)$
and the normalized variable $t=x/(2\pi)\in\R/\Z$; $|\cdot|$ denotes
Lebesgue measure in the indicated variable, and cardinality for finite
sets. All identities between sign functions and sets are understood
up to null sets.

Integer dilation preserves normalized Lebesgue measure on the circle, so
\begin{equation}\label{eq:dilation}
P(dA)=P(A), \quad d\in\N.
\end{equation}
For a nonempty finite set $A$, write
\[
A_+=\{t\in\R/\Z:\cos(2\pi at)>0\text{ for all }a\in A\},
\]
and define $A_-$ by reversing the inequality. If all frequencies are odd,
translation by $1/2$ exchanges these sets, while reflection preserves
each of them. Hence
\begin{equation}\label{eq:odd-symmetry}
A_-=A_++\frac12,\qquad
P(A)=2|A_+|=4|A_+\cap[0,1/2]|.
\end{equation}
The interval $(-1/(4\max A),1/(4\max A))$ lies in $A_+$; in particular,
\begin{equation}\label{eq:initial-arc}
P(A)\ge\frac1{\max A},\quad A\subset2\N-1.
\end{equation}

\subsection{The lower bound}

All tori in this section carry normalized Haar measure.

There is a translated section of $D$ with measure at least $|D|$. Taking its
difference set removes the translation, so the circle sumset inequality
gives a lower bound for the section of $D-D$ through the origin.

\begin{lemma}\label{lem:slice-difference}
Let $D,V$ be open subsets of $(\R/\Z)^n$ with $D-D\subset V$, and let
$a\in\Z^n$. Then
\[
\bigl|\{t\in\R/\Z:at\in V\}\bigr|
\ge\min\{2|D|,1\}.
\]
\end{lemma}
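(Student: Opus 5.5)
The plan is to use the homomorphism $\phi\colon\R/\Z\to(\R/\Z)^n$, $\phi(t)=at$, and to study sections of $D$ along translates of the one-parameter subgroup $H=\phi(\R/\Z)$. If $a=0$, then $H=\{0\}$. Since $D-D\ni0$ whenever $D\neq\emptyset$, we get $0\in V$, so the section is all of $\R/\Z$. (If $D=\emptyset$ the bound is trivial.) From now on take $a\ne0$.

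For $y\in(\R/\Z)^n$, set
\[
E_y=\{t\in\R/\Z:\ y+at\in D\},
\]
which is an open subset of the circle.

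\emph{Step 1: averaging.} By Fubini, and because $(y,t)\mapsto y+at$ preserves Haar measure for each fixed $t$,
\[
\int_{(\R/\Z)^n}|E_y|\,dy=\int_0^1|\{y:\ y+at\in D\}|\,dt=|D|.
\]
Hence there is some $y$ with $|E_y|\ge|D|$.

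\emph{Step 2: differences remove the translation.} If $s,t\in E_y$, then
\[
a(s-t)=(y+as)-(y+at)\in D-D\subset V.
\]
Therefore $E_y-E_y\subset\{t:\ at\in V\}$.

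\emph{Step 3: sumset inequality on the circle.} Apply Macbeath's inequality (Kneser's theorem for $\R/\Z$), recorded in the introduction: for nonempty open (or measurable, with inner measure) $X,Y\subset\R/\Z$,
\[
|X+Y|\ge\min\{|X|+|Y|,1\}.
\]
Take $X=E_y$ and $Y=-E_y$, noting $|-E_y|=|E_y|$. This gives
\[
|E_y-E_y|\ge\min\{2|E_y|,1\}\ge\min\{2|D|,1\}.
\]
Combined with Step 2, this is the claim.

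\emph{Main obstacle.} The delicate point is measurability and nonemptiness: we must make sure the chosen $y$ has $E_y$ nonempty and open, so that the sumset inequality applies. Openness is automatic, because $D$ is open and $t\mapsto y+at$ is continuous. Nonemptiness follows from $|E_y|\ge|D|>0$ when $D\ne\emptyset$.

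A second point is that the averaging must produce a genuine maximizer rather than only a near-maximizer. To avoid measurability issues with the supremum, I would pick any $y$ with $|E_y|\ge|D|-\varepsilon$ and let $\varepsilon\to0$ at the end. The bound $\min\{2|D|-2\varepsilon,1\}$ holds for all $\varepsilon>0$, and the left-hand side does not depend on $\varepsilon$. Since $y\mapsto|E_y|$ is measurable by Fubini, a $y$ with $|E_y|\ge|D|$ in fact exists directly.
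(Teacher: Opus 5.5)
Your proposal is correct and follows the paper's proof: you average over translates via Fubini to find a section $E_y$ with $|E_y|\ge|D|$, you observe that $E_y-E_y\subset\{t:at\in V\}$, and you apply the circle case of the Kneser--Macbeath sumset inequality. Your separate treatment of $a=0$ and your measurability remarks are harmless additions; the paper only notes that the open-set version follows by inner regularity.
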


\begin{proof}
For $z\in(\R/\Z)^n$, let
$E_z=\{t\in\R/\Z:z+at\in D\}$. By Fubini's theorem,
\[
\int_{(\R/\Z)^n}|E_z|\,dz=|D|.
\]
Choose $z$ with $|E_z|\ge|D|$. Since
\[
E_z-E_z\subset\{t:at\in D-D\}\subset\{t:at\in V\},
\]
the circle case of Kneser's sumset inequality
\citep[eq.~(1)]{Tao} gives
\[
\bigl|\{t:at\in V\}\bigr|
\ge |E_z-E_z|
\ge \min\{2|E_z|,1\}
\ge \min\{2|D|,1\}.
\]
Here the open-set version follows by inner regularity.
\end{proof}

\begin{proof}[Proof of the lower bound in Theorem~\ref{thm:rate}]
Let $A=\{a_1,\ldots,a_n\}$, $a=(a_1,\ldots,a_n)$, and
$\mathbf1=(1,\ldots,1)$. In $(\R/\Z)^n$, let
\[
C=(-1/8,1/8)^n,\qquad
D=C\cup\bigl(C+\tfrac12\mathbf1\bigr).
\]
Then $|D|=2\cdot4^{-n}$ and
\[
D-D=(-1/4,1/4)^n
\cup\bigl(\tfrac12\mathbf1+(-1/4,1/4)^n\bigr).
\]
The two sets on the right are the common positive and common negative
regions of the coordinate cosines. Hence Lemma~\ref{lem:slice-difference}
gives
\[
P(A)\ge 2|D|=4^{1-n}.
\]
Taking the infimum over $A$ proves the claim.
\end{proof}

\subsection{A cyclic construction}

For a finite nonempty set $A$ of odd positive integers,
\eqref{eq:odd-symmetry} gives $P(A)=2|A_+|$.
We call $A$ an \emph{arc set of parameter $X$} if $X\ge4$ and
\[
A_+=(-1/X,1/X)+\Z
\quad\text{up to a null set}.
\]
Necessarily $X=4\max A$, since the first positive zero among these
cosines is $1/(4\max A)$. In particular,
\begin{equation}\label{eq:arcparameter}
P(A)=\frac4X.
\end{equation}

For $m\ge0$, let
\[
Q_m=\frac{2\cdot4^m+1}{3}.
\]
Thus $Q_0=1$ and $Q_m=4Q_{m-1}-1$ for $m\ge1$.
The following lemma is the $r=4$ case of the cyclic construction of
Polak and Schrijver recorded in \citet[Theorem~9.3.3]{Polak}.
We give a proof in base four.

\begin{lemma}\label{lem:cyclic-isolation}
For every $m\ge1$ and $j\not\equiv0\pmod{Q_m}$,
\[
\max_{0\le i<m}\dist(4^ij,Q_m\Z)
\ge Q_{m-1}>\frac{Q_m}{4}.
\]
\end{lemma}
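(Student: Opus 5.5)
The plan is to pass to the circle $\R/\Z$ and read $j/Q_m$ as a signed base-four expansion. The second inequality is immediate: $Q_m=4Q_{m-1}-1$ gives $Q_{m-1}=(Q_m+1)/4>Q_m/4$. For the first inequality we argue by contradiction. Suppose $j\not\equiv0\pmod{Q_m}$ and $\dist(4^ij,Q_m\Z)\le Q_{m-1}-1$ for all $0\le i<m$. Let $r_i$ be the representative of $4^ij$ modulo $Q_m$ with $|r_i|\le Q_{m-1}-1$. Then $r_0\ne0$.

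\textbf{Step 1 (one-step dynamics).} Since $|4r_i|\le 4Q_{m-1}-4=Q_m-3$, we can write
\[
r_{i+1}=4r_i-\varepsilon_iQ_m,\qquad \varepsilon_i\in\{-1,0,1\},\qquad 0\le i<m-1 .
\]
We also set $r_m=4r_{m-1}-\varepsilon_{m-1}Q_m$, choosing $\varepsilon_{m-1}\in\{-1,0,1\}$ so that $|r_m|\le Q_m/2$.

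The bound $|r_{i+1}|\le Q_{m-1}-1$ then constrains the digit:
\begin{itemize}[leftmargin=*]
\item $\varepsilon_i=0$ forces $|r_i|\le (Q_{m-1}-1)/4$;
\item $\varepsilon_i=\pm1$ forces $\pm r_i\ge 3Q_{m-1}/4$.
\end{itemize}
So the $r_i$ never enter the middle zone $\bigl((Q_{m-1}-1)/4,\,3Q_{m-1}/4\bigr)$, and each digit is determined by $r_i$.

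\textbf{Step 2 (telescoping and the key congruence).} Iterating gives
\[
4^mr_0=r_m+Q_mN,\qquad N=\sum_{i<m}\varepsilon_i4^{m-1-i},\qquad |N|\le\frac{4^m-1}{3}.
\]
Multiply by $3$ and use $3Q_m=2\cdot4^m+1$:
\[
4^m(3r_0-2N)=3r_m+N.
\]
The right-hand side is bounded by roughly $\tfrac32Q_m+\tfrac13 4^m<\tfrac43\cdot 4^m+1$, so $k:=3r_0-2N\in\{-1,0,1\}$. Hence $3r_m=k4^m-N$ is pinned down exactly by the digits. The same identity applied to the shifted sequence $r_i,\dots,r_m$ expresses each $r_i$ through the tail digits $\varepsilon_i,\dots,\varepsilon_{m-1}$ and $k$. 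Concretely,
\[
\frac{r_i}{Q_m}\approx\sum_{l\ge i}\varepsilon_l4^{\,i-l-1}+(\text{a correction of size } k\,4^{i-m}).
\]

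\textbf{Step 3 (contradiction from the digit constraints).} This is the step I expect to be the main obstacle. The plan is to show that the exact formula from Step 2 is incompatible with the constraints from Step 1, by following the first and last nonzero digits.

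At the first index $i$ with $\varepsilon_i\ne0$, Step 1 requires $|r_i|\ge 3Q_{m-1}/4$. By the expansion this needs the following digits to have the same sign. Continuing this way, all nonzero digits have equal sign and no zero digit follows a nonzero one. If every digit is zero, then $N=0$ and $3r_0=k\in\{-1,0,1\}$, so $r_0=0$, which is excluded. Otherwise the string has the form $0\cdots0\,\varepsilon\cdots\varepsilon$ with constant $\varepsilon=\pm1$. One then checks directly that this forces $|r_0|=Q_{m-1}$ or a neighbouring value, which violates $|r_0|\le Q_{m-1}-1$; the extremal value is attained at $j=\pm Q_{m-1}$, consistent with the lemma being sharp.

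The delicate point is the boundary bookkeeping. Because of the $-1$ in $Q_m=4Q_{m-1}-1$ and the rounding in the last digit $\varepsilon_{m-1}$, I would first try an induction on $m$ via $Q_m=4Q_{m-1}-1$, if the congruence route gets messy. In that approach, the reduction $r\mapsto 4r\bmod Q_m$ restricted to the bad set $\{|r|\le Q_{m-1}-1\}$ is compared with the corresponding map modulo $Q_{m-1}$.
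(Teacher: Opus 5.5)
\textbf{Step~3 has a genuine gap.} Steps~1 and~2 are sound, including the bound $k=3r_0-2N\in\{-1,0,1\}$. But Step~3 carries the whole contradiction, and the structural claim it rests on is false.

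Step~1 itself shows that $\varepsilon_i=+1$ forces $r_i>0$ and
$r_{i+1}=4r_i-Q_m\le4(Q_{m-1}-1)-Q_m=-3<0$. So the next nonzero digit must be $-1$. In other words, the nonzero digits \emph{alternate} in sign. Zeros can also follow a nonzero digit: for instance, $r_i=Q_{m-1}-1$ gives $r_{i+1}=-3$ and $\varepsilon_{i+1}=0$.

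Your expansion argument reverses the sign because it uses only the lower bound $|r_i|\ge3Q_{m-1}/4$. It drops the upper bound $|r_i|\le Q_{m-1}-1<Q_m/4$. The digit $\varepsilon_i=\pm1$ alone already contributes $\pm1/4$, so the tail must pull $r_i/Q_m$ back toward $0$.

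Consistently with this, the sharp case $j=Q_{m-1}$ has $r_1=1$ and $r_i=4^{i-1}$, hence digit string $1,0,\dots,0$, not $0\cdots0\,\varepsilon\cdots\varepsilon$. So your ``direct check'' examines patterns that are not the relevant ones, and no contradiction has actually been derived. The induction fallback is only named, not carried out.

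A minor point: your formula for $r_i$ also involves the head digits, not just the tail. Precisely,
$3r_i=k4^i+2\cdot4^iN_i-\sum_{l<i}\varepsilon_l4^{i-1-l}$ with $N_i=\sum_{l\ge i}\varepsilon_l4^{m-1-l}$.

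\textbf{The missing idea: the digits telescope.} Your framework can be completed as follows.
\begin{enumerate}[label=(\roman*)]
\item Take $r_i$ to be the representative of $4^ij$ in $(-Q_m/2,Q_m/2)$ for $0\le i\le m$; this agrees with your choice. Put $\delta_i=1$ if $r_i<0$ and $\delta_i=0$ otherwise. Every $r_i\ne0$ because $Q_m$ is odd.
\item Since $|4r_i|<Q_m$ for $i<m$, one checks $\varepsilon_i=\delta_{i+1}-\delta_i$. Hence $N\equiv\sum_i\varepsilon_i=\delta_m-\delta_0\pmod 3$.
\item Your identity gives $k=3r_0-2N\equiv N\pmod 3$. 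Both $k$ and $\delta_m-\delta_0$ lie in $\{-1,0,1\}$, so $k=\delta_m-\delta_0$.
\item Replace $j$ by $-j$ if necessary so that $r_0>0$. If $r_m>0$, then $k=0$ and $3r_m=-N<0$, because $N=(4^mr_0-r_m)/Q_m>0$. If $r_m<0$, then $k=1$ and $3r_m=4^m-N\ge4^m-(4^m-1)/3>0$. Either case is a contradiction.
\end{enumerate}

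This is the paper's argument in signed-digit form. There, the hypothesis says the ordinary base-four digits of $j/Q_m$ lie in $\{0,3\}$, with digit $3$ exactly when $r_i<0$. Hence $\lfloor4^mj/Q_m\rfloor\equiv0\pmod3$. On the other hand, $4^m=(3Q_m-1)/2$ gives $\lfloor4^mj/Q_m\rfloor=\lfloor3j/2-j/(2Q_m)\rfloor\equiv\pm1\pmod 3$ for $1\le j<Q_m$.
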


\begin{proof}
Let $q=Q_m$ and $1\le j<q$. Suppose that all $m$ distances are less
than $q/4$. Then the first $m$ base-four digits of $j/q$ belong to
$\{0,3\}$, so $K=\lfloor4^mj/q\rfloor$ is divisible by $3$.
On the other hand, $4^m=(3q-1)/2$ gives
\[
K=\left\lfloor\frac{3j}{2}-\frac{j}{2q}\right\rfloor
=\begin{cases}
3h-1,&j=2h,\\[3pt]
3h+1,&j=2h+1.
\end{cases}
\]
Both cases contradict $3\mid K$. Since $q\equiv3\pmod4$, an integer
distance at least $q/4$ is at least $(q+1)/4=Q_{m-1}$.
\end{proof}

For $m\ge1$, let
\[
D_m=\{1\}\cup\{Q_m-4^i:1\le i<m\},\qquad d_m=\max D_m.
\]
For $1\le i<m$, replacing $4^i$ by $Q_m-4^i$ preserves the cosine values on the
$Q_m$-point grid and makes all frequencies odd. Thus $D_m$ consists of
$m$ distinct odd positive integers smaller than $Q_m$, and
\[
d_1=1,\quad d_m=Q_m-4,\quad m\ge2.
\]

\begin{proposition}\label{prop:prefix-family}
Let $m\ge1$, and let $B$ be an arc set of parameter $X\ge4d_m$.
For $A=D_m\cup Q_mB$, we have
\[
|A|=|B|+m,\qquad
A_+=\left(-\frac1{Q_mX},\frac1{Q_mX}\right)+\Z
\quad\text{up to a null set}.
\]
In particular, $A$ is an arc set of parameter $Q_mX$.
\end{proposition}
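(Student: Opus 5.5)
The plan is to intersect two descriptions of the common positive set: one coming from the block $Q_mB$, which confines $t$ to short arcs around the points of the cyclic grid $Q_m^{-1}\Z$, and one coming from $D_m$, which will rule out every grid point except $0$ by Lemma~\ref{lem:cyclic-isolation}.

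\emph{Counting and parity.} Since $Q_m=(2\cdot4^m+1)/3$ is odd and $B$ consists of odd positive integers, $Q_mB$ consists of odd integers that are at least $Q_m$. As recorded before the proposition, $D_m$ consists of $m$ distinct odd positive integers smaller than $Q_m$. Hence $D_m$ and $Q_mB$ are disjoint, $A$ consists of odd positive integers, and $|A|=|B|+m$. This also lets us use \eqref{eq:odd-symmetry} and the definition of an arc set for $A$. The parameter satisfies $Q_mX\ge4$ trivially.

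\emph{Contribution of the block $Q_mB$.} Since $B$ is an arc set of parameter $X$, we have $\cos(2\pi bQ_mt)>0$ for all $b\in B$ exactly when $Q_mt\in(-1/X,1/X)+\Z$, up to a null set. Equivalently,
\[
t=\frac{j}{Q_m}+s,\qquad j\in\Z/Q_m\Z,\quad |s|<\frac1{Q_mX}.
\]
So $A_+$ is, up to null sets, the set of such $t$ at which every $d\in D_m$ satisfies $\cos(2\pi dt)>0$. The key bound throughout is that for $d\in D_m$,
\[
|ds|<\frac{d_m}{Q_mX}\le\frac1{4Q_m},
\]
which is where the hypothesis $X\ge4d_m$ enters.

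\emph{The arc $j=0$ survives.} If $j=0$, then every $d\in D_m$ has $\dist(dt,\Z)=|ds|<1/(4Q_m)\le1/4$. So all cosines from $D_m$ are positive, and the whole arc $(-1/(Q_mX),1/(Q_mX))$ lies in $A_+$.

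\emph{The arcs $j\not\equiv0$ are killed.} Lemma~\ref{lem:cyclic-isolation} gives an index $0\le i<m$ with $\dist(4^ij/Q_m,\Z)\ge Q_{m-1}/Q_m$. Take the corresponding frequency $d\in D_m$: $d=1$ if $i=0$, and $d=Q_m-4^i$ if $i\ge1$. In the second case $dj/Q_m\equiv-4^ij/Q_m\pmod 1$, so in either case $\dist(dj/Q_m,\Z)\ge Q_{m-1}/Q_m=(Q_m+1)/(4Q_m)$. Combining this with the bound on $|ds|$,
\[
\dist(dt,\Z)\ge\frac{Q_m+1}{4Q_m}-\frac1{4Q_m}=\frac14.
\]
Hence $\cos(2\pi dt)\le0$ and $t\notin A_+$. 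Together with the previous step, this shows $A_+$ is exactly the claimed arc up to null sets, so $A$ is an arc set of parameter $Q_mX$.

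\emph{Main obstacle.} The delicate point is the exact bookkeeping of the margins. The strict inequality $Q_{m-1}>Q_m/4$ from Lemma~\ref{lem:cyclic-isolation} exceeds $Q_m/4$ by exactly $1/4$, which is precisely what absorbs the perturbation $|ds|<1/(4Q_m)$. This works only because $X\ge4d_m$ bounds $|ds|$ uniformly over all of $D_m$, including the largest frequency $d_m=Q_m-4$. It is also where the replacement of $4^i$ by $Q_m-4^i$ must be checked to preserve the grid distances.
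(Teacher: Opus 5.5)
Your proof is correct and follows essentially the paper's route. You decompose $(Q_mB)_+$ into arcs about the grid points $j/Q_m$, keep the central arc, and eliminate the others with Lemma~\ref{lem:cyclic-isolation}, using $X\ge4d_m$ to control the perturbation by $|ds|<1/(4Q_m)$. The only difference is where the margin $1/(4Q_m)$ comes from: you take it from the bound $Q_{m-1}=(Q_m+1)/4$ in the isolation lemma for the witnessing frequency, while the paper derives it from the oddness of $Q_m$ (each grid point is at distance at least $1/(4aQ_m)$ from the zeros of $\cos(2\pi at)$), which is the same arithmetic fact.
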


\begin{proof}
Let $q=Q_m$ and $\beta=1/X$. Then, up to a null set,
\begin{equation}\label{eq:sampling}
(qB)_+=\bigcup_{j\in\Z}
\left(\frac{j-\beta}{q},\frac{j+\beta}{q}\right)\pmod1.
\end{equation}
At $j/q$, the cosines with frequencies in $D_m$ have the same values
as those with frequencies $1,4,\ldots,4^{m-1}$.
By Lemma~\ref{lem:cyclic-isolation}, they are all positive only when
$j\equiv0\pmod q$.

For $a\in\N$ and $k\in\Z$, the oddness of $q$ gives
\[
\left|\frac jq-\frac{2k+1}{4a}\right|
=\frac{|4aj-q(2k+1)|}{4aq}\ge\frac1{4aq}.
\]
Since $\beta\le1/(4d_m)$, no cosine with frequency in $D_m$ changes
sign inside an interval of \eqref{eq:sampling}. Thus only the central
interval remains:
\[
(D_m\cup qB)_+=(-\beta/q,\beta/q)+\Z
\quad\text{up to a null set}.
\]
The blocks are disjoint since $\max D_m<q$, giving $|B|+m$ elements.
\end{proof}

\subsection{Iteration with increasing block lengths}

Proposition~\ref{prop:prefix-family} adds $m$ frequencies and divides
the common-sign probability by $Q_m$. Since
\[
Q_m\ge\frac23\,4^m,\qquad
P(D_m\cup Q_mB)=\frac{P(B)}{Q_m}
\le\frac32\,4^{-m}P(B),
\]
each step costs at most a factor $3/2$ relative to the scale $4^{-m}$.
We choose the largest admissible block at every step. The resulting
cardinalities satisfy $n_j\ge\theta2^j$, so the number of steps is at
most $\log_2 n+O(1)$. This yields the factor $n^{\log_2(3/2)}$.
For $n_j<n<n_{j+1}$, we use a smaller final block.

\begin{proof}[Proof of the upper bound and limit in Theorem~\ref{thm:rate}]
Put $\delta=\theta/2=\log_4(3/2)$.
The identities
\[
\{1,3\}_+=(-1/12,1/12)+\Z,\qquad
\{1,3,9\}_+=(-1/36,1/36)+\Z
\]
follow by intersecting the positive intervals of the indicated cosines.
Let $B_0=\{1,3,9\}$, $n_0=3$, and $X_0=36$.
Given $B_j$, choose the largest integer $m_j\ge1$ with
$4d_{m_j}\le X_j$, and let
\[
\begin{gathered}
B_{j+1}=D_{m_j}\cup Q_{m_j}B_j,\qquad
n_{j+1}=n_j+m_j,\qquad X_{j+1}=Q_{m_j}X_j.
\end{gathered}
\]
By Proposition~\ref{prop:prefix-family}, $B_j$ is an arc set of
cardinality $n_j$ and parameter $X_j$.

Let $L_j=\log_4X_j$. Since $Q_m\ge(2/3)4^m$,
\begin{equation}\label{eq:parameter-growth}
L_j\ge\log_436+n_j-3-\delta j.
\end{equation}
Since $X_j\ge36$, we have $m_j\ge2$. For $m\ge2$, the condition
$4d_m\le X_j$ is equivalent to
\[
4^m\le\frac{3X_j+44}{8}.
\]
Hence $m_j\ge L_j-\log_4(8/3)-1$.
Since $\log_436=2+\theta$ and $\log_4(8/3)=1-\delta$,
\eqref{eq:parameter-growth} gives
\[
n_{j+1}\ge2n_j-3+3\delta-\delta j.
\]
With $b_j=n_j-\delta j-3+\theta$, this recurrence becomes
$b_{j+1}\ge2b_j$, and $b_0=\theta$. Hence
\begin{equation}\label{eq:cardinality-growth}
n_j\ge\theta2^j+\delta j+3-\theta\ge\theta2^j.
\end{equation}

Given $n\ge3$, choose $j$ with $n_j\le n<n_{j+1}$.
If $n=n_j$, use $B_j$. Otherwise, $m=n-n_j<m_j$ gives
$4d_m\le4d_{m_j}\le X_j$, so Proposition~\ref{prop:prefix-family}
applies. In either case we obtain an arc set
of $n$ elements with parameter
\[
X\ge36\cdot4^{n-3}\left(\frac{2}{3}\right)^{j+1}.
\]
By \eqref{eq:arcparameter} and \eqref{eq:cardinality-growth},
\[
p_n\le\frac4X
\le\frac{32}{3}\left(\frac{3}{2}\right)^j4^{-n}
\le\frac{32}{3}\theta^{-\theta}n^\theta4^{-n}.
\]
Since $1/2<\theta<1$ and $t^t$ increases on $[1/2,1]$,
$\theta^{-\theta}\le\sqrt2$. The inequality $32\sqrt2/3<16$ proves
the upper bound for $n\ge3$; for $n=1,2$ it follows from $p_n\le1$.

Taking $n$th roots gives
\[
4^{1/n-1}\le p_n^{1/n}
\le\frac14\,16^{1/n}n^{\theta/n}.
\]
The outer terms tend to $1/4$, proving the limit. Taking logarithms
also gives $\log p_n=-n\log4+O(\log n)$.
\end{proof}

\section{Reduction of the four-frequency problem}\label{sec:basic}

We express $P$ in terms of pair correlations and one fourth-order
correlation. We first record their algebraic relations,
then remove configurations with mixed parity of type $1+3$ and those
with a common factor among three frequencies. The last estimate in this
section controls the fourth-order term for odd frequencies.

\subsection{Correlation identities}

For positive integers $a,b$ and a nonempty finite list
$A=(a_1,\ldots,a_n)$, let
\[
C(a,b)=\E[s(ax)s(bx)],\qquad
Q(A)=\E\prod_{j=1}^n s(a_jx),\qquad
S(A)=\sum_{i<j}C(a_i,a_j).
\]
For odd $m$, let $\chi(m)=\chi_4(m)=(-1)^{(m-1)/2}$;
for $m\equiv\pm1\pmod 6$, let $\chi_6(m)=\pm1$,
where $\chi_4$ and $\chi_6$ are the nonprincipal
Dirichlet characters modulo $4$ and $6$, respectively.
Our Fourier convention is
\[
\widehat f(k)=\frac1{2\pi}\int_0^{2\pi}f(y)e^{-iky}\,dy,
\qquad k\in\Z.
\]

\begin{lemma}\label{lem:expansion}
For positive integers $a_1,\ldots,a_4$,
\[
\begin{aligned}
2P(a_1,a_2)&=1+C(a_1,a_2),\\
4P(a_1,a_2,a_3)&=1+S(a_1,a_2,a_3),\\
8P(a_1,a_2,a_3,a_4)&=1+S(a_1,a_2,a_3,a_4)+Q(a_1,a_2,a_3,a_4).
\end{aligned}
\]
\end{lemma}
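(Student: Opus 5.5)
The plan is to write the indicator of the common-sign event as a polynomial in the signs and take expectations. Set $s_j=s(a_jx)$, which takes the values $\pm1$ almost everywhere; the zero sets of the cosines are null. For $n$ signs, all of them agree if and only if either $\prod_j\frac{1+s_j}{2}=1$ or $\prod_j\frac{1-s_j}{2}=1$, and these two events are disjoint. The indicator of the common-sign event is therefore
\[
\prod_{j=1}^n\frac{1+s_j}{2}+\prod_{j=1}^n\frac{1-s_j}{2}
=2^{1-n}\sum_{\substack{J\subset\{1,\ldots,n\}\\ |J|\ \text{even}}}\prod_{j\in J}s_j ,
\]
because in the expansion the terms with $|J|$ odd cancel and those with $|J|$ even double. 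Multiplying by $2^{n-1}$ and averaging with $\E$ gives
\[
2^{n-1}P(a_1,\ldots,a_n)=\sum_{|J|\ \text{even}}\E\prod_{j\in J}s_j .
\]

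Next I specialize to $n=2,3,4$. The empty set contributes $1$. The sets with $|J|=2$ contribute $\E[s(a_ix)s(a_jx)]=C(a_i,a_j)$, and summing these over $i<j$ gives $S$. For $n=4$ the only remaining set is $J=\{1,2,3,4\}$, which contributes $Q(a_1,\ldots,a_4)$. For $n=3$ there is no even-sized set beyond the pairs. For $n=2$ the single pair gives $C(a_1,a_2)$. These are exactly the three stated identities.

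No step is a real obstacle. The only points to check are that the sign functions equal $\pm1$ almost everywhere, which holds because cosines with positive integer frequencies have finitely many zeros per period, and that repeated frequencies cause no trouble. Repetitions are harmless: if $a_i=a_j$ then $C(a_i,a_j)=1$, and the identity still holds because the derivation never used distinctness.
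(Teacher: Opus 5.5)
Your proposal is correct and follows the paper's proof: write the common-sign indicator as $2^{-n}\bigl(\prod_j(1+s_j)+\prod_j(1-s_j)\bigr)$, note that the odd-degree terms cancel, and integrate term by term. Your extra remarks, that zeros of the cosines form a null set and that repeated frequencies are harmless, are valid and only make explicit what the paper leaves implicit.
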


\begin{proof}
For $\epsilon_1,\ldots,\epsilon_n\in\{\pm1\}$,
\[
\one[\epsilon_1=\cdots=\epsilon_n]
=2^{-n}\left(\prod_{j=1}^n(1+\epsilon_j)
             +\prod_{j=1}^n(1-\epsilon_j)\right).
\]
The odd-degree terms cancel. Substitute $\epsilon_j=s(a_jx)$ and integrate.
\end{proof}

\begin{lemma}\label{lem:pair}
Let $a,b\in\N$, and write $a=gu$, $b=gv$, where $\gcd(u,v)=1$.
Then
\[
C(a,b)=
\begin{cases}
0,&uv\text{ even},\\[4pt]
\dfrac{\chi(u)\chi(v)}{uv},&uv\text{ odd}.
\end{cases}
\]
In particular, for odd
$a,b$,
\begin{equation}\label{eq:pairodd}
C(a,b)=\chi(a)\chi(b)\,\frac{\gcd(a,b)^2}{ab}.
\end{equation}
\end{lemma}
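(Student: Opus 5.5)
We would prove Lemma~\ref{lem:pair} by expanding the square wave $s(x)=\sgn\cos x$ in its Fourier series and applying Parseval. The classical expansion is
\[
s(x)=\frac4\pi\sum_{k\ge0}\frac{(-1)^k}{2k+1}\cos((2k+1)x).
\]
Equivalently, $\widehat s(m)=\frac{2}{\pi}\frac{\chi(m)}{m}$ for odd $m$ (positive or negative, with $\chi(-m)=\chi(m)$), and $\widehat s(m)=0$ for even $m$. Hence
\[
\widehat{s(a\cdot)}(k)=\widehat s(k/a)\quad\text{if }a\mid k,\qquad \widehat{s(a\cdot)}(k)=0\quad\text{otherwise}.
\]
By Parseval,
\[
C(a,b)=\sum_{k}\widehat s(k/a)\,\overline{\widehat s(k/b)},
\]
where the sum runs over common multiples $k$ of $a$ and $b$ with $k/a$ and $k/b$ both odd. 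Since everything is real, we will not need to track conjugates.

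First we reduce by dilation: $C(ga,gb)=C(a,b)$ by \eqref{eq:dilation}-type invariance, since integer dilation preserves the normalized measure. So we may assume $a=u$, $b=v$ with $\gcd(u,v)=1$. The common multiples are then $k=\ell uv$, with $k/u=\ell v$ and $k/v=\ell u$.

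If $uv$ is even, one of $u,v$ is even, so one of $\ell v,\ell u$ is even for every $\ell$. Every term vanishes, giving $C=0$.

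If $uv$ is odd, the nonzero terms are exactly those with $\ell$ odd. We get
\[
C=\sum_{\ell\text{ odd}}\frac{4}{\pi^2}\,\frac{\chi(\ell v)\chi(\ell u)}{\ell^2uv}.
\]
Since $\chi$ is completely multiplicative on odd integers, $\chi(\ell)^2=1$, so each term is $\frac{4}{\pi^2}\frac{\chi(u)\chi(v)}{\ell^2 uv}$. Summing over odd $\ell\in\Z$ (both signs),
\[
\sum_{\ell\text{ odd}}\ell^{-2}=2\cdot\frac{\pi^2}{8}=\frac{\pi^2}{4},
\]
which gives $C=\chi(u)\chi(v)/(uv)$.

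For \eqref{eq:pairodd}, write $a=gu$, $b=gv$ with $g$ odd. Then $uv=ab/g^2$. Also $\chi(a)\chi(b)=\chi(g)^2\chi(u)\chi(v)=\chi(u)\chi(v)$ by multiplicativity.

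The only step needing care is the Fourier coefficient computation, including the sign convention $\chi(m)=(-1)^{(m-1)/2}$ and its extension to negative odd $m$, where $\chi(-m)=\chi(m)$ holds because the cosine series is even. We would also justify Parseval for these $L^2$ functions, which is immediate since $s\in L^2$. No real obstacle is expected.
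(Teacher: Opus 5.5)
Your argument is correct and is the same as the paper's: you reduce to coprime $u,v$ by dilation, use $\widehat s(m)=2\chi(m)/(\pi m)$ for odd $m$, observe that the matching Parseval indices are $(\ell v,\ell u)$, and sum $\sum_{\ell\text{ odd}}\ell^{-2}=\pi^2/4$. One small correction: with $\chi(m)=(-1)^{(m-1)/2}$ one has $\chi(-m)=-\chi(m)$, not $\chi(-m)=\chi(m)$, and this is exactly what makes $\chi(m)/m$ even, as the evenness of $s$ requires; the slip is harmless, since both indices $\ell v$ and $\ell u$ in each term have the sign of $\ell$, so their product is unaffected.
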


\begin{proof}
Integer dilation gives $C(a,b)=C(u,v)$. The Fourier coefficients of $s$ are
\[
\widehat s(k)=
\begin{cases}
2\chi(k)/(\pi k),&k\text{ odd},\\[3pt]
0,&k\text{ even}.
\end{cases}
\]
The matching indices in Parseval's identity satisfy
$um=vm'$, hence $(m,m')=(vk,uk)$. If $uv$ is even, every product of
coefficients vanishes. Otherwise,
\[
C(a,b)=\frac{4\chi(u)\chi(v)}{\pi^2uv}
       \sum_{\substack{k\in\Z\\k\text{ odd}}}\frac1{k^2}
       =\frac{\chi(u)\chi(v)}{uv}.
\]
Here $\sum\limits_{k\text{ odd}}k^{-2}=\pi^2/4$, with $k\in\Z$.
For odd $a,b$, the identity
$\chi(a)\chi(b)=\chi(g)^2\chi(u)\chi(v)$ proves \eqref{eq:pairodd}.
\end{proof}

For odd $a,b$, negative correlations occur precisely when
$a\not\equiv b\pmod4$. They equal $-1/(uv)$, where $u:v$ is the
reduced ratio and $uv\equiv3\pmod4$. The three smallest products,
$3,7,11$, correspond to ratios $1:3$, $1:7$, $1:11$.

\subsection{Parity and primitive quadruples}

Suppose $A$ contains both odd and even frequencies, and let $O$ and $E$
be its odd and even subsets.
For any nonempty set $T$, put
\[
G_T=\{x\in\R/(2\pi\Z):s(tx)\text{ is independent of }t\in T\}.
\]
We write $\Pr$ for normalized Lebesgue measure on this circle.
The translation $x\mapsto x+\pi$ reverses the common sign on $O$
and preserves it on $E$. This gives the following reduction.

\begin{lemma}\label{lem:dilation}
If $O,E\ne\varnothing$, then
\begin{equation}\label{eq:parity-split}
P(A)=\frac12\Pr(G_O\cap G_E).
\end{equation}
\end{lemma}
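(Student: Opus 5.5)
The plan is to decompose the common-sign event according to whether the shared sign is $+1$ or $-1$, and then use the half-period translation to fold the negative part onto the positive part. Since all signs agree, the odd signs agree among themselves and the even signs agree among themselves, so every $x$ in the event lies in $G_O\cap G_E$. Conversely, $G_O\cap G_E$ is the disjoint union, up to null sets, of
\[
F_{=}=\{x\in G_O\cap G_E:\text{common sign on }O=\text{common sign on }E\}
\]
and $F_{\ne}$, defined with the two common signs opposite. The common-sign event is exactly $F_{=}$, so $P(A)=\Pr(F_{=})$.

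The key step is to show that $\tau(x)=x+\pi$ maps $F_{=}$ onto $F_{\ne}$ up to null sets. For odd $t$ we have $s(t(x+\pi))=s(tx+t\pi)=-s(tx)$, because $\cos(y+t\pi)=-\cos y$. For even $t$ we instead have $s(t(x+\pi))=s(tx)$. Hence $\tau$ preserves membership in $G_O$ and in $G_E$. It also flips the common sign on $O$ while keeping the common sign on $E$. Therefore $\tau(F_{=})=F_{\ne}$, and since $\tau$ is an involution, $\tau(F_{\ne})=F_{=}$.

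Because translation preserves normalized Lebesgue measure on $\R/(2\pi\Z)$, this gives $\Pr(F_{=})=\Pr(F_{\ne})$. Adding the two pieces yields $\Pr(G_O\cap G_E)=2\Pr(F_{=})=2P(A)$, which is \eqref{eq:parity-split}.

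The only point needing care is the null sets. The signs are undefined, or zero, only at zeros of some $\cos(a_jx)$, which form a finite set. That set is carried to another finite set by $\tau$, so it does not affect any of the measures. I do not expect a genuine obstacle here.
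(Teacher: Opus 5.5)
Your proof is correct and is essentially the paper's argument: the paper also splits $G_O\cap G_E$ according to whether the two common signs agree or differ, and uses $s(a(x+\pi))=(-1)^a s(ax)$ to show that translation by $\pi$ exchanges the two parts, so each part has measure $\frac12\Pr(G_O\cap G_E)$. Your explicit handling of the finite set of zeros is a harmless addition that the paper leaves to its standing convention about null sets.
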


\begin{proof}
Partition $G_O\cap G_E$ into $G_+$ and $G_-$ according as the two
common signs agree or differ. Since
\[
s(a(x+\pi))=(-1)^a s(ax),\qquad G_-=G_++\pi,
\]
we have
\[
P(A)=\Pr(G_+)=\Pr(G_-)=\frac12\Pr(G_O\cap G_E).\qedhere
\]
\end{proof}

A quadruple is \emph{primitive} if its frequencies have greatest common
divisor $1$. By dilation, it suffices to prove Theorem~\ref{thm:main} for
primitive quadruples. We recall the three-frequency theorem of
\citet{DGLLP} and give a short proof in the present notation.

\begin{proposition}\label{prop:three}
For distinct $a,b,c\in\N$,
\[
P(a,b,c)\ge\frac19,
\]
with equality if and only if $\{a,b,c\}=d\{1,3,9\}$ for some $d\in\N$.
\end{proposition}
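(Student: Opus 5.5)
By Lemma~\ref{lem:expansion} we have $4P(a,b,c)=1+S(a,b,c)$, so the claim is equivalent to $S(a,b,c)\ge-5/9$, with equality exactly for the dilates of $\{1,3,9\}$. The plan is to bound each of the three pair correlations using Lemma~\ref{lem:pair}. Its basic consequence is that for distinct $a,b$ the reduced ratio $u:v$ satisfies $uv\ge2$. A nonzero value requires $uv$ odd, hence $uv\ge3$. Therefore
\[
C(a,b)\ge-\tfrac13,
\]
with equality precisely when $\{a,b\}=\{g,3g\}$ (ratio $1:3$, and $\chi(1)\chi(3)=-1$). Moreover, by \eqref{eq:dilation} we may assume $\gcd(a,b,c)=1$.

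First I dispose of parity. Suppose some frequency is even and another is odd. The reduced ratio of such a pair has $uv$ even, so its correlation vanishes by Lemma~\ref{lem:pair}. With three frequencies and both parities present, at most one pair has equal parity. Hence $S\ge-1/3$ and $P\ge1/6>1/9$. All frequencies even is excluded by primitivity. So we may assume $a,b,c$ are all odd.

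For odd frequencies, \eqref{eq:pairodd} shows that $C(a,b)$ is negative exactly when $a\not\equiv b\pmod 4$, and positive otherwise. Among three odd numbers only two residue classes mod $4$ are available. So either all three lie in one class, giving $S>0$, or one element $x$ lies in one class and $y,z$ lie in the other. In the second case,
\[
S=C(x,y)+C(x,z)+C(y,z),\qquad C(y,z)>0.
\]
I now split according to whether both negative terms equal $-1/3$.

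If $C(x,y)=C(x,z)=-1/3$, then each of $y,z$ lies in $\{3x,x/3\}$. Since $y\ne z$, we get $\{y,z\}=\{3x,x/3\}$. Then $y:z$ reduces to $1:9$ and $C(y,z)=1/9$, so $S=-5/9$, and the set is $(x/3)\{1,3,9\}$. Otherwise one negative term is not $-1/3$. By the remark after Lemma~\ref{lem:pair}, the next possible negative value is $-1/7$ (the product $uv=7$). This gives
\[
S>-\tfrac13-\tfrac17=-\tfrac{10}{21}>-\tfrac59,
\]
so the inequality is strict.

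The only delicate step is this case analysis together with the observation that two residue classes cannot produce three negative correlations. With it, both the bound and the characterization of equality follow. Conversely, every dilate $d\{1,3,9\}$ attains $1/9$ by \eqref{eq:dilation} and the computation above.
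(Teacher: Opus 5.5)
Your proof is correct and follows the paper's argument essentially step by step. You reduce to primitive triples, dispose of mixed parity with $P\ge1/6$, and in the all-odd case use that negative correlations join different residue classes modulo $4$; either both come from $3$-pairs, forcing $d\{1,3,9\}$ with $S=-5/9$, or $S>-1/3-1/7>-5/9$. The only cosmetic difference is that you treat mixed parity through the vanishing of odd--even correlations in $4P=1+S$, rather than through the parity-split identity \eqref{eq:parity-split}, and this yields the same bound.
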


\begin{proof}
Divide by the greatest common divisor. If the parities are mixed, let
$u,v$ be the two frequencies of the same parity.
Equation~\eqref{eq:parity-split} and the pair correlation formula give
\[
P(a,b,c)=\frac12P(u,v)=\frac14(1+C(u,v))\ge\frac16.
\]
Otherwise all three frequencies are odd. There are at most two negative
correlations, since they join different residue classes modulo $4$.
Unless both arise from $3$-pairs,
\[
S\ge-\frac13-\frac17>-\frac59.
\]
Two $3$-pairs share a frequency $y$, whose only possible neighbours are
$y/3$ and $3y$. Thus the triple is $d\{1,3,9\}$, with
\[
S=-\frac13-\frac13+\frac19=-\frac59.
\]
The conclusion follows from $4P=1+S$.
\end{proof}

\begin{corollary}\label{cor:oneodd}
Let $A$ consist of four distinct positive integers. If exactly one or
exactly three frequencies in $A$ are odd, then
\[
P(A)\ge\frac1{18}.
\]
\end{corollary}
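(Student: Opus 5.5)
Assume first that exactly one frequency has the minority parity. Let $A=O\cup E$ with the minority class a singleton; without loss of generality $|O|=1$, $|E|=3$ (the case $|O|=3$, $|E|=1$ is symmetric). For a singleton, $G_O$ is the whole circle, so Lemma~\ref{lem:dilation} gives
\[
P(A)=\tfrac12\Pr(G_E).
\]

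Next I will identify $\Pr(G_E)$. By definition $G_E$ is the set where the three cosines with frequencies in $E$ share a common sign, so $\Pr(G_E)=P(E)$. The frequencies in $E$ are three distinct positive integers, so Proposition~\ref{prop:three} gives $P(E)\ge1/9$. Hence
\[
P(A)\ge\tfrac12\cdot\tfrac19=\tfrac1{18}.
\]

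The case of three odd frequencies and one even frequency is identical with the roles swapped: $G_E$ is the full circle and $P(A)=\frac12 P(O)\ge\frac1{18}$.

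I do not see a real obstacle. The only point to check is that Lemma~\ref{lem:dilation} applies when one parity class is a singleton. It does, because the lemma only requires $O,E\neq\varnothing$, and $G_T$ is the full circle when $|T|=1$. No primitivity reduction is needed, since Proposition~\ref{prop:three} holds for arbitrary distinct triples.
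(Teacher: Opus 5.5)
Your proposal is correct and follows the paper's proof: apply the parity split \eqref{eq:parity-split} with the singleton class contributing the full circle, so $P(A)=\frac12P(T)$ for the three same-parity frequencies $T$, and then use Proposition~\ref{prop:three}. Your check that Lemma~\ref{lem:dilation} only needs $O,E\neq\varnothing$, and that no primitivity reduction is required, is also right.
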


\begin{proof}
Let $T$ be the three frequencies of the same parity. By
Equation~\eqref{eq:parity-split} and Proposition~\ref{prop:three},
$P=\frac12P(T)\ge1/18$.
\end{proof}

A primitive quadruple has an odd frequency. The remaining parity
patterns are therefore four odd frequencies, or two odd and two even
frequencies.

\subsection{Common factors and components}

To treat common factors, we first compute the average of $s$ over
an equally spaced grid.

\begin{lemma}\label{lem:average}
For every $g\in\N$ and almost every $z\in\R$,
\[
\frac1g\sum_{j=0}^{g-1}s\Bigl(z+\frac{2\pi j}{g}\Bigr)
=\begin{cases}
0,&g\text{ even},\\[6pt]
\dfrac{\chi(g)}g s(gz),&g\text{ odd}.
\end{cases}
\]
\end{lemma}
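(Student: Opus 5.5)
The plan is to compute the grid average Fourier coefficient by Fourier coefficient, using the expansion of $s$ from the proof of Lemma~\ref{lem:pair}. Write
\[
s(y)=\sum_{k\ \mathrm{odd}}\frac{2\chi(k)}{\pi k}e^{iky}.
\]
This series converges in $L^2$ and also pointwise at every non-jump point. For the grid, we have
\[
\frac1g\sum_{j=0}^{g-1}e^{ik(z+2\pi j/g)}=e^{ikz}\,\one[g\mid k].
\]
Thus the average $F(z)=\frac1g\sum_j s(z+2\pi j/g)$ has Fourier series
\[
F(z)=\sum_{\substack{k\ \mathrm{odd}\\ g\mid k}}\frac{2\chi(k)}{\pi k}e^{ikz}.
\]
Since $F$ is a finite sum of translates of $s$, its Fourier coefficients are exactly these. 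The identity therefore holds in $L^2$, hence almost everywhere.

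\textbf{Case $g$ even.} No odd $k$ is divisible by $g$. Every coefficient vanishes, so $F=0$ almost everywhere.

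\textbf{Case $g$ odd.} Write $k=g\ell$. Then $k$ is odd if and only if $\ell$ is odd, and $\chi(k)=\chi(g)\chi(\ell)$ by multiplicativity of $\chi_4$ on odd integers. The series becomes
\[
\frac{\chi(g)}{g}\sum_{\ell\ \mathrm{odd}}\frac{2\chi(\ell)}{\pi\ell}e^{i\ell gz}=\frac{\chi(g)}{g}\,s(gz).
\]
Equality of all Fourier coefficients gives the claim for almost every $z$.

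There is no real obstacle. The only care needed is in justifying the passage from Fourier coefficients to an almost-everywhere identity, which is immediate because both sides are bounded functions in $L^2$ with the same coefficients. For a check, one could also argue directly for odd $g$: the function $F$ is $2\pi/g$-periodic, and on each period exactly $(g\pm1)/2$ grid points lie in the positive half. The sign pattern then matches $\chi(g)s(gz)/g$, since the imbalance is $\pm1$, with sign determined by $g\bmod 4$. The Fourier route avoids that bookkeeping.
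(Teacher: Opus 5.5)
Your proposal is correct and follows essentially the same route as the paper: the grid average keeps only the Fourier coefficients of $s$ with index divisible by $g$. For even $g$ these all vanish, and for odd $g$ one has $\widehat s(g\ell)=\chi(g)\widehat s(\ell)/g$ by multiplicativity of $\chi_4$. Your extra remark, that uniqueness of Fourier coefficients in $L^2$ upgrades the coefficientwise identity to an almost-everywhere one, makes explicit a step the paper leaves implicit.
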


\begin{proof}
The average retains precisely the Fourier coefficients with index
divisible by $g$. If $g$ is even they all vanish; if $g$ is odd, then
$\widehat s(gn)=\chi(g)\widehat s(n)/g$.
\end{proof}

For a nonempty finite list $T$ of positive integers, let
\[
h_T(x)=
\begin{cases}
1,&s(tx)=1\text{ for all }t\in T,\\[3pt]
-1,&s(tx)=-1\text{ for all }t\in T,\\[3pt]
0,&\text{otherwise}.
\end{cases}
\]
Then $|h_T|$ is the common-sign indicator, and for $c\in\N$,
\begin{equation}\label{eq:split}
P(T,c)=\frac12P(T)+\frac12\E[h_T(x)s(cx)],
\end{equation}
since the corresponding indicator is $(|h_T|+h_Ts(c\,\cdot))/2$.

The averaging identity gives the following formula for $P(gT,c)$.

\begin{proposition}\label{prop:transfer}
Let $T$ be a nonempty finite list of positive integers, and let $g,c\in\N$
satisfy $g\ge2$ and $\gcd(g,c)=1$. Then
\begin{equation}\label{eq:transfer}
P(gT,c)=\begin{cases}
\dfrac12P(T),&g\text{ even},\\[8pt]
\dfrac12P(T)+\dfrac{\chi(g)}g\left(P(T,c)-\dfrac12P(T)\right),&g\text{ odd}.
\end{cases}
\end{equation}
\end{proposition}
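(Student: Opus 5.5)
Start from the splitting identity \eqref{eq:split} applied to the list $gT$:
\[
P(gT,c)=\tfrac12P(gT)+\tfrac12\E[h_{gT}(x)s(cx)]=\tfrac12P(T)+\tfrac12\E[h_T(gx)s(cx)],
\]
where $P(gT)=P(T)$ by \eqref{eq:dilation}. The remaining task is to evaluate the correlation $\E[h_T(gx)s(cx)]$ between a function of $gx$ and the sign $s(cx)$, using $\gcd(g,c)=1$.

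\textbf{Step 1: reduce to a grid average.} The function $x\mapsto h_T(gx)$ has period $2\pi/g$. Averaging over the translates $x\mapsto x+2\pi j/g$, $0\le j<g$, gives
\[
\E[h_T(gx)s(cx)]=\E\Bigl[h_T(gx)\cdot\frac1g\sum_{j=0}^{g-1}s\bigl(cx+2\pi cj/g\bigr)\Bigr].
\]
Since $\gcd(c,g)=1$, the residues $cj \bmod g$ run over all of $\Z/g\Z$, so the inner sum is $\frac1g\sum_{j}s(cx+2\pi j/g)$. Lemma~\ref{lem:average}, applied with $z=cx$, evaluates this as $0$ when $g$ is even and as $\frac{\chi(g)}{g}s(gcx)$ when $g$ is odd, for almost every $x$.

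\textbf{Step 2: even $g$.} The correlation vanishes, so $P(gT,c)=\tfrac12P(T)$.

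\textbf{Step 3: odd $g$.} Here
\[
\E[h_T(gx)s(cx)]=\frac{\chi(g)}{g}\,\E[h_T(gx)s(cgx)]=\frac{\chi(g)}{g}\,\E[h_T(y)s(cy)],
\]
where the last equality substitutes $y=gx$ and uses that integer dilation preserves normalized measure. Applying \eqref{eq:split} to the list $(T,c)$ gives
\[
\tfrac12\E[h_T(y)s(cy)]=P(T,c)-\tfrac12P(T).
\]
Substituting this back yields \eqref{eq:transfer}.

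The only delicate point is the measure-theoretic one: Lemma~\ref{lem:average} holds only for almost every $z$, while we apply it at $z=cx$. This is harmless because $x\mapsto cx$ maps null sets to null sets. The reindexing $cj\mapsto j$ modulo $g$, where coprimality is used, is the key step.
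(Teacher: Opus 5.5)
Your proof is correct and follows essentially the same route as the paper: split via \eqref{eq:split}, average over the $g$ points $x$ sharing the same value of $gx$, use $\gcd(g,c)=1$ to reindex the residues $cj \bmod g$, and apply Lemma~\ref{lem:average}. The only difference is cosmetic: you average over translates $x\mapsto x+2\pi j/g$ and apply the lemma at $z=cx$ before undoing the dilation $y=gx$, whereas the paper conditions on $y=gx$ first and applies the lemma at $z=cy/g$.
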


\begin{proof}
By \eqref{eq:split} and dilation,
\[
P(gT,c)=\frac12P(T)+\frac12\E[h_T(gx)s(cx)].
\]
Condition on $y=gx\bmod2\pi$. The preimages are
$x=(y+2\pi j)/g$, $0\le j<g$, and multiplication by $c$ permutes the
residues modulo $g$. Lemma~\ref{lem:average} therefore gives
\[
\begin{aligned}
\E[h_T(gx)s(cx)]
&=\E\left[h_T(y)\frac1g\sum_{j=0}^{g-1}
 s\Bigl(\frac{cy}{g}+\frac{2\pi j}{g}\Bigr)\right]
=\begin{cases}
0,&g\text{ even},\\[6pt]
\dfrac{\chi(g)}g\E[h_T(y)s(cy)],&g\text{ odd}.
\end{cases}
\end{aligned}
\]
Together with \eqref{eq:split}, this proves \eqref{eq:transfer}.
\end{proof}

Since $0\le P(T,c)\le P(T)$, the transfer identity implies
\[
P(gT,c)\ge
\begin{cases}
\dfrac12P(T),&g\text{ even},\\[5pt]
\dfrac{1-1/g}{2}P(T),&g\text{ odd}.
\end{cases}
\]
If three frequencies of a primitive quadruple have a common factor
$g>1$, Proposition~\ref{prop:three} therefore gives
\begin{equation}\label{eq:common-factor-bound}
P\ge
\begin{cases}
\dfrac1{18},&g\text{ even},\\[5pt]
\dfrac{1-1/g}{18}\ge\dfrac1{27},&g\text{ odd}.
\end{cases}
\end{equation}
Both bounds exceed $1/33$.
For Theorem~\ref{thm:mixed}, Corollary~\ref{cor:oneodd} handles one or
three odd frequencies. With two odd and two even frequencies, a common
factor $g>1$ of three frequencies is odd, and the reduced triple still
has mixed parity. Its probability is at least $1/6$, so
\begin{equation}\label{eq:mixed-transfer}
P\ge\frac{1-1/g}{2}\cdot\frac16\ge\frac1{18}>\frac1{24}.
\end{equation}
For the remaining four-frequency arguments, we may therefore assume
\begin{equation}\label{eq:tripgcd}
\gcd(a_i,a_j,a_k)=1\qquad\text{whenever }i,j,k\text{ are distinct}.
\end{equation}

We shall also need a bound when a single frequency is large. In this
case cancellation on each common-sign interval gives an error controlled
by the number of intervals.

Up to finitely many points, the support of $h_T$ is a disjoint union of
open arcs on which $h_T$ is constant. Let $N(T)$ be the least number of
such arcs. Their right endpoints are distinct zeros of the cosines, so
\[
N(T)\le2\sum_{t\in T}t.
\]

\begin{lemma}\label{lem:component}
For every nonempty finite list $T$ of positive integers and every
$c\in\N$,
\begin{equation}\label{eq:component}
P(T,c)\ge\frac12P(T)-\frac{N(T)}{4c}.
\end{equation}
\end{lemma}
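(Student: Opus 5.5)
By \eqref{eq:split} it suffices to show
\[
\E[h_T(x)s(cx)]\ge-\frac{N(T)}{2c},
\]
since then $P(T,c)=\frac12P(T)+\frac12\E[h_Ts(c\,\cdot)]\ge\frac12P(T)-\frac{N(T)}{4c}$. We will not need the sign of this correlation, only the size of its error term.

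Write the support of $h_T$, up to finitely many points, as a disjoint union of $N=N(T)$ open arcs $I_1,\ldots,I_N$ on which $h_T$ equals a constant $\epsilon_k\in\{\pm1\}$. Then
\[
\E[h_Ts(c\,\cdot)]=\frac1{2\pi}\sum_{k=1}^N\epsilon_k\int_{I_k}s(cx)\,dx ,
\]
and it suffices to show that each arc contributes at least $-1/(2c)$ after normalization, that is,
\[
\Bigl|\int_I s(cx)\,dx\Bigr|\le\frac{\pi}{c}
\]
for every arc $I$ of the circle.

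For this, the function $x\mapsto s(cx)$ is a square wave of period $2\pi/c$ with mean zero over each full period. Any arc $I$ splits as a union of full periods, whose integrals vanish, plus a remainder of length less than $2\pi/c$. On that remainder, $s(cx)$ equals $+1$ on a set of measure at most $\pi/c$ and $-1$ on a set of measure at most $\pi/c$. Hence the remainder integral lies in $[-\pi/c,\pi/c]$. Equivalently, we can use the antiderivative $F(x)=\int_0^x s(cy)\,dy$, which is periodic with range of length $\pi/c$ (between $-\pi/(2c)$ and $\pi/(2c)$), so $|F(\beta)-F(\alpha)|\le\pi/c$. If an arc is the whole circle, the integral is $0$. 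Dividing by $2\pi$ gives a contribution of at least $-1/(2c)$ per arc, and summing over the $N$ arcs gives the claim.

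The only point needing care is the definition of $N(T)$ as the least number of arcs. Any decomposition into constant-sign arcs works for the bound, so we take a minimal one. We must also make sure the decomposition is taken on the circle $\R/(2\pi\Z)$ and not on $[0,2\pi)$, since an arc wrapping through $0$ must be counted once. The periodic-antiderivative formulation handles wrapped arcs automatically, so this step poses no real obstacle. The main content is simply the observation that $|F(\beta)-F(\alpha)|\le\pi/c$, and the estimate is sharp up to that constant.
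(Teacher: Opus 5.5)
Your proof is correct and is essentially the paper's argument. You use \eqref{eq:split} to reduce to bounding $\E[h_T s(c\,\cdot)]$, then note that the periodic primitive of $s(cx)$ is a triangle wave of range $\pi/c$. Hence each of the $N(T)$ arcs contributes at most $\pi/c$ in absolute value, which gives $\bigl|\E[h_T s(c\,\cdot)]\bigr|\le N(T)/(2c)$.
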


\begin{proof}
A primitive of $s(cx)$ is a triangle wave of total range $\pi/c$. Thus,
for every arc $J$,
\[
\left|\int_Js(cx)\,dx\right|\le\frac\pi c.
\]
Summing over the arcs gives
\[
\bigl|\E[h_Ts(c\,\cdot)]\bigr|\le\frac{N(T)}{2c},
\]
and \eqref{eq:split} proves \eqref{eq:component}.
\end{proof}

\subsection{The fourth-order term at odd frequencies}\label{sec:Q}

The pair correlations alone do not determine $P$ for four frequencies.
The remaining term admits a uniform lower bound, independent of the
frequencies. Its proof uses a multiplicative identity under translation
and doubling; the same argument applies to any even number of odd
frequencies.

\begin{proposition}\label{thm:Q}
For every $k\in\N$ and all odd positive integers $a_1,\ldots,a_{2k}$,
\[
\frac1{2\pi}\int_0^{2\pi}\prod_{j=1}^{2k}\sgn\cos(a_jx)\,dx
\ge-\frac13.
\]
\end{proposition}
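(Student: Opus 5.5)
The plan is to exploit two symmetries of $F(x)=\prod_{j=1}^{2k}s(a_jx)$ and then use the elementary inequality $(1+\alpha)(1+\beta)\ge0$ for $\alpha,\beta\in\{\pm1\}$. Write $\mu=\E F$; the goal is $1+3\mu\ge0$.

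\emph{Step 1 (translation).} For odd $a$ and any $\theta$, $\cos(\theta+a\pi/2)=-\chi(a)\sin\theta$. Hence
\[
F\bigl(x+\tfrac{\pi}{2}\bigr)=\varepsilon\prod_{j}\sgn\sin(a_jx),
\qquad \varepsilon=\prod_j\bigl(-\chi(a_j)\bigr)=\prod_j\chi(a_j),
\]
where the last equality uses that there is an even number $2k$ of factors.

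\emph{Step 2 (doubling).} From $\sgn\cos y\,\sgn\sin y=\sgn\sin 2y$ we get
\[
F(x)F\bigl(x+\tfrac{\pi}{2}\bigr)=\varepsilon\prod_j\sgn\sin(2a_jx).
\]
Next, $\sin(2a(x+\pi/4))=\sin(2ax+a\pi/2)=\chi(a)\cos(2ax)$ for odd $a$. Replacing $x$ by $x+\pi/4$ therefore turns the right-hand side into $\varepsilon^2F(2x)=F(2x)$. This gives the multiplicative identity
\[
F\bigl(x+\tfrac{\pi}{4}\bigr)F\bigl(x+\tfrac{3\pi}{4}\bigr)=F(2x)
\]
almost everywhere. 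Equivalently, $F(x)F(x+\pi/2)=F(2x-\pi/2)$.

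\emph{Step 3 (averaging).} Set $\alpha=F(x)$ and $\beta=F(x+\pi/2)$. Translation invariance of $\E$ gives $\E\alpha=\E\beta=\mu$. Integer dilation together with translation invariance gives $\E[\alpha\beta]=\E F(2x-\pi/2)=\mu$. Since $\alpha,\beta\in\{\pm1\}$ almost everywhere, $(1+\alpha)(1+\beta)\ge0$ pointwise. Taking expectations yields
\[
0\le\E\bigl[(1+\alpha)(1+\beta)\bigr]=1+\E\alpha+\E\beta+\E[\alpha\beta]=1+3\mu,
\]
that is, $\mu\ge-1/3$.

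\emph{Main obstacle.} The only delicate point is the sign bookkeeping in Steps 1–2. One must check that the factor $\prod_j(-\chi(a_j))$ produced by the translation and the factor $\prod_j\chi(a_j)$ produced by the quarter-shift after doubling combine to $\varepsilon^2=1$. This is exactly where the hypotheses that all $a_j$ are odd and that the number of factors is even are used; for an odd number of factors a stray sign $-1$ would survive. Everything else is routine, since the identities hold off the finite zero sets, which are null.
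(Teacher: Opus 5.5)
Your proof is correct and is essentially the paper's argument: you derive the multiplicative identity $F(x)F(x+\pi/2)=F(2x\mp\pi/2)$ and then expand $0\le\E[(1+F(x))(1+F(x+\pi/2))]=1+3\mu$. Your $-\pi/2$ and the paper's $+\pi/2$ agree because $F$ is $\pi$-periodic for an even number of odd frequencies.

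The paper avoids your global sign bookkeeping by checking the identity factor by factor: $s(ax)s(a(x+\pi/2))=s(a(2x+\pi/2))$, since both sides equal $-\chi(a)\sgn\sin(2ax)$. So the evenness of the number of factors is not really what makes the identity work. For an odd count one simply has $\mu=0$.
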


\begin{proof}
For odd $a$ and almost every $x$,
\begin{equation}\label{eq:doubling}
s(ax)s\bigl(a(x+\pi/2)\bigr)=s\bigl(a(2x+\pi/2)\bigr).
\end{equation}
Both sides equal $-\chi(a)\sgn\sin(2ax)$.
Let $F(x)=\prod_{j=1}^{2k}s(a_jx)$ and $Q=\E F$. Then
\[
F(x)F(x+\pi/2)=F(2x+\pi/2),
\]
so invariance under translation and doubling yields
\[
\E F(x)=\E F(x+\pi/2)=\E[F(x)F(x+\pi/2)]=Q.
\]
Therefore
\[
0\le\E[(1+F(x))(1+F(x+\pi/2))]=1+3Q.\qedhere
\]
\end{proof}

\begin{corollary}\label{cor:S}
For four odd positive integer frequencies,
\[
P\ge\frac18\left(\frac23+S\right).
\]
In particular, $P\le1/33$ implies $S\le-14/33$.
\end{corollary}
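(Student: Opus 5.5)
The corollary follows by combining Lemma~\ref{lem:expansion} with Proposition~\ref{thm:Q}. For four odd positive integer frequencies $A=(a_1,a_2,a_3,a_4)$, Lemma~\ref{lem:expansion} gives the exact identity
\[
8P(A)=1+S(A)+Q(A),
\]
where $Q(A)=\E\prod_{j=1}^4 s(a_jx)$ is the fourth-order correlation. Proposition~\ref{thm:Q} with $k=2$ gives $Q(A)\ge-1/3$ for every choice of four odd frequencies. Substituting this into the identity yields
\[
8P(A)\ge1-\frac13+S(A)=\frac23+S(A),
\]
which is the first claim after dividing by $8$.

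For the second claim, assume $P(A)\le1/33$. Then
\[
\frac23+S(A)\le8P(A)\le\frac8{33},
\]
so
\[
S(A)\le\frac8{33}-\frac{22}{33}=-\frac{14}{33}.
\]

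The argument applies to lists as well as sets, but the corollary is only used for distinct frequencies anyway. Proposition~\ref{thm:Q} requires all frequencies to be odd, which is exactly the hypothesis here. There is no real obstacle: the only substantive input, the uniform lower bound on the fourth-order term, is already supplied by Proposition~\ref{thm:Q}. All that remains is checking the arithmetic $1-1/3=2/3$ and $8/33-2/3=-14/33$.
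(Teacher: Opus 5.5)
Your proof is correct and follows the paper's route exactly: you substitute $Q(A)\ge-1/3$ from Proposition~\ref{thm:Q} (with $k=2$) into the identity $8P=1+S+Q$ of Lemma~\ref{lem:expansion}. The second claim is then the arithmetic $8/33-2/3=-14/33$.
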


\section{Pairs of short ratio}\label{sec:bernoulli}

We compute $P$ for two $3$-pairs and for a $3$-pair together with a pair
of ratio $7$ or $11$. These are the ratios corresponding to the three
most negative pair correlations, and the formulas will be used in both
parity patterns.
The common-sign set of a $3$-pair is a single arc modulo its period,
so the calculations reduce to correlations of interval indicators.

Let $B(t)=B_2(\{t\})$, where
\begin{equation}\label{eq:bernoulli}
B_2(u)=u^2-u+\frac16,
\qquad
\sum_{k\ge1}\frac{\cos(2\pi kt)}{k^2}=\pi^2B(t).
\end{equation}
For pairs in ratio $3$, we need only the values
\begin{equation}\label{eq:Btable}
\renewcommand{\arraystretch}{1.2}
\begin{array}{c*{6}{c}}
\toprule
u&0&1/6&1/3&1/2&2/3&5/6\\
\midrule
36B(u)&6&1&-2&-3&-2&1\\
\bottomrule
\end{array}
\end{equation}
Let $S_0(y)=\one\{\cos y>1/2\}$. Then
\[
\widehat S_0(0)=\E S_0=\frac13,
\qquad
\widehat S_0(k)=\frac{\sin(\pi k/3)}{\pi k},\quad k\ne0.
\]
Since
\[
\cos y\cos3y=\cos^2y\,(4\cos^2y-3)>0
\quad\Longleftrightarrow\quad \cos2y>\frac12
\]
almost everywhere, we have
\begin{equation}\label{eq:s3}
s(y)s(3y)=2S_0(2y)-1.
\end{equation}
Thus $S_0(2gx)$ is the common-sign indicator of $(g,3g)$.
We also have
\[
\one\{\cos y>0,\cos3y>0\}=\one\{|y|<\pi/6\},
\quad|y|<\pi.
\]

\subsection{Two pairs in ratio three}

\begin{proposition}\label{prop:doubletriple}
Let $g,h\in\N$ be coprime, with $g,3g,h,3h$ distinct.
\begin{enumerate}[label=\textup{(\alph*)}]
\item If $g,h$ are odd, then
\begin{equation}\label{eq:dt-odd}
P(g,3g,h,3h)
=\frac1{18}+\frac2{gh}
\left[B\left(\frac{g-h}{12}\right)-B\left(\frac{g+h}{12}\right)\right].
\end{equation}
Moreover, $P(g,3g,h,3h)\ge1/33$, with equality precisely when
$\{g,h\}=\{1,11\}$.
\item If $g,h$ have different parity, then
\begin{equation}\label{eq:dt-mixed}
P(g,3g,h,3h)
=\frac1{18}+\frac1{2gh}
\left[B\left(\frac{g-h}{6}\right)-B\left(\frac{g+h}{6}\right)\right].
\end{equation}
Moreover, $P(g,3g,h,3h)\ge1/24$, with equality precisely when
$\{g,h\}=\{1,4\}$.
\end{enumerate}
\end{proposition}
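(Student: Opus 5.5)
The plan is to compute $P(g,3g,h,3h)$ exactly as a correlation of two interval indicators, and then minimize the resulting closed formula by a residue-class analysis. By the identity $\one\{\cos y>0,\cos3y>0\}=\one\{|y|<\pi/6\}$ for $|y|<\pi$, the pair $(g,3g)$ is common-positive exactly when $gx\in I:=(-\pi/6,\pi/6)+2\pi\Z$. It is common-negative exactly when $gx\in I+\pi$. The same holds for $h$.

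\emph{Step 1: reduce to two correlations.} Let $I(y)=\one\{y\in I\}$ and $J(y)=I(y+\pi)$. Then
\[
P(g,3g,h,3h)=\E[I(gx)I(hx)]+\E[J(gx)J(hx)].
\]
We have
\[
\widehat I(0)=\tfrac16,\qquad \widehat I(k)=\frac{\sin(\pi k/6)}{\pi k}\ (k\neq0),\qquad \widehat J(k)=(-1)^k\widehat I(k).
\]

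\emph{Step 2: apply Parseval.} Since $\gcd(g,h)=1$, the matching frequencies are $(hm,gm)$ for $m\in\Z$, as in Lemma~\ref{lem:pair}. Hence
\[
P=\sum_{m\in\Z}\bigl(1+(-1)^{(g+h)m}\bigr)\widehat I(hm)\widehat I(gm).
\]
The $m=0$ term gives $2/36=1/18$.

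If $g,h$ are odd, every $m$ contributes twice. The product-to-sum formula
\[
2\sin\alpha\sin\beta=\cos(\alpha-\beta)-\cos(\alpha+\beta),
\]
together with \eqref{eq:bernoulli} applied at $t=(g\mp h)/12$, gives \eqref{eq:dt-odd}.

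If $g,h$ have different parity, only even $m=2m'$ survive. The extra factor $1/4$ from $m^2$ and the doubled angles give \eqref{eq:dt-mixed}. The two cases differ only in bookkeeping of constants, which I would check once carefully.

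\emph{Step 3: the inequalities.} Since $B$ takes values in $[-1/12,1/6]$, each bracket is at least $-1/4$.

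In case (a), this gives $P\ge 1/18-1/(2gh)$, which exceeds $1/33$ as soon as $gh\ge20$, because $1/18-1/33=5/198$. In case (b), it gives $P\ge 1/18-1/(8gh)$, which exceeds $1/24$ once $gh\ge 9$. So only finitely many coprime pairs remain.

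In case (a), $g\pm h$ is even, so all arguments of $B$ are multiples of $1/6$, and table \eqref{eq:Btable} evaluates each bracket exactly. The same holds in case (b), since there the arguments are $(g\pm h)/6$.

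I would organize this step by residues: the bracket depends only on $(g,h)$ modulo $12$ (resp.\ $6$). For each class, the smallest admissible $gh$ is the worst case; here the conditions $g,3g,h,3h$ distinct and coprime remove pairs like $\{1,3\}$. For example, $\{1,11\}$ gives bracket
\[
B(1/6)-B(0)=\tfrac1{36}-\tfrac16=-\tfrac5{36},
\]
so $P=1/18-\tfrac{2}{11}\cdot\tfrac5{36}=1/33$. Similarly, $\{1,4\}$ gives $1/24$.

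The main obstacle is this finite verification together with the equality characterization. I must make sure that each small pair ($\{1,5\},\{1,7\},\{5,7\},\{1,13\},\dots$ in case (a), and $\{1,2\},\{1,4\},\{1,6\},\{2,3\},\dots$ in case (b)) either has a nonnegative bracket or a value strictly above the threshold. For pairs with $gh$ just beyond the crude cutoff, I would instead use the exact bracket value, which is at least $-5/36$ or some other table-determined value. This sharpened bound makes the equality cases $\{1,11\}$ and $\{1,4\}$ strict and unique.
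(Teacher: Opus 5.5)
Your proposal is correct and reaches the same formulas. It differs from the paper's bookkeeping in two places.

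\emph{The identities.} You decompose $P=\E[I(gx)I(hx)]+\E[J(gx)J(hx)]$ with $\widehat J(k)=(-1)^k\widehat I(k)$. This handles both parity cases through the single series $\sum_m\bigl(1+(-1)^{(g+h)m}\bigr)\widehat I(hm)\widehat I(gm)$. The paper instead uses the odd symmetry $P=2\E[A(gx)A(hx)]$ in (a), and the parity splitting \eqref{eq:parity-split} with $S_0$ in (b). Your version is slightly more uniform, and your constants check out.

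\emph{The inequalities.} The paper reads off the exact bracket from the residues of $X=(g-h)/2$ and $Y=(g+h)/2$ modulo $6$. These have opposite parity, and coprimality forbids $3\mid X$ and $3\mid Y$ simultaneously. This gives bracket values $\pm3/36$ or $\pm5/36,\pm1/36$ in (a), and $0,\pm4/36$ in (b), so only $gh\in\{5,7\}$ and $gh=2$ remain to be evaluated.

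Your crude bound from $B\in[-1/12,1/6]$ needs no arithmetic, but it leaves more pairs, and you must actually evaluate them.
\begin{itemize}
\item In (a), these are the coprime odd pairs with $gh<20$. Only $\{1,7\},\{1,9\},\{1,11\},\{1,19\}$ fall below $1/18$, with values $1/21,1/27,1/33,1/19$.
\item In (b), these are $\{1,2\},\{1,4\},\{1,6\},\{2,3\},\{1,8\}$, with values $1/12,1/24,1/18,1/18,1/16$.
\end{itemize}
So the equality cases come out as claimed.

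Two minor remarks. First, at $gh=9$ your crude bound in (b) gives only $P\ge1/24$, not strict inequality; this is harmless because $gh$ is even in mixed parity. Second, the bound ``bracket $\ge-5/36$'' that you allude to does not follow from the table alone, which permits $-9/36$. It requires exactly the paper's parity-and-coprimality argument, though your enumeration route does not need it.
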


\begin{proof}
(a) Let $A(y)=\one\{|y|<\pi/6\}$ on $\R/2\pi\Z$.
By \eqref{eq:odd-symmetry}, $P=2\E[A(gx)A(hx)]$.
Since
\[
\widehat A(0)=\frac16,
\qquad
\widehat A(k)=\frac{\sin(\pi k/6)}{\pi k},\quad k\ne0,
\]
Parseval's identity and \eqref{eq:bernoulli} give
\[
\begin{aligned}
\E[A(gx)A(hx)]
&=\frac1{36}+\sum_{k\ne0}
\frac{\sin(\pi hk/6)\sin(\pi gk/6)}{\pi^2ghk^2}\\
&=\frac1{36}+\frac1{gh}
\left[B\left(\frac{g-h}{12}\right)-B\left(\frac{g+h}{12}\right)\right],
\end{aligned}
\]
which proves \eqref{eq:dt-odd}.

Let
\[
X=\frac{g-h}{2},\qquad Y=\frac{g+h}{2},\qquad
\Delta=B(X/6)-B(Y/6).
\]
The integers $X,Y$ have opposite parity. If $3\mid gh$, their residues
modulo $6$ are $\pm1$ and $\pm2$. Distinctness gives $gh\ge9$, so
\eqref{eq:Btable} yields
\[
\Delta\in\{-3/36,3/36\},
\qquad
P\ge\frac1{18}-\frac1{6gh}\ge\frac1{27}.
\]
If $3\nmid gh$, exactly one of $X,Y$ is divisible by $3$. The residue
pairs modulo $6$ are therefore $\{0,\pm1\}$ or $\{3,\pm2\}$, giving
\[
\Delta\in\{\pm5/36,\pm1/36\},
\qquad
P\ge\frac1{18}-\frac5{18gh}\ge\frac1{33},\quad gh\ge11.
\]
Equality requires $gh=11$. For $gh<11$, the only possible products are
$5$ and $7$, and \eqref{eq:dt-odd} gives
\[
P(1,3,5,15)=\frac1{15},\qquad P(1,3,7,21)=\frac1{21}.
\]
For $\{g,h\}=\{1,11\}$, we have $X\equiv\pm1$ and $Y\equiv0\pmod6$,
hence
\[
\Delta=-\frac5{36},\qquad
P=\frac1{18}-\frac5{198}=\frac1{33}.
\]

(b) Equations~\eqref{eq:parity-split} and \eqref{eq:s3} give
\[
P=\frac12\E[S_0(2gx)S_0(2hx)]
=\frac12\E[S_0(gy)S_0(hy)].
\]
Parseval's identity proves \eqref{eq:dt-mixed}. Since $g\pm h$ are odd,
\eqref{eq:Btable} gives
\[
P=
\begin{cases}
\dfrac1{18},&3\mid gh,\\[8pt]
\dfrac1{18}\pm\dfrac1{18gh},&3\nmid gh.
\end{cases}
\]
In the second case, $g-h,g+h$ have residues $3$ and $\pm1$ modulo $6$.
Thus
\[
P\ge\frac1{18}-\frac1{18gh}\ge\frac1{24},\quad gh\ge4.
\]
Equality requires $gh=4$ and the negative sign, which follows from
$B(1/2)-B(5/6)=-4/36$. The remaining product $gh=2$ gives
$P(1,2,3,6)=1/12$.
\end{proof}

\subsection{The kernels for ratios seven and eleven}

For $v\in\{7,11\}$, define
\[
E_v(y)=\one\{s(y/2)=s(vy/2)\},\qquad
m_v=\E E_v=\frac{v-1}{2v},\qquad
\beta_v=\frac{m_v}{6}.
\]
The function $E_v$ is $2\pi$-periodic, and $E_v(2hx)$ is the
common-sign indicator of $(h,hv)$. The value of $m_v$ follows from
Lemma~\ref{lem:pair}; in particular,
\[
\beta_7=\frac1{14},\qquad \beta_{11}=\frac5{66}.
\]
Since $s(y/2)=1$ on $(-\pi,\pi)$ and $v\equiv3\pmod4$,
\begin{equation}\label{eq:Earcs}
E_v=\sum_{|j|\le J}\one_{\{|y-4\pi j/v|<\pi/v\}},
\qquad J=\frac{v-3}{4},
\end{equation}
where the arcs are viewed in $\R/2\pi\Z$. Consequently, for $n\ne0$,
\begin{equation}\label{eq:Ev-fourier}
\widehat E_v(n)
=\frac{\sin(\pi n/v)}{\pi n}
  \sum_{|j|\le J}e^{-4\pi ijn/v}
=\frac{(-1)^{n+1}}{2\pi n}\tan\frac{\pi n}{v}.
\end{equation}
For $v\nmid n$, the last equality follows from
\[
\sum_{|j|\le J}e^{-4\pi ijn/v}
=\frac{\sin((2J+1)2\pi n/v)}{\sin(2\pi n/v)}
=\frac{(-1)^{n+1}}{2\cos(\pi n/v)};
\]
for $v\mid n$, both sides of \eqref{eq:Ev-fourier} vanish.
We shall also use the shift identity
\begin{equation}\label{eq:Ev-shift}
(-1)^n\widehat E_v(gn)
=\frac{g+v}{g}\widehat E_v((g+v)n),
\quad g\in2\N-1,\ n\in\Z\setminus\{0\},
\end{equation}
which follows from \eqref{eq:Ev-fourier}, since $g+v$ is even and
$\tan(\pi(g+v)n/v)=\tan(\pi gn/v)$.

For the correlation between $S_0(r\,\cdot)$ and $E_v$, put
\begin{equation}\label{eq:Kdef}
K_v(0)=0,\qquad
K_v(r)=\frac r2\left(\E[S_0(ry)E_v(y)]-\frac{m_v}{3}\right),
\quad r\in\N.
\end{equation}
The next lemma gives the period and values of $K_v$.

\begin{lemma}\label{lem:Kv}
For $v\in\{7,11\}$ and $r\in\N$,
\begin{equation}\label{eq:Kclosed}
K_v(r)=\frac12\sum_{j=-J}^{J}
\left[B\left(\frac16+\frac{(4j-1)r}{2v}\right)
-B\left(\frac16+\frac{(4j+1)r}{2v}\right)\right].
\end{equation}
On the nonnegative integers, $K_v$ satisfies
\begin{equation}\label{eq:Ksymmetry}
K_v(r+2v)=K_v(r),\qquad K_v(v)=0,\qquad
K_v(2v-r)=-K_v(r),\quad 0\le r\le2v.
\end{equation}
Its remaining values are
\[
\renewcommand{\arraystretch}{1.2}
\begin{array}{c*{10}{c}}
\toprule
r&1&2&3&4&5&6&7&8&9&10\\
\midrule
21K_7(r)&0&0&4&4&-2&1&0&&&\\
33K_{11}(r)&0&-2&4&-2&10&10&-4&-1&0&0\\
\bottomrule
\end{array}
\]
In particular,
\begin{equation}\label{eq:Kbounds}
|K_7(r)|\le\frac4{21},\qquad |K_{11}(r)|\le\frac{10}{33}.
\end{equation}
\end{lemma}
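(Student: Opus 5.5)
The closed form \eqref{eq:Kclosed} will come from Parseval, exactly as in Proposition~\ref{prop:doubletriple}. The function $S_0(r\,\cdot)$ has Fourier coefficients $\widehat S_0(k)$ at the indices $rk$ and zero elsewhere, and $\E S_0=1/3$. Hence
\[
\E[S_0(ry)E_v(y)]-\frac{m_v}{3}=\sum_{k\ne0}\widehat S_0(k)\,\overline{\widehat E_v(rk)}.
\]
I will not use the tangent form \eqref{eq:Ev-fourier}; instead I will write $\widehat E_v(rk)$ through the arc decomposition \eqref{eq:Earcs}:
\[
\widehat E_v(rk)=\sum_{|j|\le J}\frac{\sin(\pi rk/v)}{\pi rk}\,e^{-4\pi ijrk/v}.
\]
Multiplying by $\widehat S_0(k)=\sin(\pi k/3)/(\pi k)$ and the prefactor $r/2$, each $j$ contributes
\[
\frac{1}{2\pi^2}\sum_{k\ne0}\frac{\sin(\pi k/3)\sin(\pi rk/v)\cos(4\pi jrk/v)}{k^2}.
\]
The imaginary parts cancel under $k\mapsto-k$. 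I then expand the triple product of trigonometric factors into four cosines using product-to-sum. The arguments are $2\pi k\bigl(\pm\tfrac16\pm\tfrac{r}{2v}+\tfrac{2jr}{v}\bigr)$, with signs chosen so that the total is $\sin\cdot\sin$. Applying \eqref{eq:bernoulli} term by term turns each cosine series into a value of $B$. Since $B$ is even, the four terms pair up by symmetry $j\mapsto-j$ (or equivalently $u\mapsto-u$) into
\[
\tfrac12\bigl[B\bigl(\tfrac16+\tfrac{(4j-1)r}{2v}\bigr)-B\bigl(\tfrac16+\tfrac{(4j+1)r}{2v}\bigr)\bigr].
\]
The main bookkeeping is to check the normalizing constant: the factor $2\pi^2$, the factor $\pi^2$ from \eqref{eq:bernoulli}, and the $1/2$ from each product-to-sum step must combine to the overall factor $1/2$.

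The symmetries \eqref{eq:Ksymmetry} will be read off from \eqref{eq:Kclosed}. Replacing $r$ by $r+2v$ shifts every argument by the integer $4j\pm1$, so periodicity follows from $B$ having period $1$. For $r=v$ the arguments are $\tfrac16+\tfrac{4j\mp1}{2}$, which both reduce to $\tfrac16+\tfrac12$ modulo $1$, so $K_v(v)=0$. For $2v-r$, the arguments become $\tfrac16-\tfrac{(4j\mp1)r}{2v}$ modulo $1$. Substituting $j\mapsto-j$ turns these into $\tfrac16+\tfrac{(4j\pm1)r}{2v}$ with the roles of the two terms exchanged, and evenness of $B$ is not even needed because the reindexing alone swaps them. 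The swap produces the sign $-1$, giving $K_v(2v-r)=-K_v(r)$.

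The tabulated values for $1\le r\le v-1$ will be verified by direct finite evaluation of \eqref{eq:Kclosed}. All arguments are rationals with denominator dividing $6v$, so each $B$ value is an explicit rational, and the computation involves only $2J+1\le5$ terms per $r$. Multiplying through by $21$ or $33$ should give the listed integers. The remaining residues are covered by \eqref{eq:Ksymmetry}, and the bounds \eqref{eq:Kbounds} follow by taking the maxima of the table. I expect this arithmetic verification, not the derivation, to be the real obstacle, since a single sign slip would change the table. As an independent check I would recompute a few entries, for example $r=3$ and $r=5$ for $v=11$, directly as interval-overlap measures of $\{\cos 3y>1/2\}$ with the arcs in \eqref{eq:Earcs}.
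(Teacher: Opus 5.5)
Your proposal is correct and follows the paper's proof. The closed form \eqref{eq:Kclosed} comes from Parseval with the arc decomposition \eqref{eq:Earcs}, pairing $j$ with $-j$, product-to-sum and \eqref{eq:bernoulli}; the table comes from direct finite evaluation (the entries check out), with \eqref{eq:Kbounds} read off its maxima. The only deviation is in \eqref{eq:Ksymmetry}: you derive it from the closed form by integer shifts of the arguments and the reindexing $j\mapsto-j$, whereas the paper reads periodicity, oddness in $r$, and vanishing at $r=v$ off the tangent form $r\widehat E_v(rn)=(-1)^{rn+1}\tan(\pi rn/v)/(2\pi n)$; both routes are valid.
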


\begin{proof}
Parseval's identity and \eqref{eq:Earcs} give
\[
\begin{aligned}
\E[S_0(ry)E_v(y)]-\frac{m_v}{3}
&=\sum_{n\ne0}\widehat S_0(n)\widehat E_v(rn)\\
&=\frac1r\sum_{j=-J}^{J}
\left[B\left(\frac16+\frac{(4j-1)r}{2v}\right)
-B\left(\frac16+\frac{(4j+1)r}{2v}\right)\right].
\end{aligned}
\]
For the second equality, pair $j$ with $-j$, use
$2\sin\alpha\sin\beta=\cos(\alpha-\beta)-\cos(\alpha+\beta)$,
and apply \eqref{eq:bernoulli}. Multiplication by $r/2$ proves
\eqref{eq:Kclosed}.

By \eqref{eq:Ev-fourier},
\[
r\widehat E_v(rn)
=\frac{(-1)^{rn+1}}{2\pi n}\tan\frac{\pi rn}{v}.
\]
The right-hand side is $2v$-periodic and odd in $r$, and it vanishes
at $r=v$. This proves \eqref{eq:Ksymmetry}. Substituting in
\eqref{eq:Kclosed} gives the table and \eqref{eq:Kbounds}.
\end{proof}

For $h\in\N$, let
\[
T_hf(y)=\frac1h\sum_{j=0}^{h-1}f\left(\frac{y+2\pi j}{h}\right).
\]
Since $\widehat{T_hf}(n)=\widehat f(hn)$ and
$\widehat S_0(hn)=\sin(\pi hn/3)/(\pi hn)$ for $n\ne0$,
\begin{equation}\label{eq:ThS0}
T_hS_0=
\begin{cases}
\dfrac13,&3\mid h,\\[8pt]
\dfrac13+\dfrac{\chi_6(h)}h\left(S_0-\dfrac13\right),
&h\text{ odd},\ 3\nmid h,\\[8pt]
\dfrac13+\dfrac{\sigma(h)}h\left(S_0(\cdot+\pi)-\dfrac13\right),
&h\text{ even},\ 3\nmid h,
\end{cases}
\end{equation}
where $\sigma(h)=-1$ for $h\equiv2\pmod6$ and $\sigma(h)=1$ for
$h\equiv4\pmod6$. Indeed,
\[
\sin\frac{\pi hn}{3}=
\begin{cases}
\chi_6(h)\sin(\pi n/3),&h\text{ odd},\ 3\nmid h,\\[3pt]
-(-1)^n\sin(\pi n/3),&h\equiv2\pmod6,\\[3pt]
(-1)^n\sin(\pi n/3),&h\equiv4\pmod6,
\end{cases}
\]
and multiplication of Fourier coefficients by $(-1)^n$ translates
$S_0$ by $\pi$.

\begin{proposition}\label{prop:shortmixed}
Let $v\in\{3,7,11\}$, and let $g,h$ be coprime positive integers of
different parity. Then
\[
P(g,3g,h,hv)\ge\frac1{24},
\]
with equality if and only if
$\{g,3g,h,hv\}=\{1,3,4,12\}$. For $v\in\{7,11\}$, the exact formula is
\begin{equation}\label{eq:shortmixed}
P(g,3g,h,hv)=
\begin{cases}
\beta_v,&3\mid h,\\[8pt]
\beta_v+\dfrac{\chi_6(h)K_v(g)}{gh},&h\text{ odd},\ 3\nmid h,\\[8pt]
\beta_v+\dfrac{\sigma(h)K_v(g+v)}{gh},&h\text{ even},\ 3\nmid h.
\end{cases}
\end{equation}
\end{proposition}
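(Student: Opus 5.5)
The case $v=3$ is Proposition~\ref{prop:doubletriple}(b): $g,h$ are coprime of different parity and the four frequencies are distinct, so $P(g,3g,h,3h)\ge1/24$, with equality exactly for $\{g,h\}=\{1,4\}$, i.e.\ the set $\{1,3,4,12\}$. For $v\in\{7,11\}$ I would first prove \eqref{eq:shortmixed} and then read off the bound from Lemma~\ref{lem:Kv}.

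\emph{Deriving the formula.} Since $v$ is odd, $h$ and $hv$ have the same parity, while $g$ and $3g$ have the other parity. So the parity classes are exactly $\{g,3g\}$ and $\{h,hv\}$. Lemma~\ref{lem:dilation} gives $P=\frac12\Pr(G_{\{g,3g\}}\cap G_{\{h,hv\}})$. By \eqref{eq:s3} and the definition of $E_v$, this is $\frac12\E[S_0(2gx)E_v(2hx)]=\frac12\E[S_0(gy)E_v(hy)]$. Now condition on $z=hy$. The preimages are $y=(z+2\pi j)/h$, and because $\gcd(g,h)=1$ the map $j\mapsto gj$ permutes $\Z/h\Z$. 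Hence
\[
\E[S_0(gy)E_v(hy)]=\E[(T_hS_0)(gz)E_v(z)].
\]
I then insert \eqref{eq:ThS0}, case by case.
\begin{itemize}
\item If $3\mid h$, then $T_hS_0=\tfrac13$, so $P=\tfrac12\cdot\tfrac13 m_v=\beta_v$.
\item If $h$ is odd and $3\nmid h$, the extra term is $\frac{\chi_6(h)}{h}\E[(S_0(gz)-\frac13)E_v(z)]=\frac{\chi_6(h)}{h}\cdot\frac{2K_v(g)}{g}$ by \eqref{eq:Kdef}. Halving gives the stated formula.
\item If $h$ is even, then $g$ is odd, and the extra term involves $\E[(S_0(gz+\pi)-\frac13)E_v(z)]$. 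In Parseval form this is $\sum_{n\ne0}(-1)^n\widehat S_0(n)\widehat E_v(gn)$. The shift identity \eqref{eq:Ev-shift} turns it into $\frac{g+v}{g}\sum_{n\ne0}\widehat S_0(n)\widehat E_v((g+v)n)=\frac{g+v}{g}\cdot\frac{2K_v(g+v)}{g+v}=\frac{2K_v(g+v)}{g}$. Multiplying by $\sigma(h)/h$ and halving gives the third line of \eqref{eq:shortmixed}.
\end{itemize}

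\emph{Deriving the bound.} From \eqref{eq:shortmixed} and \eqref{eq:Kbounds}, $P\ge\beta_v-|K_v|/(gh)$. For $v=7$ we have $\beta_7-\frac1{24}=\frac5{168}$ and $|K_7|\le\frac{32}{168}$, so $P>1/24$ whenever $gh\ge7$. For $v=11$ we have $\beta_{11}-\frac1{24}=\frac9{264}$ and $|K_{11}|\le\frac{80}{264}$, so $P>1/24$ whenever $gh\ge9$. This leaves finitely many coprime mixed-parity pairs with $gh\le8$, subject to distinctness of $g,3g,h,hv$.

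For each of these I would evaluate \eqref{eq:shortmixed} exactly. The needed values come from $K_v(g)$ or $K_v(g+v)$, reduced with \eqref{eq:Ksymmetry} and the table. The only candidates that can come close to $1/24$ are those where the sign $\chi_6(h)$ or $\sigma(h)$ makes the correction negative and $|K_v|$ is large, for example $g=3,h=2$ or $g=5,h=1$ with $v=7$. I expect this finite verification to be the main, if routine, obstacle. The point to establish is that all of these values strictly exceed $1/24$, so that $v=7,11$ produce no equality case. Equality therefore comes only from $v=3$, i.e.\ from $\{1,3,4,12\}$.
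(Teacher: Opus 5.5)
Your proposal is correct and follows the paper's proof essentially step for step. The steps are the parity split, conditioning on $z=hy$ through $T_hS_0$, and the shift identity \eqref{eq:Ev-shift} for even $h$. Then \eqref{eq:Kbounds} reduces to the coprime mixed-parity pairs with $gh\in\{2,4,6\}$ (and $gh=8$ when $v=11$), which the paper settles by tabulating \eqref{eq:shortmixed}; the smallest value is $1/22$, at $(g,h)=(2,1)$, $v=11$.

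Two small slips in your heuristic remark: $(g,h)=(5,1)$ is not of mixed parity, and $(3,2)$ with $v=7$ actually has a positive correction, giving $P=13/126$.
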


\begin{proof}
The case $v=3$ is Proposition~\ref{prop:doubletriple}(b). Assume henceforth
that $v\in\{7,11\}$.
The frequencies are distinct by parity. Equations~\eqref{eq:parity-split}
and \eqref{eq:s3} give
\[
P=\frac12\E[S_0(2gx)E_v(2hx)]
=\frac12\E[S_0(gy)E_v(hy)].
\]
Condition on $z=hy$. Since $\gcd(g,h)=1$, multiplication by $g$
permutes the preimages, and
\[
P=\frac12\E[E_v(z)(T_hS_0)(gz)].
\]
If $3\mid h$, \eqref{eq:ThS0} gives $P=\beta_v$.
For odd $h$ with $3\nmid h$, it gives
\[
P=\frac{m_v}{6}+\frac{\chi_6(h)}{2h}
\left(\E[E_v(z)S_0(gz)]-\frac{m_v}{3}\right)
=\beta_v+\frac{\chi_6(h)K_v(g)}{gh}.
\]
For even $h$ with $3\nmid h$, $g$ is odd, and
\eqref{eq:Ev-shift} gives
\[
\begin{aligned}
\E[E_v(z)S_0(gz+\pi)]-\frac{m_v}{3}
&=\sum_{n\ne0}\widehat S_0(n)(-1)^n\widehat E_v(gn)\\
&=\frac{g+v}{g}\sum_{n\ne0}
\widehat S_0(n)\widehat E_v((g+v)n)\\
&=\frac{2K_v(g+v)}{g}.
\end{aligned}
\]
Together with \eqref{eq:ThS0}, this proves \eqref{eq:shortmixed}.

By \eqref{eq:Kbounds},
\[
P\ge\beta_v-\frac{M_v}{gh},
\qquad M_7=\frac4{21},\quad M_{11}=\frac{10}{33}.
\]
Thus $P\ge1/21$ for $v=7$, $gh\ge8$, and $P\ge1/22$ for $v=11$,
$gh\ge10$. The remaining coprime pairs of different parity have
$gh\in\{2,4,6\}$, or $gh=8$ when $v=11$. Formula
\eqref{eq:shortmixed} gives the following values, all greater than $1/24$:
\[
\renewcommand{\arraystretch}{1.7}
\begin{array}{ccc@{\qquad}ccc}
\toprule
(g,h)&v=7&v=11&(g,h)&v=7&v=11\\
\midrule
(1,2)&\dfrac2{21}&\dfrac5{66}&(2,1)&\dfrac1{14}&\dfrac1{22}\\[4pt]
(1,4)&\dfrac5{84}&\dfrac5{66}&(4,1)&\dfrac5{42}&\dfrac2{33}\\[4pt]
(1,6)&\dfrac1{14}&\dfrac5{66}&(6,1)&\dfrac5{63}&\dfrac{25}{198}\\[4pt]
(2,3)&\dfrac1{14}&\dfrac5{66}&(3,2)&\dfrac{13}{126}&\dfrac7{99}\\[4pt]
(1,8)&\cdot&\dfrac5{66}&(8,1)&\cdot&\dfrac{19}{264}\\
\bottomrule
\end{array}
\]
\end{proof}

\begin{proposition}\label{prop:shortodd}
Let $v\in\{7,11\}$, and let $g,h$ be odd positive integers with
\[
\gcd(g,h)=\gcd(g,v)=1,\qquad 3\nmid h,
\]
and $g,3g,h,hv$ distinct. Then
\begin{equation}\label{eq:shortodd}
P(g,3g,h,hv)
=\beta_v+\frac{\chi_6(h)K_v(g)+\chi(gh)\beta_v}{gh}>\frac1{33}.
\end{equation}
\end{proposition}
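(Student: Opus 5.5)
The plan is to split the four-frequency common-sign indicator according to the two pairs $(g,3g)$ and $(h,hv)$. Put $h_1=h_{(g,3g)}$ and $h_2=h_{(h,hv)}$ in the notation of \eqref{eq:split}. The quadruple has a common sign exactly when both pairs have a common sign and these two signs agree, so the indicator is $\tfrac12(|h_1||h_2|+h_1h_2)$. This gives
\[
P(g,3g,h,hv)=\tfrac12\E[S_0(2gx)E_v(2hx)]+\tfrac12\E[h_1(x)h_2(x)].
\]
The first term is handled exactly as in the proof of Proposition~\ref{prop:shortmixed}. Substitute $y=2x$, condition on $z=hy$ using $\gcd(g,h)=1$, and apply \eqref{eq:ThS0} in the case $h$ odd, $3\nmid h$. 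That derivation never used the parity of $g$, so it should give $\beta_v+\chi_6(h)K_v(g)/(gh)$. It therefore remains to show
\[
\E[h_1h_2]=\frac{2\chi(gh)\beta_v}{gh}.
\]

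For the second term, write $h_1(x)=\varphi(gx)$ and $h_2(x)=\psi(hx)$. Here $\varphi=s\cdot S_0(2\,\cdot)$ is $\one\{|y|<\pi/6\}-\one\{|y-\pi|<\pi/6\}$, and $\psi=s\cdot E_v(2\,\cdot)$ is the analogous signed sum of the arcs in \eqref{eq:Earcs}, halved and antisymmetrised under $y\mapsto y+\pi$. Both functions have Fourier support on odd indices. Since $\gcd(g,h)=1$, Parseval reduces to
\[
\E[h_1h_2]=\sum_{n\text{ odd}}\widehat\varphi(hn)\overline{\widehat\psi(gn)}.
\]
I would then prove two transfer rules, one for each factor.

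For $\varphi$, the coefficient is $\widehat\varphi(k)=2\sin(\pi k/6)/(\pi k)$ for odd $k$. When $h$ is odd with $3\nmid h$ and $n$ is odd, one has $\sin(\pi hn/6)=\epsilon(h)\sin(\pi n/6)$ with a sign $\epsilon(h)$ depending on $h\bmod 12$. Hence $\widehat\varphi(hn)=\epsilon(h)\widehat\varphi(n)/h$.

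For $\psi$, I would compute $\widehat\psi$ on odd indices from \eqref{eq:Earcs} as a tangent (or secant) expression analogous to \eqref{eq:Ev-fourier}. Then I would use $\gcd(g,v)=1$ and the periodicity of the tangent, as in \eqref{eq:Ev-shift}, to obtain $\widehat\psi(gn)=\epsilon'(g)\widehat\psi(n)/g$ for a sign $\epsilon'(g)$.

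Combining the two rules gives $\E[h_1h_2]=\epsilon(h)\epsilon'(g)\E[\varphi\psi]/(gh)$. The base value is $\E[\varphi\psi]=\E[h_{(1,3)}h_{(1,v)}]=2P(1,3,v)-\tfrac13$, which I would evaluate directly from the arcs. It should equal $2\beta_v=m_v/3$. The remaining point is to check that $\epsilon(h)\epsilon'(g)=\chi(g)\chi(h)$.

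I expect this sign bookkeeping to be the main obstacle: the dependence on $h\bmod12$ and on $g\bmod 4$ (through $v\equiv3\pmod4$) has to collapse to $\chi(gh)$. If the tangent formula makes the rule for $\psi$ awkward, I would instead average over the $g$ preimages as in Lemma~\ref{lem:average}, applied to the odd function $\varphi$. That gives the factor $\chi(g)/g$ directly, with the rule for $\varphi$ supplying $\chi(h)/h$ after absorbing the $\chi_6$-type sign.

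For the inequality, \eqref{eq:Kbounds} gives
\[
P\ge\beta_v-\frac{M_v+\beta_v}{gh},\qquad M_7=\tfrac4{21},\quad M_{11}=\tfrac{10}{33}.
\]
This already exceeds $1/33$ once $gh$ is moderately large: roughly $gh\ge7$ for $v=7$ and $gh\ge9$ for $v=11$. The hypotheses force $g,h$ odd and coprime, $3\nmid h$, $\gcd(g,v)=1$, and distinctness (so $(g,h)\neq(1,1)$). This leaves only finitely many small pairs, such as $(g,h)=(1,5),(3,1),(5,1)$ and a few others. For these I would evaluate \eqref{eq:shortodd} directly from the table in Lemma~\ref{lem:Kv} and check the strict inequality case by case.
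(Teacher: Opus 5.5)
Your decomposition is the same as the paper's. Here $|h_1|,h_1$ are exactly $e_A,f_A$ in its identity $\one[\epsilon_1=\cdots=\epsilon_4]=\frac12e_Ae_B+\frac12f_Af_B$. The first term is treated as in Proposition~\ref{prop:shortmixed}, and your small-case check is the paper's.

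The difference is the cross term, where you take a detour. Since $h_1(x)=\frac12\bigl(s(gx)+s(3gx)\bigr)$ and $h_2(x)=\frac12\bigl(s(hx)+s(hvx)\bigr)$, the paper simply expands $\frac12\E[h_1h_2]=\frac18\bigl[C(g,h)+C(g,hv)+C(3g,h)+C(3g,hv)\bigr]$. The hypotheses make all four cross pairs coprime with odd product, so Lemma~\ref{lem:pair} gives $\frac{\chi(gh)}{8gh}\bigl(1-\frac1v-\frac13+\frac1{3v}\bigr)=\frac{\chi(gh)\beta_v}{gh}$ at once.

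Your transfer-rule route also works, and the sign bookkeeping you flagged as the main obstacle is clean:
\begin{itemize}
\item On odd indices, $\widehat\varphi(k)=\chi(k)/(\pi k)$ for $3\nmid k$ and $\widehat\varphi(3m)=2\chi(m)/(3\pi m)$. So $3\nmid h$ gives $\widehat\varphi(hn)=\chi(h)\widehat\varphi(n)/h$. The sign depends on $h$ only modulo $4$; there is no $\chi_6$-type sign.
\item Likewise $\psi=\frac12\bigl(s+s(v\,\cdot)\bigr)$ has $\widehat\psi(k)=\chi(k)/(\pi k)$ for $v\nmid k$ and $\widehat\psi(vm)=(v-1)\chi(m)/(\pi vm)$. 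So $\gcd(g,v)=1$ gives $\widehat\psi(gn)=\chi(g)\widehat\psi(n)/g$.
\end{itemize}
No tangent formula is needed, and Parseval then gives $\E[h_1h_2]=\frac{\chi(gh)}{gh}\E[\varphi\psi]$.

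There is, however, an error in your base value. $\E[h_{(1,3)}h_{(1,v)}]$ is not $2P(1,3,v)-\frac13$. By \eqref{eq:split}, that quantity equals $\E[h_{(1,3)}s(v\,\cdot)]$, and for $v=7$ it is $\frac27-\frac13=-\frac1{21}$, so your check against $2\beta_7=\frac17$ would fail.

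Both pairs contain frequency $1$, so $h_{(1,3)}h_{(1,v)}$ is the indicator that $s(y),s(3y),s(vy)$ all agree. Hence $\E[\varphi\psi]=P(1,3,v)=\frac14\bigl(1+C(1,3)+C(1,v)+C(3,v)\bigr)=\frac{v-1}{6v}=2\beta_v$, namely $\frac17$ and $\frac5{33}$.

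With this correction your argument is complete. Note, though, that this base computation is the paper's pair-correlation expansion at $g=h=1$. The transfer rules therefore add nothing beyond what Lemma~\ref{lem:pair} already gives directly for general $g,h$ under the stated coprimality hypotheses.
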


\begin{proof}
For $\epsilon_1,\ldots,\epsilon_4\in\{\pm1\}$, let
\[
e_A=\one[\epsilon_1=\epsilon_2],\qquad
f_A=\frac{\epsilon_1+\epsilon_2}{2},
\]
and define $e_B,f_B$ similarly using $\epsilon_3,\epsilon_4$. Then
\[
\one[\epsilon_1=\epsilon_2=\epsilon_3=\epsilon_4]
=\frac12e_Ae_B+\frac12f_Af_B.
\]
Apply this identity to
\[
(\epsilon_1,\epsilon_2,\epsilon_3,\epsilon_4)
=(s(gx),s(3gx),s(hx),s(hvx)).
\]
As in the proof of Proposition~\ref{prop:shortmixed}, the first term has
expectation
\[
\frac12\E[S_0(gy)E_v(hy)]
=\beta_v+\frac{\chi_6(h)K_v(g)}{gh}.
\]
For the second term, each cross-pair is coprime by the hypotheses and
$3\nmid v$. Lemma~\ref{lem:pair} therefore gives
\[
\begin{aligned}
\frac18\bigl[C(g,h)+C(g,hv)+C(3g,h)+C(3g,hv)\bigr]
&=\frac{\chi(gh)}{8gh}
\left(1-\frac1v-\frac13+\frac1{3v}\right)=\frac{\chi(gh)\beta_v}{gh},
\end{aligned}
\]
proving \eqref{eq:shortodd}. By \eqref{eq:Kbounds},
\[
\begin{aligned}
P(g,3g,h,7h)&\ge\frac1{14}-\frac{11}{42gh},\\
P(g,3g,h,11h)&\ge\frac5{66}-\frac{25}{66gh}.
\end{aligned}
\]
These exceed $1/33$ for $gh\ge7$ and $gh\ge9$, respectively. The
remaining pairs are $(3,1),(1,5),(5,1)$, and, for $v=11$,
$(1,7),(7,1)$. Formula~\eqref{eq:shortodd} gives
\[
\renewcommand{\arraystretch}{1.6}
\begin{array}{c*{5}{c}}
\toprule
(g,h)&(3,1)&(1,5)&(5,1)&(1,7)&(7,1)\\
\midrule
v=7&\dfrac19&\dfrac3{35}&\dfrac1{15}&\cdot&\cdot\\[6pt]
v=11&\dfrac1{11}&\dfrac1{11}&\dfrac5{33}&\dfrac5{77}&\dfrac1{21}\\
\bottomrule
\end{array}
\]
all greater than $1/33$.
\end{proof}

\section{Odd frequencies}\label{sec:odd}

For an odd quadruple $A$, Corollary~\ref{cor:S} and
\eqref{eq:initial-arc} give
\begin{equation}\label{eq:lowconseq}
P(A)\le\frac1{33}
\quad\Longrightarrow\quad
S(A)\le-\frac{14}{33},\qquad \max A\ge33.
\end{equation}
Together with an interval calculation for one
exceptional configuration, the first constraint will force a $3$-pair
whenever $P(A)\le1/33$. With exactly one such pair, it constrains the
reduced ratios, while the component
estimate excludes all but finitely many choices of the remaining
frequencies.
Two disjoint $3$-pairs are already covered by
Proposition~\ref{prop:doubletriple}.

\subsection{No pair in ratio three}\label{sec:notriple}

\begin{proposition}\label{prop:notriple}
Let $a_1,\dots,a_4$ be distinct odd positive integers. If no two form a
$3$-pair, then $P\ge29/924>1/33$.
\end{proposition}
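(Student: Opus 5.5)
We start from Corollary~\ref{cor:S}, $P\ge\frac18(\frac23+S)$. A short computation shows that $P\ge 29/924$ follows once
\[
S\ge-\frac23+\frac{8\cdot29}{924}=-\frac{32}{77}.
\]
So the plan is to prove $S\ge-32/77$ for every odd quadruple without a $3$-pair, apart from one explicitly identified configuration, and to treat that configuration by a direct interval computation.

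Split $A$ by residue modulo $4$ into $r$ elements $\equiv1$ and $4-r$ elements $\equiv3$. By \eqref{eq:pairodd}, negative correlations occur exactly on the cross pairs, so there are $r(4-r)\le4$ of them. Each has the form $-1/(uv)$ with $u:v$ reduced and $uv\equiv3\pmod4$. Without a $3$-pair we have $uv\ge7$, and the admissible products are $7,11,15,19,\dots$. All within-class correlations are positive, and I will keep them as a gain.

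If $r\in\{0,4\}$, then $S>0$ and there is nothing to prove. If $r\in\{1,3\}$, there are three negative pairs from one element $x$ to $y_1,y_2,y_3$. Ratio $7$ forces $y\in\{x/7,7x\}$, so at most two of the pairs have product $7$. The third has product at least $11$, which gives
\[
S\ge-\frac27-\frac1{11}>-\frac{32}{77}.
\]
The within-class terms only help. Any other distribution of products is even less negative, because two pairs of product $11$ are likewise limited to $x/11,11x$.

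The main case is $r=2$: a $4$-cycle $x_1,y_1,x_2,y_2$ of negative pairs. If each cross ratio is written as a rational $y_j/x_i$, these satisfy the cycle relation
\[
\frac{y_1}{x_1}\cdot\frac{y_2}{x_2}=\frac{y_1}{x_2}\cdot\frac{y_2}{x_1}.
\]
I will enumerate the possible products around the cycle in increasing order of total weight. Four products equal to $7$ are impossible. Three products $7$ force the fourth ratio to be $7^{\pm3}$ or $7^{\pm1}$ with a repeated element, and the resulting gain from the within-class pairs must then be checked. The critical pattern is two products $7$ and two products $11$. The cycle relation then forces, up to dilation and \eqref{eq:tripgcd}, the quadruple $\{1,7,11,77\}$ (or its sign-reversed analogue). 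For it,
\[
S=-\frac27-\frac2{11}+\frac2{77}=-\frac{34}{77},
\]
which falls below the threshold. Every other pattern either has total negative weight at least $-32/77$ outright, for instance $-\frac27-\frac1{11}-\frac2{15}$ or $-\frac1{7}-\frac3{11}$, or acquires enough positive within-class correlation to reach it. I expect this bookkeeping to be the main obstacle. One must check that whenever the negative part is close to $-1/2$, the arithmetic of the cycle forces the two same-class elements to have a small reduced product and hence a positive correlation $\gcd^2/(x_1x_2)$ that compensates. I would organize it by the smallest product $\min uv$ and use the cycle relation to bound the others.

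Finally, the exceptional quadruple $\{1,7,11,77\}$ is handled directly. Using \eqref{eq:odd-symmetry}, we have $P=4|A_+\cap[0,1/2]|$, and I would list the common positive intervals of $\cos(2\pi t),\cos(14\pi t),\cos(22\pi t),\cos(154\pi t)$ explicitly. The target value $29/924$ suggests that this configuration, or the next one in the enumeration, is exactly where the bound is attained; in any case the explicit measure is to be compared with $29/924$. This requires a finite exact computation with denominators dividing $4\cdot77\cdot3$, and it closes the proof.
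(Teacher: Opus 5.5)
You correctly reduce to $S\ge-\frac{32}{77}$ via Corollary~\ref{cor:S}, and your cases $r\in\{0,4\}$ and $r\in\{1,3\}$ are sound. You also correctly show that the product pattern $(7,7,11,11)$ on the $4$-cycle forces a dilate of $\{1,7,11,77\}$. The gap is the rest of the case $r=2$, which is the substance of the proposition and which you defer as ``bookkeeping''.

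Your sketch of that case rests on a false premise. With two $7$-edges, only $\frac{10}{77}\approx0.130$ of negative weight remains for the other two edges. The patterns $(11,15)$, $(11,19)$, $(11,23)$ and $(15,15)$ all exceed this, so no pattern with two $7$-edges can be dismissed ``outright''. Your own example $-\frac27-\frac1{11}-\frac2{15}\approx-0.51$ lies far below $-\frac{32}{77}\approx-0.416$; it also has five terms, although there are only four negative pairs. The same holds for $-\frac27-\frac1{11}-\frac1{15}\approx-0.443$. For each such pattern you must show either that it cannot occur or that the within-class correlations compensate. A case list by products is unwieldy here, because both realizability and the positive terms depend on the actual reduced ratios (e.g.\ $15$ can come from $1{:}15$ or $3{:}5$).

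The missing idea is to split by the number of $7$-edges and then parametrize. With at most one $7$-edge, $S\ge-\frac17-\frac3{11}=-\frac{32}{77}$ at once, whatever $r$ is. With two $7$-edges, the quadruple is $\{a,7a,49a,b\}$ or $\{a,7a,b,7b\}$. Write $a=dv$, $b=du$ with $\gcd(u,v)=1$. Then \eqref{eq:pairodd} gives all remaining correlations, negative and positive together, in closed form. In the disjoint case, for instance,
\[
S=-\frac27+\frac{\chi(uv)}{uv}\left(2-\frac{\gcd(u,7)^2+\gcd(v,7)^2}{7}\right),
\]
which equals $-\frac27+\frac{12\chi(uv)}{7uv}$ when $7\nmid uv$. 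From this formula, the absence of $3$-pairs leaves $uv=11$ as the only case with $S<-\frac{32}{77}$. The shared-vertex case is handled the same way and always gives $S>-\frac{32}{77}$.

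Finally, $\frac{29}{924}$ is not attained at the exceptional quadruple, contrary to your guess. Its common positive set in $[0,\frac12]$ is $(0,\frac1{308})\cup(\frac3{308},\frac5{308})\cup(\frac{51}{308},\frac{53}{308})$, so $P(1,7,11,77)=\frac5{77}$.
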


\begin{proof}
Join two frequencies when their correlation is negative. This graph is
bipartite between the residue classes $1$ and $3$ modulo $4$, hence has
at most four edges. By \eqref{eq:pairodd}, every edge has weight
$|C|\le1/7$, with equality only for ratio $1{:}7$; call it a
\emph{$7$-edge}. All other weights are at most $1/11$.
If there is at most one $7$-edge, Corollary~\ref{cor:S} gives
\[
S\ge-\frac17-\frac3{11}=-\frac{32}{77},\qquad
P\ge\frac{29}{924}.
\]
Otherwise choose two $7$-edges.

\emph{Case 1: two $7$-edges share a vertex.}
Let the vertices be $a,7a,49a,b$. The first three contribute
\[
-\frac17-\frac17+\frac1{49}=-\frac{13}{49}.
\]
If $\chi(b)=\chi(a)$, the only other possible negative edge is
$\{b,7a\}$. Thus $S\ge-20/49$ and $P\ge19/588$.

Suppose $\chi(b)=-\chi(a)$.
Write $a=dv$ and $b=du$, where $\gcd(u,v)=1$.
By \eqref{eq:pairodd}, the remaining correlations sum to
\[
R=\frac{-1+\gcd(u,7)^2/7-\gcd(u,49)^2/49}{uv}.
\]
The absence of a $3$-pair gives $uv\ge7$, and also $uv/49\ge7$
when $49\mid u$, by considering the reduced ratio $(u/49){:}v$ of
$b$ and $49a$. Therefore
\[
R=
\begin{cases}
-\dfrac{43}{49uv}\ge-\dfrac{43}{343},&7\nmid u,\\[6pt]
\dfrac5{uv}>0,&7\mid u,\ 49\nmid u,\\[6pt]
-\dfrac{43}{uv}\ge-\dfrac{43}{343},&49\mid u,
\end{cases}
\]
and
\[
S\ge-\frac{13}{49}-\frac{43}{343}=-\frac{134}{343},\qquad
P\ge\frac18\left(\frac23-\frac{134}{343}\right)=\frac{71}{2058}.
\]

\emph{Case 2: two disjoint $7$-edges.}
The configuration is $(a,7a,b,7b)$.
Write $a=dv$ and $b=du$, with $\gcd(u,v)=1$, and let $h=\chi(uv)$.
Equation \eqref{eq:pairodd} gives
\[
S=-\frac27+R,\qquad
R=\frac{h}{uv}\left(2-\frac{\gcd(u,7)^2+\gcd(v,7)^2}{7}\right).
\]
Suppose first that $7\nmid uv$, so $R=12h/(7uv)$.
If $h=1$, then $S>-2/7$.
If $h=-1$, then $uv\equiv3\pmod 4$, while $uv\ne3$ because there is no
$3$-pair, and $uv\ne7$ by assumption.
For $uv\ge15$,
\[
S\ge-\frac27-\frac{12}{105}=-\frac25,\qquad P\ge\frac1{30}.
\]
The remaining value $uv=11$ gives $d\{1,7,11,77\}$. By dilation,
take $d=1$. The common positive set in $t\in[0,1/2]$ is
\[
\left(0,\frac1{308}\right)
\cup\left(\frac3{308},\frac5{308}\right)
\cup\left(\frac{51}{308},\frac{53}{308}\right).
\]
Equation~\eqref{eq:odd-symmetry} gives $P=4\cdot5/308=5/77$.

Suppose now that $7\mid uv$.
The bracket equals $-36/7$, so $R>0$ if $h=-1$.
For $h=1$, write $uv=7w$, where $w\equiv3\pmod 4$.
The value $w=3$ forces $\{u,v\}=\{1,21\}$ or $\{3,7\}$, both of which
produce a $3$-pair.
Hence $uv\ge49$, giving
$S\ge-2/7-36/343=-134/343$ and
$P\ge71/2058$.

In every case with at least two $7$-edges, the lower bound exceeds
$29/924$.
\end{proof}

\subsection{Eliminating a fourth frequency}\label{sec:odd-extension}

The correlation estimates bound the reduced ratios, but may leave the largest
frequency unbounded. For large $c$, the component estimate gives a lower
bound for $P(T,c)$ close to $P(T)/2$. Combining this with the initial arc
bound gives a lower bound independent of $c$.

\begin{lemma}\label{lem:triple-elimination}
Let $T$ be a triple of distinct odd positive integers, let $c$ be an odd
positive integer outside $T$, and let $M\ge N(T)$. Then
\begin{equation}\label{eq:triple-uniform}
P(T,c)\ge
\min\left\{\frac1{\max T},\frac{2P(T)}{M+4}\right\}.
\end{equation}
More precisely, if $p>0$, $P(T,c)\le p$, and $P(T)>2p$, then
\begin{equation}\label{eq:fourth-cutoff}
c\le\frac{M}{2(P(T)-2p)}.
\end{equation}
If also $\max T<1/p$, then $c\ge1/p$.
\end{lemma}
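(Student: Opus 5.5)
The plan is to combine the component estimate of Lemma~\ref{lem:component} with the initial-arc bound \eqref{eq:initial-arc}. Since $N(T)\le M$, Lemma~\ref{lem:component} gives
\[
P(T,c)\ge\frac12P(T)-\frac{M}{4c}.
\]
All frequencies in $T\cup\{c\}$ are odd, so \eqref{eq:initial-arc} gives $P(T,c)\ge1/\max(T\cup\{c\})$.

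\emph{Cutoff \eqref{eq:fourth-cutoff}.} Assume $P(T,c)\le p$ and $P(T)>2p$. The component bound yields $p\ge P(T)/2-M/(4c)$, that is,
\[
\frac{M}{4c}\ge\frac{P(T)-2p}{2}.
\]
Since $P(T)-2p>0$, rearranging gives $c\le M/(2(P(T)-2p))$.

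\emph{Lower bound on $c$.} If also $\max T<1/p$, argue by contradiction. Suppose $c<1/p$. Then $\max(T\cup\{c\})<1/p$, and \eqref{eq:initial-arc} gives $P(T,c)>p$, which contradicts the hypothesis. Hence $c\ge1/p$.

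\emph{Uniform bound \eqref{eq:triple-uniform}.} Split into two cases according to the size of $c$.
\begin{itemize}[label=$\cdot$]
\item If $c\le\max T$, then $\max(T\cup\{c\})=\max T$, and \eqref{eq:initial-arc} gives $P(T,c)\ge1/\max T$.
\item If $c>\max T$, then \eqref{eq:initial-arc} gives $P(T,c)\ge1/c$, while the component bound gives $P(T,c)\ge P(T)/2-M/(4c)$. Hence
\[
P(T,c)\ge\max\left\{\frac1c,\ \frac{P(T)}2-\frac{M}{4c}\right\}.
\]
Put $u=1/c$. The first expression increases in $u$ and the second decreases, so the maximum over $u$ is smallest where they are equal: $u=P(T)/2-Mu/4$, i.e.\ $u(1+M/4)=P(T)/2$, which gives $u=2P(T)/(M+4)$. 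Thus $P(T,c)\ge2P(T)/(M+4)$ for every $c$ in this case.
\end{itemize}
Taking the minimum over the two cases gives \eqref{eq:triple-uniform}.

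There is no serious obstacle. The only points to check are that $c\notin T$ does not matter for these bounds, that the hypothesis $P(T)>2p$ is what makes the division in the cutoff legitimate, and that \eqref{eq:initial-arc} applies because every frequency in $T\cup\{c\}$ is odd.
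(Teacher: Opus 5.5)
Your proof is correct and follows the paper's argument: the same case split at $\max T$, the same combination of the component estimate (with $N(T)\le M$) and the initial-arc bound, and the same rearrangements for the cutoff and for $c\ge1/p$. The only cosmetic difference is that you obtain $2P(T)/(M+4)$ by locating the crossing point of $1/c$ and $\frac12P(T)-\frac{M}{4c}$, whereas the paper chains $\frac12P(T)\le P(T,c)+\frac{M}{4c}\le\bigl(1+\frac M4\bigr)P(T,c)$.
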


\begin{proof}
For $c<\max T$, \eqref{eq:initial-arc} gives $P(T,c)\ge1/\max T$.
For $c>\max T$, \eqref{eq:component} and \eqref{eq:initial-arc} give
\[
P(T,c)\ge\max\left\{\frac12P(T)-\frac{M}{4c},\frac1c\right\}.
\]
In particular,
\[
\frac12P(T)\le P(T,c)+\frac{M}{4c}
\le\left(1+\frac M4\right)P(T,c),
\]
which proves \eqref{eq:triple-uniform}.
If $P(T,c)\le p$, the component estimate also gives
\[
\frac12P(T)-\frac{M}{4c}\le p,
\]
and rearrangement proves \eqref{eq:fourth-cutoff}. Finally,
$\max(T\cup\{c\})\ge1/p$ by \eqref{eq:initial-arc}, giving the
last assertion.
\end{proof}

We apply the criterion to the small triples that arise in the ratio
classification. Define
\[
\begin{aligned}
\mathcal T_0=\bigl\{&\{1,3,5\},\{1,3,7\},\{1,3,9\},\{1,3,19\},\{3,5,9\},\{3,5,11\},\{1,11,23\}\bigr\}.
\end{aligned}
\]

The interval calculations used in the next lemma are given in
Appendix~\ref{sec:odd-certificates}.

\begin{lemma}\label{lem:triples}
Let $A$ be a quadruple of distinct odd positive integers. Then
\begin{equation}\label{eq:triple-exclusion}
T\subset A\text{ for some }T\in\mathcal T_0
\quad\Longrightarrow\quad P(A)>\frac1{33}.
\end{equation}
If $P(A)\le1/33$ and $A=T\cup\{c\}$, then
\begin{equation}\label{eq:triple-fourth}
\begin{aligned}
T=\{1,3,11\}&\quad\Longrightarrow\quad c=33,\\
T\in\bigl\{\{1,3,21\},\{1,7,21\},\{3,7,21\}\bigr\}
&\quad\Longrightarrow\quad 33\le c\le36,\\
T=\{1,3,33\}&\quad\Longrightarrow\quad c\le55.
\end{aligned}
\end{equation}
\end{lemma}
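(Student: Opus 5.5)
The plan is to apply Lemma~\ref{lem:triple-elimination} to each triple $T$ with $p=1/33$. This needs two exact inputs per triple, both read off from the explicit common positive set $T_+\cap[0,1/2]$: the value $P(T)$, obtained via \eqref{eq:odd-symmetry}, and a bound $M\ge N(T)$, obtained as the number of arcs of $T_+$ (doubled, by the half-period symmetry). Each triple is small, so $T_+\cap[0,1/2]$ is found by intersecting the positive intervals $\{t:\cos(2\pi at)>0\}$ for $a\in T$, with endpoints in $\frac1{4\operatorname{lcm}(T)}\Z$. These interval lists are the certificates to be recorded in Appendix~\ref{sec:odd-certificates}.

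\textbf{The exclusion \eqref{eq:triple-exclusion}.} For each $T\in\mathcal T_0$ I would first check the uniform bound \eqref{eq:triple-uniform}. The quantity $1/\max T$ already exceeds $1/33$ because $\max T\le23$, so it remains to check $2P(T)/(N(T)+4)>1/33$. For example, $P(1,3,9)=1/9$ with $N=2$, which gives $1/27>1/33$. Whenever this crude inequality fails, I would fall back on \eqref{eq:fourth-cutoff}. Since $P(T)>2/33$, that cutoff bounds $c\le M/(2(P(T)-2/33))$, while $\max T<33$ forces $c\ge33$. This leaves a finite range of odd $c$ to exclude. For each such $c$ I would either exhibit an explicit common positive interval of total length exceeding $1/132$ in $[0,1/2]$, or, for $c$ forming a $3$-pair or a ratio-$7/11$ pair with an element of $T$, quote Propositions~\ref{prop:doubletriple} and \ref{prop:shortodd}. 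For the triples containing $\{1,3\}$ with third element $5,7,9$ these propositions cover $c=15,21,27$ directly. The only triple where the combinatorics can get long is $\{1,11,23\}$, where $N(T)$ is larger and the cutoff range correspondingly wider.

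\textbf{The implications \eqref{eq:triple-fourth}.} Here $P(T)$ is close to $2/33$ and the argument is more delicate. For $T=\{1,3,11\}$ I compute $T_+$ (a few arcs), deduce $c\ge33$ from the last assertion of Lemma~\ref{lem:triple-elimination}, and bound $c$ above by \eqref{eq:fourth-cutoff}. I then check each remaining odd $c\ne33$ in that window by exhibiting a surviving interval. For the three triples containing $21$, the inequality $\max T=21<33$ again gives $c\ge33$, and the cutoff is expected to give $c\le36$ directly once $P(T)$ and $N(T)$ are computed. The same holds for $\{1,3,33\}$, where $P(T)$ and $N(T)$ should give $c\le55$; there $c\ge33$ is automatic from $\max T=33$.

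The main obstacle is the bookkeeping in the finite ranges. The cutoff \eqref{eq:fourth-cutoff} may leave a few dozen values of $c$, especially for $\{1,11,23\}$ and $\{1,3,11\}$. Each of these needs a verified interval of $(T\cup\{c\})_+$ with length above $1/132$ in $[0,1/2]$, or equivalently an exact computation of $P$. I would first try to shrink these ranges by replacing $N(T)$ with a sharper count, namely by bounding $|\int_J s(cx)\,dx|$ using arc lengths shorter than $\pi/c$. Only if that fails would I list the remaining $c$ and their certifying intervals explicitly in the appendix.
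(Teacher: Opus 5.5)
Your approach is the paper's: apply Lemma~\ref{lem:triple-elimination} with $p=1/33$, using $P(T)=4L(T)$ and $N(T)\le M(T)=4k(T)-2$ read off from the explicit interval lists in Table~\ref{tab:odd-triples}. None of your fallbacks is actually needed. Every $T\in\mathcal T_0$ already satisfies $2P(T)/(M(T)+4)>1/33$, the tightest cases being $5/161$, $7/225$ and $3/95$; and for $\{1,3,11\}$ the cutoff gives $c\le 6/(2\cdot 3/33)=33$ exactly, which together with $c\ge 33$ leaves no finite window of $c$ to check.
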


\begin{proof}
For each $T\in\mathcal T_0$, Table~\ref{tab:odd-triples} gives
\[
\max T\le23,\qquad
N(T)\le M(T),\qquad \frac{2P(T)}{M(T)+4}>\frac1{33}.
\]
Thus \eqref{eq:triple-uniform} proves \eqref{eq:triple-exclusion}.
For the triples in \eqref{eq:triple-fourth}, the same table and
\eqref{eq:fourth-cutoff}, with $p=1/33$, give respectively
\[
\begin{aligned}
c&\le\frac{6}{2(5/33-2/33)}=33,\\
c&\le\frac{6}{2(1/7-2/33)}=\frac{693}{19}<37,\\
c&\le\frac{10}{2(5/33-2/33)}=55.
\end{aligned}
\]
For the first two lines, $\max T<33$, so the last assertion of
Lemma~\ref{lem:triple-elimination} gives $c\ge33$.
\end{proof}

\subsection{Reduction to finitely many quadruples}\label{sec:onetriple}

The pair correlation bounds and Lemma~\ref{lem:triples} reduce the
one-pair case to the following ten quadruples.

We identify an increasing quadruple with its underlying frequency set and put
\begin{equation}\label{eq:A35}
\mathcal E_{\mathrm I}
=\bigl\{(1,11,19,33),\ (1,11,31,33),\ (1,15,19,45)\bigr\},
\end{equation}
\begin{equation}\label{eq:A49}
\begin{aligned}
\mathcal E_{\mathrm {II}}=\bigl\{&(1,3,21,35),\ (1,3,23,33),\ (1,3,33,55),\\
&(1,11,15,33),\ (1,11,27,33),\ (1,11,33,39),\ (3,11,13,33)\bigr\}.
\end{aligned}
\end{equation}
Write $\mathcal E=\mathcal E_{\mathrm I}\cup\mathcal E_{\mathrm {II}}$.

\begin{proposition}\label{prop:odd-reduction}
Let $A$ be a primitive quadruple of distinct odd positive integers
containing exactly one $3$-pair. Then
\begin{equation}\label{eq:odd-finite-reduction}
P(A)\le\frac1{33}\quad\Longrightarrow\quad A\in\mathcal E.
\end{equation}
\end{proposition}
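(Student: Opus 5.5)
By dilation I would normalize the unique $3$-pair to $\{g,3g\}$ and write the remaining two frequencies as $b<c$, all odd. By \eqref{eq:tripgcd} and primitivity I may assume no three of them share a factor. The starting point is \eqref{eq:lowconseq}: $P(A)\le1/33$ forces $S(A)\le-14/33$. The $3$-pair contributes $-1/3$, so the other five correlations must total at most $-14/33+1/3=-3/33=-1/11$.

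Negative correlations only join the classes $1$ and $3$ modulo $4$. By \eqref{eq:pairodd}, each negative correlation other than the $3$-pair has size at most $1/7$. A positive correlation contributes $+\gcd^2/(ab)$. I would therefore list the sign pattern of the bipartite graph. Since $g$ and $3g$ lie in opposite classes, each of $b$ and $c$ has at most one negative edge to $\{g,3g\}$, plus possibly the edge $\{b,c\}$. This gives at most three further negative edges, each of weight $1/(uv)$ with $uv\equiv3\pmod4$ and $uv\ge7$ (there is no second $3$-pair), offset by the positive edges. Requiring the total to be $\le-1/11$ bounds the reduced products $uv$ on at least one edge by a small constant, roughly $uv\le 33$. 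This is where the paper's short-ratio kernels enter. If $b$ or $c$ forms a ratio-$7$ or ratio-$11$ pair with another frequency, Propositions~\ref{prop:shortodd} and~\ref{prop:doubletriple}(a) already give $P>1/33$ or the extremizer. So the surviving configurations have a $3$-pair joined to $b$ or $c$ by small reduced ratios. In every case the triple $T$ consisting of $g,3g$ and one of $b,c$ then has bounded reduced shape, and after dilation (primitivity together with \eqref{eq:tripgcd}) it lies in an explicit finite list.

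For each such triple $T$ I would apply Lemma~\ref{lem:triples}. If $T\in\mathcal T_0$, then $P(A)>1/33$. The triples $\{1,3,11\}$, $\{1,3,21\}$, $\{1,7,21\}$, $\{3,7,21\}$ and $\{1,3,33\}$ force $c$ into the short ranges of \eqref{eq:triple-fourth}. Any other triple produced by the ratio analysis would be handled by Lemma~\ref{lem:triple-elimination} with $M$ a computed bound for $N(T)$: either \eqref{eq:triple-uniform} exceeds $1/33$, or \eqref{eq:fourth-cutoff} together with $c\ge33$ gives a finite window. The remaining finitely many candidates are then screened against $S\le-14/33$, the exclusion of a second $3$-pair, and the gcd conditions. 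Only the ten quadruples in $\mathcal E$ should survive. For example, $c=33$ with $T=\{1,3,11\}$ is excluded because $\{11,33\}$ would be a second $3$-pair. The candidates $c\in[33,36]$ reduce to $(1,3,21,35)$ and its relatives.

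The main obstacle is the middle step: organizing the case split on which edges are negative so that every branch yields a triple of bounded shape. The difficulty arises when the strong negative edges attach only to the large frequency $c$, so that the bound on ratios does not immediately bound the scale $g$. There I would first try to isolate the triple $\{g,3g,b\}$ containing the two smallest elements. I would bound $g$ and $b$ using the positive cross-correlations, which decay like $1/(uv)$, and leave $c$ unbounded for Lemma~\ref{lem:triple-elimination} to handle. This bookkeeping is presumably what Appendix~\ref{sec:odd-arithmetic} records.
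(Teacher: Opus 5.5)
Your outline has the same architecture as the paper's Appendix~\ref{sec:odd-arithmetic}: the $-1/11$ budget from Corollary~\ref{cor:S}, then Proposition~\ref{prop:shortodd}, Lemmas~\ref{lem:triples} and~\ref{lem:triple-elimination}, and a finite check. However, the step that actually constitutes the proof is only asserted. That step is producing a bounded triple in every branch and showing that the survivors are exactly $\mathcal E$.

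Your bound ``$uv\le33$ on at least one edge'' says nothing about scale when that edge is $\{b,c\}$, and that is exactly your ``main obstacle''. The missing idea is that, for $x\in\{b,c\}$, the two cross correlations with $a$ and $3a$ always have opposite signs and combine into one term. By \eqref{eq:tripgcd}, $\gcd(a,x)=1$. With $x'=x$ or $x/3$ according as $3\nmid x$ or $3\mid x$, Lemma~\ref{lem:pair} gives
\[
C(a,x)+C(3a,x)=\frac{2\chi(ax')}{3ax'}.
\]
Suppose $\{b,c\}$ is not itself a $7$- or $11$-pair; that case is Proposition~\ref{prop:shortodd}, which holds at every scale. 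Then $C(b,c)\ge-1/15$, and the budget forces $a\min(b',c')\le55$, which bounds one triple $\{a,3a,x\}$.

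The paper sharpens this by writing $b=du$ or $3du$, $c=dv$, and $D=ad$ (Classes I and II). Then $C(b,c)$ depends only on $(u,v)$, and the cross part is $R_D$ with $|R_D|\le4/(3D\min(u,v))$. The bounds \eqref{eq:A33}, \eqref{eq:A34}, \eqref{eq:A43}, \eqref{eq:A48} keep the resulting lists short. Bounding $g,b$ ``using the positive cross-correlations'' is not the mechanism; what matters is the net of one positive and one negative term.

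Two of your shortcuts also fail as stated.

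\emph{Scope of the short-ratio propositions.} Proposition~\ref{prop:shortodd} only treats configurations in which $\{b,c\}$ is the $7$- or $11$-pair. Proposition~\ref{prop:doubletriple}(a) concerns two $3$-pairs, which the hypothesis excludes. Neither covers a $7$- or $11$-pair joining $b$ or $c$ to $\{a,3a\}$. Nine of the ten members of $\mathcal E$ contain such a pair, for instance $(1,3,21,35)$ and $(1,11,19,33)$, and they need the explicit intervals of Lemma~\ref{lem:odd-candidates}.

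\emph{The final screen.} Screening the windows only by $S\le-14/33$, uniqueness of the $3$-pair and the gcd conditions does not leave $\mathcal E$. For example, $A=(1,3,57,95)$ is primitive, satisfies \eqref{eq:tripgcd}, has exactly one $3$-pair, and has $S(A)=-42/95<-14/33$. With the generic count $N(1,3,57)\le122$, \eqref{eq:fourth-cutoff} only gives $c\le627$, so $95$ lies in the window. It is removed only by the sharp count $N(1,3,57)=18$, for which Lemma~\ref{lem:component} gives $P(A)\ge3/95>1/33$. This is the paper's treatment of the family $D\in\{19,23,31\}$, $s=5$, which uses component counts such as $N(T)\le D-1$.

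So the last step must apply sharp component estimates to each specific survivor. The enumeration itself must also be carried out before you can claim that exactly the ten quadruples of $\mathcal E$ remain.
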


The proof is given in Appendix~\ref{sec:odd-arithmetic}. Its two classes
correspond to whether neither or exactly one of the remaining frequencies
is divisible by $3$, yielding $\mathcal E_{\mathrm I}$ and
$\mathcal E_{\mathrm {II}}$, respectively. The following direct interval
check completes the argument.

\begin{lemma}\label{lem:odd-candidates}
For every $A\in\mathcal E$,
\begin{equation}\label{eq:odd-candidate-bound}
P(A)\ge\frac{28}{855}>\frac1{33}.
\end{equation}
\end{lemma}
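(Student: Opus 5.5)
Since every $A\in\mathcal E$ consists of odd frequencies, \eqref{eq:odd-symmetry} gives $P(A)=4|A_+\cap[0,1/2]|$. So for each of the ten quadruples I would compute the common positive set on $[0,1/2]$ explicitly, as was done for $\{1,7,11,77\}$ in the proof of Proposition~\ref{prop:notriple}. A lower bound by a single exhibited arc is too weak here, since the near-origin arc alone only gives $1/\max A$. I therefore plan to list all the common positive intervals, or at least enough of them, in the variable $t$ measured in units of $1/(4L)$, where $L=\operatorname{lcm}(A)$. The sign changes of $\cos(2\pi a t)$ occur at $t=(2k+1)/(4a)$, so all endpoints are odd multiples of $1/(4L)\cdot(L/a)$. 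Hence $|A_+\cap[0,1/2]|$ is a rational with denominator dividing $4L$, and the computation is an exact finite merge of sorted endpoint lists.

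The steps, for each $A=(a_1,a_2,a_3,a_4)$, are as follows. First, use the existing $3$-pair (or a pair in ratio $11$, $15$, etc.) to replace two frequencies by a single arc family. For example, in $(1,3,21,35)$ the pair $\{1,3\}$ has positive set $|t|<1/12$, which leaves only $21,35$ on $[0,1/12]$. In $(1,11,19,33)$, the pair $\{11,33\}$ gives arcs $|t-j/11|<1/132$, and only the arcs with $|j/11|<1/4$ survive the factor $\cos(2\pi t)$, namely $j=0,\pm1,\pm2$. Second, on each surviving arc, intersect with the positive intervals of the remaining frequency or frequencies by locating the zeros $(2k+1)/(4a)$ inside the arc. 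Third, sum the lengths, multiply by $4$, and record the value. Then verify that the minimum over the ten quadruples is $28/855$, which should be attained by $(1,15,19,45)$, since $855=45\cdot19$ matches $L$ there. Finally, compare $28\cdot33=924>855$.

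The main obstacle is bookkeeping rather than ideas: the quadruples with large $\operatorname{lcm}$ (for instance $(1,11,19,33)$ with $L=627$, or $(3,11,13,33)$ with $L=429$) have many short pieces, so errors in endpoint ordering are the real risk. To control this, I would use the symmetry $t\mapsto -t$ and the reduction of the first step so that each case involves at most one pair-arc family crossed with two single cosines. For each case I would present the result as a table of intervals, in the style of the certificate for $\{1,7,11,77\}$, and record these in Appendix~\ref{sec:odd-certificates}. Where the exact total is unnecessary because the bound is comfortable, I would exhibit just enough intervals to exceed $28/855$, saving the full exact computation for the near-extremal case $(1,15,19,45)$.
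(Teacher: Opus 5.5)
Your plan is the paper's proof. Table~\ref{tab:odd-candidates} exhibits, for each $A\in\mathcal E$, disjoint common positive intervals in $[0,1/2]$ (deliberately not exhaustive) and concludes $P(A)\ge4\cdot(\text{total length})$ from \eqref{eq:odd-symmetry}, exactly as you propose.

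One correction to your final step: $28/855$ is not a value of $P$. It is only the contribution of the single listed interval $(15/76,37/180)$. In fact $P(1,15,19,45)=7/95$ once the pieces $(0,1/180)$ and $(11/180,5/76)$ are included, and the exact minimum over $\mathcal E$ is $P(3,11,13,33)=23/429$. So you should check that every value is at least $28/855$, not that the minimum equals it, and $(1,15,19,45)$ is not the near-extremal case.
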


\begin{proof}
For every $A\in\mathcal E$, Table~\ref{tab:odd-candidates} gives disjoint
intervals in $[0,1/2]$ on which all four cosines are positive, with
$4L\ge28/855$, where $L$ is their total length.
By reflection and \eqref{eq:odd-symmetry}, $P(A)\ge4L$.
The endpoint and sign verifications appear in
Appendix~\ref{app:odd-candidate-certificates}.
\end{proof}

\begin{proposition}\label{thm:onetriple}
If $A$ is a primitive quadruple of distinct odd positive integers
containing exactly one $3$-pair, then
\[
P(A)>\frac1{33}.
\]
\end{proposition}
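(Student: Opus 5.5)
This is an immediate combination of Proposition~\ref{prop:odd-reduction} and Lemma~\ref{lem:odd-candidates}, and I would argue by contradiction. Let $A$ be a primitive quadruple of distinct odd positive integers containing exactly one $3$-pair, and suppose that $P(A)\le1/33$.

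The hypotheses of Proposition~\ref{prop:odd-reduction} are then exactly satisfied: $A$ is primitive, all its frequencies are odd, and it contains exactly one $3$-pair. The implication \eqref{eq:odd-finite-reduction} therefore places $A$ in the finite family $\mathcal E=\mathcal E_{\mathrm I}\cup\mathcal E_{\mathrm{II}}$. Lemma~\ref{lem:odd-candidates} then gives
\[
P(A)\ge\frac{28}{855}>\frac1{33},
\]
which contradicts the assumption. The one point to check is that $28\cdot33=924>855$. Hence $P(A)>1/33$ for every such $A$.

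All of the real work lies in the two results being invoked. The main obstacle is the arithmetic reduction of Proposition~\ref{prop:odd-reduction}, whose proof is deferred to the appendix. It has two parts. First, \eqref{eq:lowconseq} forces $S(A)\le-14/33$, and Lemma~\ref{lem:pair} then bounds the reduced ratios linking the remaining two frequencies to the $3$-pair. Second, Lemma~\ref{lem:triples} either excludes the resulting triples outright or bounds the fourth frequency. The interval certificates in Table~\ref{tab:odd-candidates} are routine by comparison. At the level of this proposition, the proof consists only of chaining the two results.
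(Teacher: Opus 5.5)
Your proposal is correct and is essentially the paper's own proof: assuming $P(A)\le1/33$, Proposition~\ref{prop:odd-reduction} places $A$ in $\mathcal E$, which contradicts the bound $P(A)\ge28/855>1/33$ of Lemma~\ref{lem:odd-candidates}. Your arithmetic check $28\cdot33=924>855$ is right.
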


\begin{proof}
If $P(A)\le1/33$, Proposition~\ref{prop:odd-reduction} gives
$A\in\mathcal E$, contrary to Lemma~\ref{lem:odd-candidates}.
\end{proof}

\section{Conditional correlations}\label{sec:auxiliary}

For an odd $3$-pair, parity splitting reduces the common-sign
probability to a conditional two-frequency correlation. A sharp lower
bound for this correlation will also determine the equality cases.

Let $a$ be an odd positive integer, and let $b,c$ be distinct positive
integers with $\gcd(a,bc)=1$. Conditioning on the map $x\mapsto ax$
leads to the sampled correlation
\begin{equation}\label{eq:sampled-correlation}
F_a(\tau)=\frac1a\sum_{j=0}^{a-1}
s\left(\frac{b(\tau+2\pi j)}a\right)
s\left(\frac{c(\tau+2\pi j)}a\right).
\end{equation}
We shall average it over the central arc:
\begin{equation}\label{eq:sampled-mean}
\rho=\frac3{2\pi}\int_{-\pi/3}^{\pi/3}F_a(\tau)\,d\tau.
\end{equation}
The parity identity \eqref{eq:parity-split} and the change of variables
$y=2x$ give
\begin{equation}\label{eq:oddtriple-conditioning}
\begin{aligned}
P(a,3a,2b,2c)
&=\frac14\E\bigl[S_0(ay)(1+s(by)s(cy))\bigr]=\frac{1+\rho}{12}.
\end{aligned}
\end{equation}
For the second equality, condition on $ay$: the average of
$s(by)s(cy)$ on each fiber is $F_a$, and
$S_0(\tau)=\one_{\{|\tau|<\pi/3\}}$ on $[-\pi,\pi)$.
Thus $P\ge1/24$ is equivalent to $\rho\ge-1/2$.

\begin{proposition}\label{lem:sampled-half}
Let $a$ be an odd positive integer and let $b,c$ be distinct positive
integers with $\gcd(a,bc)=1$. For the average \eqref{eq:sampled-mean},
\[
\rho\ge-\frac12,
\]
with equality if and only if
\[
(a,\{b,c\})\in\bigl\{(1,\{2,6\}),(5,\{1,2\})\bigr\}.
\]
\end{proposition}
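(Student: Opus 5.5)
The plan is to write $\rho$ as a full-circle expectation and then split it into a main term and an error. Since $S_0(\tau)=\one_{\{|\tau|<\pi/3\}}$ and $\E S_0=1/3$, conditioning on $ay$ as in \eqref{eq:oddtriple-conditioning} gives
\[
\rho=3\,\E\bigl[S_0(ay)\,G(y)\bigr],\qquad G(y)=s(by)s(cy).
\]
Parseval's identity then yields
\[
\rho=C(b,c)+3\sum_{n\ne0}\widehat S_0(n)\,\widehat G(an).
\]
The main term $C(b,c)$ is controlled by Lemma~\ref{lem:pair}. It is $0$ unless the reduced ratio $u{:}v$ of $b,c$ is odd, and it is at least $-1/3$ in every case. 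The task is therefore to show that the error term cannot push $\rho$ below $-1/2$, except in the two listed configurations.

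\textbf{Step 1: exact ratio-$3$ case.} If $\{b,c\}=g\{1,3\}$, then \eqref{eq:s3} gives $G(y)=2S_0(2gy)-1$. Hence $\rho=6\E[S_0(ay)S_0(2gy)]-1$. This is a correlation of two interval indicators, which I evaluate with the Bernoulli formula \eqref{eq:bernoulli}, exactly as in Proposition~\ref{prop:doubletriple}(b). One obtains $\rho=-1/3$ plus a term of size at most a constant over $2ag$, with explicit residues modulo $6$ determining the sign. Checking the small products $ag$ should leave $a=1$, $g=2$, that is $(1,\{2,6\})$, as the unique equality case. This is consistent with $P(1,3,4,12)=1/24$.

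\textbf{Step 2: general case, two scale regimes.} Here $C(b,c)\ge-1/7$ or $C(b,c)=0$, so the error term must be shown to be at least $-5/14$ (respectively $-1/2$). I will bound it in two regimes, using the device of Lemma~\ref{lem:component} and Lemma~\ref{lem:triple-elimination}.

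\emph{(i) $a$ large compared with $b+c$.} The function $G$ has at most $2(b+c)$ sign changes, while $S_0(a\,\cdot)$ is a union of $a$ arcs of length $2\pi/(3a)$. Comparing the integral of $G$ over each such arc with its mean shows
\[
\bigl|\rho-C(b,c)\bigr|\le\frac{c_0\,(b+c)}{a}
\]
for an absolute constant $c_0$.

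\emph{(ii) $b,c$ large compared with $a$.} Reversing roles, $S_0(a\,\cdot)$ has only $2a$ endpoints, and $G$ is a product of square waves whose primitive is bounded. I expect
\[
\bigl|\E[S_0(ay)G(y)]-\tfrac13\E G\bigr|\lesssim\frac{a}{\min(b,c)}.
\]
When $b/g$ and $c/g$ are both large, I would instead use the fiber average of Lemma~\ref{lem:average} after reducing by $\gcd(b,c)$, in the style of Proposition~\ref{prop:transfer}.

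These two estimates leave a bounded box of triples $(a,b,c)$. The remaining triples are treated by the explicit kernel formulas: for short ratios $7$ and $11$, the $K_v$ tables of Lemma~\ref{lem:Kv} and the $T_h$ identities \eqref{eq:ThS0}. For the rest, I compute $\rho$ from the finitely many arcs where $G=\pm1$ inside the grid-sampled central arc. The second equality case $(5,\{1,2\})$, which corresponds to $P(2,4,5,15)=1/24$, must emerge from this finite check.

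\textbf{Main obstacle.} The hardest part is the intermediate regime, where $a$ is comparable to $b$ and $c$ and neither scale-separation bound is strong enough. The clean Fourier error
\[
\sum_{n\ne0}\widehat S_0(n)\,\widehat G(an)
\]
involves coefficients of $G$ at multiples of $a$, which are sums over representations $bk+cl=an$. My first attempt would bound this sum by $\sum|\widehat s(k)\widehat s(l)|$ restricted to those lattice lines, and use $\gcd(a,bc)=1$ to force $|k|,|l|$ to be large unless $a$ is small. This should yield an estimate of order $\log(a)/a$, pushing the finite search down to modest sizes, where a direct interval computation then certifies $\rho>-1/2$ outside the two equality cases.
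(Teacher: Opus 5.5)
\textbf{There is a genuine gap: your scheme cannot reduce the problem to a finite search.} Step~1 and regime~(i) are sound. For $\{b,c\}=g\{1,3\}$ the Bernoulli computation gives $\rho=-\frac13$ when $3\mid g$, and $\rho=-\frac13\pm\frac1{3ag}$ otherwise, which isolates $(a,g)=(1,2)$. For $a\gg b+c$, the arc-by-arc comparison does give $|\rho-C(b,c)|=O((b+c)/a)$. Outside these two settings, however, no bounded box remains.

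\emph{The estimate in (ii) is false.} For large $b,c$, the product $G=s(b\,\cdot)s(c\,\cdot)$ is not a high-frequency function. It carries beat frequencies $|kb+lc|$ for small odd $k,l$. For example, take $a=1$ and $(b,c)=(1000,1001)$. Locally $G(y)$ averages to $\Lambda(y)$, so $\rho\approx\frac3\pi\int_0^{\pi/3}\Lambda(\tau)\,d\tau=\frac23$, while $C(b,c)=0$. Your bound would predict an error of order $10^{-3}$.

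\emph{The hoped-for $O(\log a/a)$ bound on $\sum_{n\ne0}\widehat S_0(n)\widehat G(an)$ fails for the same reason.} The condition $\gcd(a,bc)=1$ does not exclude $kb+lc\equiv0\pmod a$ with $k,l$ small. If $c=b+ka$, then $\widehat G(kam)$ contains the term $\widehat s(-m)\widehat s(m)=4/(\pi^2m^2)$, independent of $a$. Moreover, the grid count of Lemma~\ref{lem:grid-correlation}, applied in both directions, gives $F_a(\tau)=\Lambda(k\tau)+O(1/a)$. Take $b=1$ and $c=4a+1$; then $\gcd(a,bc)=1$ and $a$ is comparable to $c$, which is exactly your intermediate regime. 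Here $\rho=-\frac16+O(1/a)$, while $C(b,c)=\frac1{4a+1}$. So the error term is of order one for every $a$. The same happens whenever $c\equiv rb\pmod a$ with $r\equiv-k/l$ for small odd $k,l$. These infinite families, together with all pairs $(b,c)$ when $a=1$, lie outside both of your regimes.

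\textbf{The missing idea is a bound that is uniform in the sizes of $b$ and $c$.} The paper obtains this by working pointwise with the fiber average $F_a(\tau)$, which depends only on the residues of $b,c$ modulo $a$ and a slow shift $k\tau$.
\begin{itemize}
\item For $a=1$ it uses the initial-interval estimate of Lemma~\ref{lem:integerprefix}, which holds for every ratio $c/b$.
\item For $a\ge3$ and $\epsilon c=b+ka$, the grid count gives $F_a(\tau)\ge\Lambda(k\tau)-\frac2a$. Integrating gives $\frac3\pi\int_0^{\pi/3}\Lambda(k\tau)\,d\tau\ge-\frac16$. The cases $a=3$, and $a=5$ with $|k|\in\{4,5\}$, are sharpened by exact counting.
\item For $c\not\equiv\pm b\pmod a$, the two positive sets on the $a$-point grid are affine images of discrete half-circles with multiplier $\not\equiv\pm1$. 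Lemma~\ref{lem:oddhalf} then gives the pointwise bound $F_a\ge-1+\frac2a\lfloor a/3\rfloor>-\frac12$ for all $a\ge7$.
\end{itemize}
Only the five-point grid then needs explicit analysis, and that is where $(5,\{1,2\})$ arises. None of this requires $b$ or $c$ to be bounded, which is precisely what your scale-separation approach cannot deliver.
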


For $a=1$, this is an initial-interval estimate. For $a\ge3$, we distinguish
the cases
\[
c\equiv\pm b\pmod a
\qquad\text{and}\qquad
c\not\equiv\pm b\pmod a.
\]
In the first case, one square wave is a translate of the other on the
grid. In the second, the positive sets are affine images of discrete
half-circles. The corresponding estimates give strict inequality
except for the five-point grid, which must be examined separately.
We prove the proposition in Section~\ref{sec:conditional-proof} after
proving these estimates. The quadrant estimate at the end of the
section will be used for an even $3$-pair.

\Needspace{20\baselineskip}
\subsection{Initial intervals}\label{sec:prefix}

We first prove a bound for real frequencies, also needed for the
$7$-pair estimates in Section~\ref{sec:mixed}. For integer frequencies
on $[0,\pi/3]$, we then sharpen it to the bound required here.

Write $\sigma(x)=\sgn\cos(\pi x)$ and, for $r,L>0$, set
\begin{equation}\label{eq:prefix-agreement}
\begin{gathered}
I_r(L)=\int_0^L\sigma(x)\sigma(rx)\,dx,\\
A_r(L)=\bigl|\{x\in[0,L]:\sigma(x)=\sigma(rx)\}\bigr|,
\qquad I_r(L)=2A_r(L)-L.
\end{gathered}
\end{equation}
\begin{lemma}\label{thm:prefix}
For all positive real numbers $u,v,T$,
\[
\int_0^T\sgn\cos(ut)\,\sgn\cos(vt)\,dt\ge-\frac35T.
\]
Equality holds when $v=3u$ and $T=5\pi/(6u)$, so the constant is sharp.
\end{lemma}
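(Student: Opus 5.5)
By the substitution $t=\pi x/u$ and the symmetry in $u,v$, the claim is equivalent to
\[
I_r(L)\ge-\tfrac35L,\qquad\text{equivalently}\qquad A_r(L)\ge\tfrac15L,
\]
for all real $r\ge1$ and $L>0$, in the notation \eqref{eq:prefix-agreement}. The sharpness statement becomes $r=3$, $L=5/6$. There a direct check gives $A_3(5/6)=1/6$, coming only from the two agreement arcs $[0,1/6)$ and $[1/2,5/6)$. So the target is: the disagreement set of the two square waves $\sigma(x)$ and $\sigma(rx)$ occupies at most $4/5$ of every initial interval $[0,L]$.

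I would first treat $L\le 1/2$, where $\sigma(x)=1$ and $A_r(L)=|\{x\le L:\sigma(rx)=1\}|$. The wave $\sigma(rx)$ starts with a positive half-block of length $1/(2r)$ and then alternates blocks of length $1/r$. The worst initial ratio therefore occurs at $L=3/(2r)$ and equals $1/3$, so $A_r(L)\ge L/3$ here. For general $L$, I would partition $[0,L]$ at the sign changes of $\sigma$ into $[0,1/2)$, $[1/2,3/2)$, $[3/2,5/2)$, $\dots$, ending with a partial block. The basic tool is the triangle-wave bound from the proof of Lemma~\ref{lem:component}: the integral of $\sigma(rx)$ over any interval has absolute value at most $1/r$. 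This gives a loss of at most $1/r$ per full block, and the initial block contributes with the good sign. Summing,
\[
I_r(L)\ge -\frac{\#\{\text{blocks}\}}{r}-(\text{a boundary term from the last partial block}),
\]
which already yields $-3L/5$ once $r$ is moderately large, roughly $r\ge 5/3$ after the boundary term is accounted for. One still has to check separately the short ranges $L\lesssim 3/2$, where the number of blocks is not yet proportional to $L$.

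For $1\le r<5/3$, I would exchange the roles of the two waves, using that the zeros are close together. The sign changes of $\sigma(x)$ are at $k+\tfrac12$ and those of $\sigma(rx)$ are at $(k+\tfrac12)/r$. The two waves therefore agree everywhere on $[0,1/(2r))$, and disagree only on the arcs between corresponding zeros until the zeros drift apart by a full block. Counting this directly, one phase at a time, should give an agreement proportion close to $1$ on an initial range. Beyond that range the block argument applies again, with blocks taken from $\sigma(rx)$ and the triangle bound $1$ for $\sigma$.

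The main obstacle is the intermediate regime, $r$ near $2$ to $3$ with $L$ of order one, which contains the extremal case. There the crude $1/r$-per-block bound is not sharp enough. My plan there is the following. The function $L\mapsto I_r(L)+3L/5$ is piecewise linear with slopes $\pm1+3/5$, so its minima occur at points $L$ where $\sigma$ agrees before $L$ and disagrees after it, that is, at sign changes of either wave. At such a point I would bound $A_r(L)$ from below by the explicit agreement arcs: the initial arc $[0,1/(2r))$, plus the arc of agreement created just after $x=1/2$ whenever $\sigma(rx)$ is negative there. I would then verify $A_r(L)\ge L/5$ by a finite case analysis in the position of $r$ relative to the odd integers $1,3,5$, expecting equality only at $r=3$, $L=5/6$.
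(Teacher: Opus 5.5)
\textbf{The gap is the range $1<r<5/3$.} Your per-block estimate gives agreement at least $(1-1/r)/2$ on each unit block of $\sigma$. This beats the rate $1/5$ only when $r\ge5/3$.

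\emph{Exchanging roles does not help.} A block of $\sigma(r\,\cdot)$ has length $1/r\le1$, while the primitive of $\sigma$ has range $1$. So the bound $|\int_J\sigma|\le1$ is no better than the trivial bound $|J|$, and it allows every such block to be pure disagreement.

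\emph{Phase-by-phase counting only covers the start.} It controls the initial stretch before the zeros drift apart. Near $x\approx1/(r-1)$ the two waves are almost in antiphase, and a unit block of $\sigma$ can then have agreement exactly $1-1/r$. Hence for $r<5/4$ no bound that is uniform over the blocks can give the rate $1/5$. For example, at $r=1.1$ and large $L$, your stated tools give only a lower bound for $A_r(L)/L$ tending to $(1-1/r)/2\approx0.045$. You have to exploit cancellation over a whole beat period.

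\textbf{How the paper closes this range.} It uses two steps.
\begin{enumerate}
\item On an interval of length $\ell\le2/r$, each sign of $\sigma(r\,\cdot)$ occupies at least $\ell-1/r$ (extend the interval to a full period). For $1<r\le2$ this gives agreement at least $1-1/r$ per unit block. With $L=m-\frac12+d$, this settles $m=1,2$ and all $r\ge5/4$.
\item For $1<r<5/4$ and $m\ge3$, it integrates by parts against the jumps of $\sigma$. With $\varepsilon=r-1$, $G(z)=\int_0^z\sigma$ and $C(y)=\frac12-\dist(y,2\Z)$, one gets $I_r(L)=\sigma(L)G(rL)/r+\frac2r\sum_{j=0}^{m-1}C(\varepsilon(j+\frac12))$. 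This is a Riemann sum for the triangle wave $C$. Comparing it with $H(m\varepsilon)/\varepsilon$, where $H(z)=\int_0^zC\ge-z/10$, gives $I_r(L)\ge-\frac12-\frac{13m}{40}$, which suffices for $m\ge3$.
\end{enumerate}
Some device of this kind, which averages the local correlation over the slow phase drift, is the missing idea in your proposal.

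\textbf{Smaller corrections.}
\begin{enumerate}
\item The initial half-block does not always contribute with the good sign. For $2<r<4$ one has $\int_0^{1/2}\sigma(rx)\,dx<0$; for example, it equals $-1/6$ at $r=3$. Use instead the exact agreement $1/(2r)$ on $[0,1/2]$ for $1\le r\le3$, since the second positive arc of $\sigma(r\,\cdot)$ starts at $3/(2r)\ge1/2$. For $r\ge3$, use the prefix bound $1/6$.
\item With that fix, your crude $1/r$-per-block bound plus $\max\{0,(d-1/r)/2\}$ on the last partial block already covers all $r\ge5/3$, including the extremal case (compare the paper's Cases~1 and~2). So the range $2\le r\le3$ is not the real obstacle.
\item In the sharpness check, $[1/2,5/6)$ is an arc of disagreement: there $\sigma=-1$ and $\sigma(3x)=+1$. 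The value $A_3(5/6)=1/6$ comes from $[0,1/6)$ alone.
\item $L\mapsto I_r(L)+3L/5$ decreases on disagreement arcs. Its local minima therefore occur where disagreement turns into agreement, not the reverse.
\end{enumerate}
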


\begin{proof}
By symmetry and the substitution $x=ut/\pi$, it suffices to prove
\begin{equation}\label{eq:F11}
I_r(L)\ge-\frac35L
\quad\Longleftrightarrow\quad A_r(L)\ge\frac L5,
\quad r\ge1,\ L>0.
\end{equation}
The case $r=1$ is immediate; assume $r>1$.

We begin with two elementary estimates, which will also be used in
the next lemma.
If $J$ is an interval of length $\ell$ and $\epsilon\in\{\pm1\}$, then
\begin{equation}\label{eq:interval-sign-count}
\begin{aligned}
\bigl|\{x\in J:\sigma(rx)=\epsilon\}\bigr|
&\ge\max\{0,(\ell-r^{-1})/2\}, &&\ell>0,\\
\bigl|\{x\in J:\sigma(rx)=\epsilon\}\bigr|
&\ge\max\{0,\ell-r^{-1}\}, &&\ell\le2/r.
\end{aligned}
\end{equation}
The first estimate follows because a primitive of $\sigma(r\,\cdot)$
has range $1/r$; for the second, extend $J$ to a full period, in which
each sign occupies length $1/r$. On an initial interval, we also have
\begin{equation}\label{eq:prefix-sign-count}
\bigl|\{x\in[0,z]:\sigma(x)=1\}\bigr|\ge\frac z3,
\quad z\ge0.
\end{equation}
Indeed, writing $z=2n+\tau$, with $n\ge0$ and $0\le\tau<2$, the
left-hand side is $n+p(\tau)$, where
\[
p(\tau)=
\begin{cases}
\tau,&0\le\tau\le1/2,\\
1/2,&1/2\le\tau\le3/2,\\
\tau-1,&3/2\le\tau<2,
\end{cases}
\qquad 3p(\tau)\ge\tau.
\]
For $z>0$, equality in \eqref{eq:prefix-sign-count} holds exactly at
$z=3/2$. Both the estimate and its equality condition extend to
$\sigma(r\,\cdot)$ by scaling.

If $L\le1/2$, then $\sigma=1$ on $[0,L]$, and \eqref{eq:prefix-sign-count} gives $A_r(L)\ge L/3$.
Otherwise let $L=m-1/2+d$, where $m\in\N$ and $0\le d<1$.
The intervals of constancy of $\sigma$ partition $[0,L]$ into one interval
of length $1/2$, then $m-1$ intervals of length $1$, and one of length $d$.

\emph{Case 1: $r\ge3$.} Applying \eqref{eq:prefix-sign-count} on
the first interval and \eqref{eq:interval-sign-count} on the others gives
\[
A_r(L)\ge\frac16+\frac{m-1}3+\max\Bigl(0,\frac{d-1/3}2\Bigr).
\]
Hence
\[
\begin{aligned}
5A_r(L)-L
&\ge\frac{2m-1}3-d+\frac52\max\left(0,d-\frac13\right)\\
&\ge\frac{2(m-1)}3\ge0,
\end{aligned}
\]
where the second inequality is sharp at $d=1/3$.

\emph{Case 2: $2\le r\le3$.} The first interval contributes $1/(2r)$:
the next positive interval of $\sigma(rx)$ starts at $3/(2r)\ge1/2$.
The first bound in \eqref{eq:interval-sign-count} gives
\[
A_r(L)\ge\frac1{2r}+\frac{m-1}2\Bigl(1-\frac1r\Bigr)+\frac12\max\Bigl(0,d-\frac1r\Bigr).
\]
Minimizing over $d$ gives
\[
\begin{gathered}
5A_r(L)-L\ge\frac{(3m-4)r-5m+8}{2r},\\
(3m-4)r-5m+8\ge
\begin{cases}
3-r\ge0,&m=1,\\
m>0,&m\ge2.
\end{cases}
\end{gathered}
\]

\emph{Case 3: $1<r\le2$.} The first interval again contributes $1/(2r)$.
Each remaining interval has length at most $1\le2/r$, so the second
bound in \eqref{eq:interval-sign-count} gives
\[
A_r(L)\ge\frac1{2r}+(m-1)\Bigl(1-\frac1r\Bigr)+\max\Bigl(0,d-\frac1r\Bigr).
\]
Minimizing over $d$ gives
\[
5A_r(L)-L\ge\frac{(8m-9)r+13-10m}{2r},
\]
where
\[
(8m-9)r+13-10m\ge
\begin{cases}
3-r>0,&m=1,\\[6pt]
7(r-1)>0,&m=2,\\[6pt]
\dfrac74>0,&m\ge3,\ r\ge\dfrac54.
\end{cases}
\]

It remains to consider $1<r<5/4$ and $m\ge3$. Let $\varepsilon=r-1$ and
\[
G(z)=\int_0^z\sigma(x)\,dx,\qquad
C(y)=\frac12-\dist(y,2\Z).
\]
Both functions are $2$-periodic, $|G|\le1/2$, and $C$ is $1$-Lipschitz.
Moreover,
\begin{equation}\label{eq:F52}
G\left(j+\frac12+y\right)=(-1)^jC(y),
\quad j\in\Z,\ y\in\R.
\end{equation}
The jump of $\sigma$ at $j+1/2$ is $2(-1)^{j+1}$. Integration by parts gives
\begin{equation}\label{eq:F53}
\begin{aligned}
I_r(L)&=\frac{\sigma(L)G(rL)}r
+\frac2r\sum_{j=0}^{m-1}(-1)^jG\left(r\left(j+\frac12\right)\right)\\
&=\frac{\sigma(L)G(rL)}r
+\frac2r\sum_{j=0}^{m-1}C\left(\varepsilon\left(j+\frac12\right)\right),
\end{aligned}
\end{equation}
where the second equality follows from \eqref{eq:F52}.
At $L=m-1/2$, take the right-hand value of $\sigma(L)$; the identity
extends by continuity.

Let
\[
H(z)=\int_0^zC(y)\,dy=
\begin{cases}
\dfrac{z(1-z)}2,&0\le z\le1,\\[6pt]
\dfrac{(z-1)(z-2)}2,&1\le z\le2.
\end{cases}
\]
On $[0,1]$, $H\ge0$; on $[1,2]$,
\[
10H(z)+z=5\left(z-\frac75\right)^2+\frac15>0.
\]
Since $H$ is $2$-periodic,
\begin{equation}\label{eq:F55}
H(z)\ge-\frac z{10},\qquad z\ge0.
\end{equation}
The Lipschitz bound gives
\[
\int_{j\varepsilon}^{(j+1)\varepsilon}C(y)\,dy
\le\varepsilon C\left(\varepsilon\left(j+\frac12\right)\right)
+\frac{\varepsilon^2}4.
\]
Summing and using \eqref{eq:F55}, we obtain
\[
\begin{aligned}
\sum_{j=0}^{m-1}C\left(\varepsilon\left(j+\frac12\right)\right)
\ge\frac{H(m\varepsilon)}\varepsilon-\frac{m\varepsilon}4
\ge-\frac m{10}-\frac m{16}=-\frac{13m}{80},
\end{aligned}
\]
since $\varepsilon<1/4$. Equation~\eqref{eq:F53} gives
$I_r(L)\ge-1/2-13m/40$.
Since
\[
\frac35\Bigl(m-\frac12\Bigr)-\frac12-\frac{13m}{40}
=\frac{11m-32}{40}>0,\quad m\ge3,
\]
we conclude that $I_r(L)\ge-3(m-1/2)/5\ge-3L/5$.

For sharpness, let $r=3$ and $L=5/6$. The waves agree on $(0,1/6)$
and disagree on $(1/6,5/6)$, so
\[
I_3(5/6)=\frac16-\frac23=-\frac12=-\frac35\cdot\frac56.
\]
\end{proof}

\Needspace{9\baselineskip}
For the fixed interval $[0,\pi/3]$, integrality of the frequencies
improves the constant and determines the equality case.

\begin{lemma}\label{lem:integerprefix}
For positive integers $b,c$,
\[
\frac3\pi\int_0^{\pi/3}s(bt)s(ct)\,dt\ge-\frac12.
\]
Equality holds if and only if $\{b,c\}=\{2,6\}$.
\end{lemma}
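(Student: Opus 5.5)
We reduce to the normalized setting of Lemma~\ref{thm:prefix}. If $b=c$ the integral is $\pi/3$, so assume $b<c$ by symmetry. The substitution $x=bt/\pi$ turns the claim into
\[
A_r(L)\ge\frac L4,\qquad L=\frac b3,\quad r=\frac cb>1,
\]
equivalently $I_r(L)\ge-L/2$, with equality exactly at $(b,c)=(2,6)$, where $L=2/3$ and $r=3$. A direct check confirms the equality case: on $[0,\pi/3]$ the two signs agree only on $(0,\pi/12)$ and disagree elsewhere, so the agreement fraction is $1/4$. The plan is to rerun the case analysis of Lemma~\ref{thm:prefix} with the target constant $1/4$ in place of $1/5$. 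Because $L=b/3$ is a multiple of $1/3$ and $r$ is rational, we expect the only slack to occur at finitely many small $b$, which we then treat by hand.

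\emph{Small $b$.} For $b=1$ we have $L=1/3<1/2$, so $\sigma=1$ on $[0,L]$. Then \eqref{eq:prefix-sign-count} applied to $\sigma(r\,\cdot)$ gives $A_r(L)\ge L/3>L/4$. For $b=2$ we have $L=2/3$, with $\sigma=1$ on $[0,1/2]$ and $\sigma=-1$ on $(1/2,2/3]$.
\begin{itemize}
\item For $r\ge3$, the first interval contributes at least $1/6$ by \eqref{eq:prefix-sign-count}, with equality only when $r\cdot\frac12$ equals $3/2$, i.e.\ $c=6$.
\item For $1<r<3$, the first interval contributes $1/(2r)>1/6$.
\end{itemize}
So $A\ge1/6=L/4$, with equality forcing $c=6$ together with zero agreement on $(1/2,2/3)$, which does hold for $c=6$. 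For $b=3,4,5,\dots$ up to a modest bound $b_0$, we use the same bookkeeping. That is, we apply \eqref{eq:prefix-sign-count} on $[0,1/2]$ and \eqref{eq:interval-sign-count} on the later constancy intervals of $\sigma$, with a tail of length $d\in\{0,1/3,2/3\}$ modulo $1$ (since $L=b/3$). For fixed small $b$, any $c$ beyond an explicit threshold is handled by the first bound in \eqref{eq:interval-sign-count}: the agreement tends to $L/2$, with an error of at most $(m+1)/(2r)$. The finitely many remaining pairs $(b,c)$ are checked by listing the sign intervals explicitly.

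\emph{Large $b$.} Here $m$ is large, and we follow Cases 1--3 of Lemma~\ref{thm:prefix}. In Case 1 ($r\ge3$) the lower bound $A\ge\frac16+\frac{m-1}3+\frac12\max(0,d-\frac13)$ yields $4A-L\ge\frac{m-1}{3}-\frac16-\dots$, which is positive once $m\ge2$. In Cases 2 and 3 the same expressions, with the coefficient $5$ replaced by $4$, become linear in $m$ with positive leading coefficient: $(m-2)r-3m+\dots$ in Case 2, and the analogue of $(8m-9)r+13-10m$ in Case 3. These are positive for $m$ beyond a small explicit bound, except when $r$ is close to $1$ or, in Case 2, close to $2$. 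The case $m\ge3$, $1<r<5/4$ is handled through \eqref{eq:F53}. There $I_r(L)\ge-\frac12+\frac2r\sum_j C(\varepsilon(j+\frac12))$, and the inequality $H(z)\ge-z/10$ already yields roughly $I\ge -\frac12-\frac{13m}{40}$, which is comfortably larger than $-\frac12(m-\frac12)$ once $m\ge 4$ or so.

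\emph{Main obstacle.} The hardest step is Case 2 with $r$ just below $3$ and small $m$. In this range the crude bound \eqref{eq:interval-sign-count} loses about $1/(2r)$ per interval, and the sharp point $r=3$, $L=5/6$ of Lemma~\ref{thm:prefix} sits nearby. The plan there is to exploit that $L\in\frac13\N$. Since $5/6\notin\frac13\N$, the extremal configuration of Lemma~\ref{thm:prefix} cannot occur. Concretely, for $2\le r\le3$ and $L\le 3$ we will compute $A_r(L)$ exactly as a piecewise linear function of $r$ for each of the finitely many values $L\in\{1,4/3,\dots,3\}$. We then verify $A_r(L)>L/4$ at its breakpoints, which are finitely many rationals. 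This establishes strict inequality for every $b\ge3$ and leaves $\{2,6\}$ as the unique equality case.
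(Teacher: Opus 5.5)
Your route is the paper's own: reduce to $A_r(L)\ge L/4$ with $L=b/3$ and $r=c/b$, rerun the three $r$-ranges of Lemma~\ref{thm:prefix} with constant $4$ in place of $5$, use \eqref{eq:F53} for $r$ near $1$, and settle small cases by hand. Your treatment of $b=1,2$, including the equality case $c=6$, is correct. However, the large-$b$ part has a hole in Case~3. With target $1/4$, the second bound in \eqref{eq:interval-sign-count}, minimized over the tail, gives
\[
2r\bigl(4A_r(L)-L\bigr)\ge(6r-8)m+10-7r .
\]
The coefficient of $m$ is therefore positive only for $r>4/3$; the threshold $5/4$ belongs to the constant $1/5$. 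At $r=5/4$ the right side is $\frac54-\frac m2$, which is negative for $m\ge3$ and decreases as $m$ grows. For instance, for $(b,c)=(15k,19k)$ the crude bound equals $(14-15k)/19<0$ for every $k\ge1$. You restrict the integration-by-parts step to $\varepsilon<1/4$, which is where $13m/80$ comes from. So the whole range $5/4\le r<4/3$ with large $b$ is covered by nothing in your plan.

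The fix is to run \eqref{eq:F53} for all $0<\varepsilon<1/3$ and keep the factor $1/r$:
\[
I_r(L)\ge-\frac{1/2+m/5+m\varepsilon/2}{1+\varepsilon}>-\frac12\Bigl(m-\frac12\Bigr)\ge-\frac L2
\quad\text{whenever}\quad \frac{3m}{10}-\frac34-\frac\varepsilon4>0 .
\]
This holds for every $m\ge3$, and it is what the paper does. It leaves only $m=2$ with $r<4/3$, i.e.\ $(b,c)\in\{(5,6),(6,7),(7,8),(7,9)\}$.

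Your Case~2 expression is also misstated. Minimizing over the tail gives
\[
4A_r(L)-L\ge(2m-3)\Bigl(\frac12-\frac1r\Bigr),
\]
which is at least $\frac{2m-3}{6}$ near $r=3$. So $r$ just below $3$ is no obstacle at all once $m\ge2$, and the exact computation of $A_r(L)$ you propose is unnecessary. (That function would in any case be piecewise affine in $1/r$, not in $r$.)

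The one place this bound is tight is $r=2$ with tail $\frac12$. Strictness there is needed for the equality statement. Your plan flags "$r$ close to $2$" as an exception but never treats it for large $b$; it follows from $I_2(m)=0$.

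Finally, the tail after the constancy intervals of $\sigma$ is $\{L+\frac12\}\in\{\frac16,\frac12,\frac56\}$, not $\{L\}\in\{0,\frac13,\frac23\}$. With the correct values, Case~1 gives $4A_r(L)-L\ge\frac{m-1}{3}$ for every $m$, and Case~2 leaves only $m=1$, i.e.\ $b\le4$. No unspecified cutoff $b_0$ and no finite search over all $b\le b_0$ is needed. The only explicit checks are:
\begin{itemize}
\item $(2,6)$ and $(3,9)$;
\item $b=3$ with $c\in\{6,7,8,9\}$;
\item the four Case~3 pairs listed above.
\end{itemize}
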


\begin{proof}
If $b=c$, the normalized integral is $1$. By symmetry, assume $b<c$.
In the notation of Lemma~\ref{thm:prefix}, let
$r=c/b$ and $L=b/3$. We prove $A_r(L)\ge L/4$.
For $b=1$, \eqref{eq:prefix-sign-count} gives $A_r(L)\ge L/3$.
For $b\ge2$, write
\[
L=m-\frac12+d,\qquad m\ge1,\qquad
 d\in\left\{\frac16,\frac12,\frac56\right\}.
\]

If $r\ge3$, \eqref{eq:interval-sign-count} and
\eqref{eq:prefix-sign-count} give
\[
4A_r(L)-L
\ge\frac m3-\frac16-d+2\max\left(0,d-\frac13\right)
\ge\frac{m-1}{3}.
\]
Equality requires $m=1$, $d\in\{1/6,1/2\}$, and equality in
\eqref{eq:prefix-sign-count} on the initial half interval. Its equality
condition gives $r/2=3/2$, hence
\[
(b,c)\in\{(2,6),(3,9)\}.
\]
The corresponding normalized integrals are $-1/2$ and $-1/3$.

If $2\le r\le3$, the first bound in \eqref{eq:interval-sign-count} gives
\[
4A_r(L)-L\ge m-\frac32+\frac{4-2m}{r}
-d+2\max\left(0,d-\frac1r\right).
\]
For $m\ge2$, the right-hand side is at least
\[
(2m-3)\left(\frac12-\frac1r\right)\ge0.
\]
Equality requires $r=2$ and $d=1/2$, but $I_2(m)=0$.
For $m=1$, the values $b=2,3,4$ give the respective lower bounds
\[
\frac2r-\frac23,\qquad 0,\qquad\frac13.
\]
The first vanishes only at $r=3$. For $b=3$, direct integration gives
\[
\frac3\pi\int_0^{\pi/3}s(3t)s(ct)\,dt
=
\begin{cases}
0,             & c=6,\\[3pt]
-2/7,     & c=7,\\[3pt]
-3/8,     & c=8,\\[3pt]
-1/3,     & c=9.
\end{cases}
\]

For $1<r<2$, the second bound in \eqref{eq:interval-sign-count} gives
\begin{equation}\label{eq:intprefixB}
\begin{aligned}
4A_r(L)-L
&\ge3m-\frac72+\frac{6-4m}{r}
-d+4\max\left(0,d-\frac1r\right)\\
&\ge3m-\frac72+\frac{5-4m}{r}.
\end{aligned}
\end{equation}
This is positive for $m=1$, and at least $1/4$ for
$m\ge2$, $r\ge4/3$.
For $1<r<4/3$ and $m\ge3$, let $\varepsilon=r-1$.
The integration-by-parts identity \eqref{eq:F53} and the Lipschitz
estimate following \eqref{eq:F55} give
\[
I_r(L)\ge-
\frac{1/2+m/5+m\varepsilon/2}{1+\varepsilon}
> -\frac12\left(m-\frac12\right)\ge-\frac L2,
\]
since
\[
\frac{3m}{10}-\frac34-\frac\varepsilon4>0,
\quad m\ge3,\quad 0<\varepsilon<1/3.
\]
For $m=2$, the remaining pairs are
$(b,c)=(5,6),(6,7),(7,8),(7,9)$.
The first line of \eqref{eq:intprefixB} gives the positive lower bounds
$2/3$, $2/7$, and $1/3$ for the first, second, and fourth pairs.
For the third,
\[
\frac3\pi\int_0^{\pi/3}s(7t)s(8t)\,dt=\frac{37}{56}.
\]
Thus equality occurs only for $\{b,c\}=\{2,6\}$.
\end{proof}

\subsection{Translated square waves on a grid}\label{sec:grid-correlation}

For a displacement $w\in\R$, the continuous correlation is
\begin{equation}\label{eq:triangular-correlation}
\Lambda(w):=\frac1\pi\int_0^\pi s(z)s(z+w)\,dz
=1-\frac2\pi\dist(w,2\pi\Z).
\end{equation}
Counting grid points in the arc where the product is negative gives
the following discrete bounds.

\begin{lemma}\label{lem:grid-correlation}
Let $N\in\N$ and $z,w\in\R$, and suppose that none of the cosines
sampled below vanishes. Then
\begin{equation}\label{eq:grid-correlation}
\begin{aligned}
\frac1N\sum_{j=0}^{N-1}
s\left(z+\frac{\pi j}N\right)
s\left(z+w+\frac{\pi j}N\right)
&\ge1-\frac2N\left\lceil
\frac N\pi\dist(w,2\pi\Z)\right\rceil\\
&\ge\Lambda(w)-\frac2N.
\end{aligned}
\end{equation}
\end{lemma}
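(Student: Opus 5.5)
The plan is to reduce the sum to counting grid points in the arc where the product of the two signs is negative. First, $s$ is $2\pi$-periodic and $s(y+\pi)=-s(y)$, so the product
\[
f(y)=s(y)s(y+w)
\]
is $\pi$-periodic. The points $z+\pi j/N$, $0\le j<N$, therefore form a complete set of $N$ equally spaced samples of one period of $f$, with spacing $\pi/N$. We may also replace $w$ by any $w'\equiv\pm w\pmod{2\pi}$ without changing the sign pattern up to reflection and translation. So we may assume $0\le w\le\pi$, and then $\delta:=\dist(w,2\pi\Z)=w$.

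Next, identify where $f=-1$ within one period. Work modulo $\pi$ and use that $s(y)$ changes sign exactly at the points $\pi/2+\pi\Z$. Then $s(y)$ and $s(y+w)$ differ exactly when the interval $(y,y+w)$ contains an odd number of points of $\pi/2+\pi\Z$. Since $0\le w\le\pi$, that number is $0$ or $1$. Hence, modulo $\pi$, the disagreement set is a single open arc of length $w=\delta$, namely $y\in(\pi/2-w,\pi/2)$, up to endpoints.

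The key counting step follows. An arc of length $\delta$ on the circle $\R/\pi\Z$ contains at most $\lceil \delta N/\pi\rceil$ points of a grid with spacing $\pi/N$; strictly, an open arc contains at most $\lceil \delta/(\pi/N)\rceil$ of them. The nonvanishing hypothesis ensures that no sample lies at an endpoint, so every sample is either $+1$ or $-1$. The average therefore equals
\[
1-\frac{2}{N}\#\{\text{negative samples}\},
\]
which gives the first inequality.

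For the second inequality, use $\lceil x\rceil< x+1$ with $x=N\delta/\pi$. This gives
\[
1-\frac2N\lceil x\rceil\ge 1-\frac{2\delta}{\pi}-\frac2N=\Lambda(w)-\frac2N,
\]
by the formula \eqref{eq:triangular-correlation}. That formula itself follows from the same arc description, since the disagreement set has proportion $\delta/\pi$ of a period.

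The main obstacle I expect is only the bookkeeping in the reduction to $0\le w\le\pi$, in particular checking that reflecting $w\mapsto -w$ keeps the sample set a full grid; this holds after substituting $z\mapsto z+w$. The grid-count bound for an open arc also needs care: the count is at most the ceiling rather than the floor plus one, which is why open arcs and the nonvanishing hypothesis matter.
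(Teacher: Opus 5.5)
Your proposal is correct and takes essentially the same route as the paper. Both arguments use the $\pi$-periodicity of the product and identify its negative set as an open arc of length $\dist(w,2\pi\Z)$ on $\R/\pi\Z$; they then count at most $\lceil N\delta/\pi\rceil$ grid points in that arc, using the nonvanishing hypothesis to keep samples off its endpoints, and finish with $\lceil t\rceil\le t+1$ for the second inequality. Your reduction to $0\le w\le\pi$ via $(z,w)\mapsto(z+w,-w)$ and your explicit arc $(\pi/2-w,\pi/2)$ only spell out details that the paper leaves implicit.
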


\begin{proof}
The product $s(u)s(u+w)$ is $\pi$-periodic, and its negative set is
an arc of length $\delta=\dist(w,2\pi\Z)$ on $\R/\pi\Z$.
An arc of length $\delta$ contains at most $\lceil N\delta/\pi\rceil$
points of the grid, provided its endpoints are not sampled. Hence
the average is at least $1-2\lceil N\delta/\pi\rceil/N$.
The cases $\delta=0,\pi$ give the exact values $1,-1$; the second
inequality follows from $\lceil t\rceil\le t+1$.
\end{proof}

In particular, let $a,b,c$ satisfy the hypotheses of
Proposition~\ref{lem:sampled-half}, and suppose
$\epsilon c=b+ka$ for some $\epsilon\in\{\pm1\}$ and $k\in\Z$.
Then
\[
F_a(\tau)=\frac1a\sum_{j=0}^{a-1}s(z_j)s(z_j+k\tau),
\qquad z_j=\frac{b(\tau+2\pi j)}a.
\]
Since $\gcd(2b,a)=1$, these points form a uniform grid modulo $\pi$.
Thus, for almost every $\tau$,
\begin{equation}\label{eq:congruent-grid}
F_a(\tau)\ge
1-\frac2a\left\lceil\frac a\pi\dist(k\tau,2\pi\Z)\right\rceil
\ge\Lambda(k\tau)-\frac2a.
\end{equation}

\subsection{Half-circles in cyclic groups}

An interval in $\Z/N\Z$ is a set of consecutive residues. Both the odd and even
moduli will be treated using the following translation estimate. If $I,J$ are intervals,
$|I|\in\{\lfloor N/2\rfloor,\lceil N/2\rceil\}$, and
$\mathcal J=rJ+d$, where $r$ is a unit modulo $N$, then
\[
\begin{aligned}
2\dist(kr,N\Z)
&=|I\symd(I+kr)|\\
&\le2|I\symd\mathcal J|+|\mathcal J\symd(\mathcal J+kr)|\\
&=2|I\symd\mathcal J|+|J\symd(J+k)|\\
&\le2|I\symd\mathcal J|+2k.
\end{aligned}
\]
Consequently,
\begin{equation}\label{eq:oddhalf-shift}
|I\symd(rJ+d)|\ge\dist(kr,N\Z)-k,
\quad k\in\N.
\end{equation}
For odd $N$, suitable choices of $k$ give the following bound.

\begin{lemma}\label{lem:oddhalf}\label{lem:oddhalf-third}
Let $N\ge5$ be odd, and let $I,J\subset\Z/N\Z$ be intervals,
each of length $(N-1)/2$ or $(N+1)/2$. If $r$ is a unit modulo $N$
with $r\not\equiv\pm1$, then, for every $d\in\Z/N\Z$,
\[
\left\lfloor\frac N3\right\rfloor
\le |I\symd(rJ+d)|
\le N-\left\lfloor\frac N3\right\rfloor.
\]
\end{lemma}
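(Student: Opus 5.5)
The plan is to prove the lower bound first and obtain the upper bound from it by complementation. Complements of allowed intervals are again allowed: if $J$ has length $(N\mp1)/2$, then $J^c=\Z/N\Z\setminus J$ is an interval of length $(N\pm1)/2$. Moreover $rJ^c+d=(rJ+d)^c$, since $x\mapsto rx+d$ is a bijection, and $|I\symd X^c|=N-|I\symd X|$ for every $X$. Applying the lower bound to the pair $(I,J^c)$ therefore gives $|I\symd(rJ+d)|\le N-\lfloor N/3\rfloor$. It remains to show $|I\symd(rJ+d)|\ge\lfloor N/3\rfloor$.

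For the lower bound I would use \eqref{eq:oddhalf-shift} with a well-chosen $k$, together with three symmetries that leave the desired inequality invariant. First, replacing $r$ by $-r$ changes nothing, since $\dist(kr,N\Z)=\dist(-kr,N\Z)$. Second, replacing $(I,J,r,d)$ by $(J,I,r^{-1},-r^{-1}d)$ preserves the cardinality, because
\[
|I\symd(rJ+d)|=|r^{-1}(I-d)\symd J|.
\]
So we may work with whichever of $\pm r^{\pm1}$ is most convenient. Let $s\in[2,(N-1)/2]$ be the least absolute residue of $r$. The hypothesis $r\not\equiv\pm1$ gives $s\ge2$.

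The case split is as follows.

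\emph{Large $s$.} If $s\ge\lfloor N/3\rfloor+1$, take $k=1$: the bound is $s-1\ge\lfloor N/3\rfloor$.

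\emph{Small $s$.} Otherwise $2\le s\le N/3$. Here I would choose $k$ so that $ks$ lies as close as possible to $N/2$, with $k\approx N/(2s)$. This gives
\[
\dist(kr,N\Z)-k\gtrsim\frac{N-s}{2}-\frac{N}{2s},
\]
which is at least $N/3$ once $s$ is moderately large, roughly $s\ge4$ up to rounding. For $s=2,3$ this is too weak, and I would pass to the inverse instead. For $r\equiv\pm2$ the inverse is $\pm(N+1)/2$, whose distance to $N\Z$ is $(N-1)/2$, so $k=1$ gives $(N-1)/2-1\ge\lfloor N/3\rfloor$ for all $N\ge5$. Similarly, for $r\equiv\pm3$ with $3\nmid N$ the inverse is $\pm(N\pm1)/3$ or $\pm(2N\pm1)/3$. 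This has least absolute residue about $N/3$, and then $k=1$ or $k=2$ should suffice.

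The main obstacle is the careful rounding in the intermediate range of $s$, and the case where both $r$ and $r^{-1}$ have small least residues. An example is $r\equiv4$ with inverse near $N/4$, where one may need $k=2$ applied to the inverse. I would first try to show that for $s\ge4$ a suitable $k\in\{\lfloor N/(2s)\rfloor,\lceil N/(2s)\rceil\}$ works outright, using $\dist(ks,N\Z)\ge(N-s)/2$ for the best $k$. If the inequality fails by a unit for small $N$, I would verify the finitely many remaining pairs $(N,r)$ directly, say $N\le 15$, by counting $|I\symd(rJ+d)|$ over all $d$ and all four length combinations.
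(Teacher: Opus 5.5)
Your reductions are correct and match the paper's framework: complementation for the upper bound, the shift bound \eqref{eq:oddhalf-shift}, and the symmetries $r\mapsto\pm r^{\pm1}$ (your inversion identity is right). The gap is in how you dispose of $s=3$. You discard $r\equiv\pm3$ as too weak because of the crude estimate $(N-s)/2-N/(2s)$, pass to the inverse, and expect $k=1$ or $2$ to suffice.

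This fails for every $N\equiv1\pmod 6$ with $N\ge13$. Write $q=\lfloor N/3\rfloor=(N-1)/3$. Then $3q\equiv-1$, so $r^{-1}\equiv\mp q$, and $k=1,2$ give only $q-1$. In fact no $k$ works for $\pm q$. Only $k\le q/2$ can give $q$, and writing $k=3m+j$ with $j\in\{0,1,2\}$, one gets $\dist(kq,N\Z)-k$ equal to $-2m$, $q-4m-1$, $q-2m-1$ respectively. Concretely, for $N=13$ and $r=3$ one has $r^{-1}\equiv-4$, and $\max_k\bigl(\dist(4k,13\Z)-k\bigr)=3<4$.

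The same family also breaks your claim that $k\in\{\lfloor N/(2s)\rfloor,\lceil N/(2s)\rceil\}$ works for $s\ge4$. The value $s=(N-1)/3$ lies in your \emph{small $s$} range with $k\in\{1,2\}$, and its inverse is $\mp3$, which sends you back. Since this family is infinite, no finite check can absorb it.

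The missing step is that $r=3$ itself works, with exactly rounded $k$ and no slack at all:
\begin{itemize}
\item For $N\equiv1\pmod6$, take $k=(N-1)/6$. Then $\dist(3k,N\Z)-k=\frac{N-1}2-\frac{N-1}6=\lfloor N/3\rfloor$.
\item For $N\equiv5\pmod6$, take $k=(N+1)/6$. This gives $\frac{N-1}2-\frac{N+1}6=\lfloor N/3\rfloor$.
\end{itemize}
This is the paper's choice $k=q/2$, resp.\ $(q+1)/2$. The case $s=(N-1)/3$ is then reduced to it via $-r^{-1}=3$.

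Separately, your heuristic estimate falls below $N/3$ once $s$ is within about $3$ of $N/3$, so that range needs exact computation. There $k=2$ gives $N-2s-2$, which is at least $\lfloor N/3\rfloor$ exactly when $s\le\lfloor N/3\rfloor-1$, or when $s=\lfloor N/3\rfloor$ and $N\equiv2\pmod3$. The paper packages the intermediate range cleanly as the condition $q+k\le kr\le N-q-k$. This is met by $k=\lceil q/(r-1)\rceil$ for $4\le r\le q-1$ once $q\ge10$.

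Finally, the finite verification must go beyond $N\le15$. For $N=21$ and $r=4$, no $k$ works (the maximum is $6<7$), and one needs $-r^{-1}\equiv5$ with $k=2$. The paper tabulates all $N<31$.
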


\begin{proof}
Let $\mathcal J=rJ+d$. The complement of $\mathcal J$ is again an
affine image of an interval of one of the two specified lengths, so
it suffices to prove the lower bound.

Let $q=\lfloor N/3\rfloor$. Changing the sign of $r$ and reversing $J$,
we may assume $2\le r\le(N-1)/2$. Inversion is also allowed:
multiplication by $r^{-1}$ and exchange of $I,J$ preserve the
symmetric-difference size, after a translation.
For $r=2$, replacing $r$ by $-r^{-1}$ gives $r=(N-1)/2$.

Suppose $N\ge31$, so $q\ge10$.
If $r\ge q+1$, take $k=1$ in \eqref{eq:oddhalf-shift}.
If $r=q$, coprimality excludes $N=3q$.
For $N=3q+1$, replacing $r$ by $-r^{-1}$ gives $r=3$;
for $N=3q+2$, replacing $r$ by $r^{-1}$ gives $r=(N-3)/2\ge q+1$.
For $r=3$, coprimality leaves
\[
(N,k)=(3q+1,q/2)\quad\text{or}\quad
(N,k)=(3q+2,(q+1)/2).
\]
Here $q$ is even in the first case and odd in the second, and
\[
k(r-1)\ge q,\qquad k(r+1)\le N-q.
\]

For $4\le r\le q-1$, let $k=\lceil q/(r-1)\rceil$.
Then $k(r-1)\ge q$. If $r-1\ge q/2$, we have $k=2$ and
$k(r+1)\le2q$. Otherwise $4\le r\le(q+1)/2$, and
\[
k(r+1)\le q+r+\frac{2(q-1)}{r-1}\le2q.
\]
The last inequality follows by convexity in $r$: the endpoint values
are $(5q+10)/3$ and $(3q+9)/2$, both at most $2q$ for $q\ge10$.
Thus in each remaining case
\[
q+k\le kr\le N-q-k,
\]
and \eqref{eq:oddhalf-shift} gives $|I\symd\mathcal J|\ge q$.

For $N<31$, the following representatives cover the nontrivial units
under sign and inversion. Each listed $k$ satisfies
$\dist(kr,N\Z)-k\ge\lfloor N/3\rfloor$.
\[
\renewcommand{\arraystretch}{1.25}
\begin{array}{c@{\qquad}l@{\qquad}l}
\toprule
N&r&k\\
\midrule
5&2&1\\
7&3&1\\
9&4&1\\
11&5,4&1,1\\
13&6,3,5&1,2,1\\
15&7,4&1,2\\
17&8,6,4,7&1,1,2,1\\
19&9,3,4,7&1,3,2,1\\
21&10,5,8&1,2,1\\
23&11,8,6,9,10&1,1,2,1,1\\
25&12,3,6,7,9&1,4,2,2,1\\
27&13,7,11,10&1,2,1,1\\
29&14,10,7,6,11,13,12&1,1,2,2,1,1,1\\
\bottomrule
\end{array}
\]
This proves the lower bound for every odd $N\ge5$.
\end{proof}

\subsection{Proof of the conditional correlation bound}\label{sec:conditional-proof}

\begin{proof}[Proof of Proposition~\ref{lem:sampled-half}]
Replacing $j$ by $-j$ gives $F_a(-\tau)=F_a(\tau)$, and hence
\[
\rho=\frac3\pi\int_0^{\pi/3}F_a(\tau)\,d\tau.
\]
For $a=1$, Lemma~\ref{lem:integerprefix} gives the bound, with equality
exactly when $\{b,c\}=\{2,6\}$. Assume henceforth that $a\ge3$.
We prove strict inequality when $c\equiv\pm b\pmod a$; in the other
case, only the five-point grid can give equality.

\emph{Congruent frequencies: $c\equiv\pm b\pmod a$.}
First suppose $a=3$. Let $\epsilon\in\{\pm1\}$ and $k\in\Z$ satisfy
$\epsilon c=b+3k$. Since $b\ne c$, we have $k\ne0$.
The sampled products are $s(z_j)s(z_j+k\tau)$, with
$z_j=b(\tau+2\pi j)/3$. These points are equally spaced modulo $\pi$.
In a $\pi$-period, the negative set of $s(z)s(z+w)$ is an interval of
length $\dist(w,2\pi\Z)$. Counting its grid points gives
\[
F_3(\tau)\ge-\frac13+\frac23H(k\tau),\qquad
H(t)=\one\{\cos t>1/2\}-\one\{\cos t<-1/2\}.
\]
For $n=|k|$, integration over full periods and the remainder gives
\[
\frac3\pi\int_0^{\pi/3}H(k\tau)\,d\tau
=
\begin{cases}
\dfrac1n,&n\equiv1,2\pmod6,\\[6pt]
0,&n\equiv0,3\pmod6,\\[6pt]
-\dfrac1n,&n\equiv4,5\pmod6.
\end{cases}
\]
This is at least $-1/4$, so $\rho\ge-1/2$.
The inequality is strict: near $\tau=0$, $F_3(\tau)=1$, while its
pointwise lower bound is $1/3$.

Now suppose $a\ge5$. Choose $\epsilon\in\{\pm1\}$ and a nonzero
integer $k$ with $\epsilon c=b+ka$. By \eqref{eq:congruent-grid},
\[
F_a(\tau)\ge\Lambda(k\tau)-\frac2a.
\]
For $n=|k|$,
\[
\frac3\pi\int_0^{\pi/3}\Lambda(k\tau)\,d\tau
=
\begin{cases}
\dfrac2{3n},&n\equiv1,2\pmod6,\\[6pt]
0,&n\equiv0,3\pmod6,\\[6pt]
-\dfrac2{3n},&n\equiv4,5\pmod6.
\end{cases}
\]
For $a\ge7$, this gives
\[
\rho\ge-\frac16-\frac27=-\frac{19}{42}>-\frac12.
\]
For $a=5$ and $n\notin\{4,5\}$, it gives
\[
\rho\ge-\frac1{15}-\frac25=-\frac7{15}>-\frac12.
\]
For $n=4,5$, the ceiling bound in \eqref{eq:congruent-grid} gives
\[
F_5(\tau)\ge1-\frac25
\left\lceil\frac5\pi\dist(n\tau,2\pi\Z)\right\rceil.
\]
The two ceiling averages are
\[
\begin{aligned}
\frac34\left(\int_0^1\lceil5u\rceil\,du+
\int_{2/3}^1\lceil5u\rceil\,du\right)&=\frac{17}{5},\\[6pt]
\frac35\left(\int_0^1\lceil5u\rceil\,du+
\int_{1/3}^1\lceil5u\rceil\,du\right)&=\frac{83}{25}.
\end{aligned}
\]
Hence $\rho\ge-9/25$ for $n=4$, and $\rho\ge-41/125$ for $n=5$.
Both bounds exceed $-1/2$, completing the congruent case.

\emph{Noncongruent frequencies: $c\not\equiv\pm b\pmod a$.}
Here $a\ge5$. If $a\ge7$, the two positive sets on the sampled grid are affine images of intervals of lengths $(a\pm1)/2$.
Their relative multiplier is not $\pm1$. Lemma~\ref{lem:oddhalf} gives
\[
F_a(\tau)\ge-1+\frac2a\left\lfloor\frac a3\right\rfloor>-\frac12,
\]
since $\lfloor a/3\rfloor>a/4$ for odd $a\ge7$.

It remains to treat $a=5$.
Choose $\epsilon\in\{\pm1\}$ and $k\in\Z$ with
\[
\epsilon c=2b+5k.
\]
After reindexing, the products are $s(z_j)s(2z_j+\delta)$, where
\[
z_j=z+\frac{2\pi j}{5},\qquad z=\frac{b\tau}{5},\qquad
\delta=k\tau.
\]
Reducing $z$ modulo $2\pi/5$ amounts to another reindexing. Choose
$z\in(-\pi/10,3\pi/10)$, and let $\zeta=2z+\delta\pmod{2\pi}$.
The first positive set is $\{0,1,4\}$ for $|z|<\pi/10$ and
$\{0,4\}$ for $\pi/10<z<3\pi/10$.
The second positive set consists of consecutive indices in the order
$(0,3,1,4,2)$. Neither complement of the first set can occur.
Thus $F_5\ge-3/5$, with equality exactly in the following two cases:
\[
\renewcommand{\arraystretch}{1.3}
\begin{array}{c@{\qquad}c}
\toprule
z&\zeta\pmod{2\pi}\\
\midrule
(-\pi/10,\pi/10)&(-\pi/10,\pi/10)\\
(\pi/10,3\pi/10)&(13\pi/10,15\pi/10)\\
\bottomrule
\end{array}
\]
Otherwise $F_5\ge-1/5$. Since $\delta=\zeta-2z\pmod{2\pi}$,
\[
F_5(\tau)=-\frac35
\quad\Longrightarrow\quad
\dist(k\tau,\pi\Z)<\frac{3\pi}{10}.
\]
For $n=|k|>0$, the proportion of $\tau\in[0,\pi/3]$ satisfying the
last condition is
\[
\begin{cases}
\dfrac35,&n\equiv0\pmod3,\\[6pt]
\dfrac35+\dfrac3{10n},&n\equiv1\pmod3,\\[6pt]
\dfrac35-\dfrac3{10n},&n\equiv2\pmod3.
\end{cases}
\]
For $n\ge2$ this is at most $27/40$, whence
\[
\rho\ge-\frac15-\frac25\cdot\frac{27}{40}
=-\frac{47}{100}>-\frac12.
\]
If $|k|=1$, interchange $b,c$ and replace $\epsilon$ by $-\epsilon$.
The new integer is
\[
k'=-\epsilon(b+2k).
\]
For $k=1$, $|k'|\ge3$. For $k=-1$, the condition $|k'|<2$ forces
$b\in\{1,2,3\}$, giving only the unordered pairs $\{1,2\}$ and
$\{1,3\}$. The latter has
\[
F_5(\tau)=
\begin{cases}
-\dfrac35,&0<\tau<\pi/6,\\[6pt]
-\dfrac15,&\pi/6<\tau<\pi/3,
\end{cases}
\qquad \rho=-\frac25.
\]
The pair $\{1,2\}$ belongs to the case $k=0$, after ordering.

If $k=0$, positivity forces $\epsilon=1$ and $c=2b$.
Counting the five-point grid gives
\[
F_5(\tau)=-\frac15\bigl(2s(b\tau)+s(b\tau)s(2b\tau)\bigr).
\]
For $b=1$, the averages of the two terms $s(\tau)$ and
$s(\tau)s(2\tau)$ are $1$ and $1/2$, so $\rho=-1/2$.
For $b\ge2$, the primitive of $s$ gives
\[
\frac3\pi\int_0^{\pi/3}s(b\tau)\,d\tau
\le
\begin{cases}
\dfrac12,&b=2,\\[6pt]
\dfrac3{2b}\le\dfrac12,&b\ge3.
\end{cases}
\]
Since $s(b\tau)s(2b\tau)\le1$, we obtain $\rho\ge-2/5$.

Thus the only equality case with $a\ge3$ is
$(a,\{b,c\})=(5,\{1,2\})$, as required.
\end{proof}

\subsection{Even grids and quadrant kernels}

For the even $3$-pair and $7$-pair configurations of
Section~\ref{sec:mixed}, we need the even analogue of
Lemma~\ref{lem:oddhalf}. The same translation estimate
\eqref{eq:oddhalf-shift} gives the required bound.

\begin{lemma}\label{lem:evenhalf}
Let $c\in\N$ with $c\ge2$, and put $N=4c$ and $q=2\lfloor N/6\rfloor$.
Let $I\subset\Z/N\Z$ be an interval of $2c$ points.
If $r$ is a unit modulo $N$ with $r\not\equiv\pm1$, then, for every
$d\in\Z/N\Z$,
\begin{equation}\label{eq:evenhalf-third}
q\le|I\symd(rI+d)|\le N-q.
\end{equation}
In particular,
\[
c\le|I\symd(rI+d)|\le3c,
\]
and, if $c\ge5$,
\[
c+1\le|I\symd(rI+d)|\le3c-1.
\]
\end{lemma}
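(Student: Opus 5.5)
The plan mirrors Lemma~\ref{lem:oddhalf}: combine the translation estimate \eqref{eq:oddhalf-shift} with a suitable choice of the shift $k$. Here $N=4c$ and both intervals have exactly $N/2$ points. The complement of $rI+d$ is again a translate of $rI$, since the complement of $I$ is $I+2c$. Hence the upper bound follows from the lower bound applied to $I\symd(rI+d+2rc)$, and it suffices to prove $|I\symd(rI+d)|\ge q$.

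Next we reduce the multiplier. Replacing $r$ by $-r$ amounts to reflecting $I$, which is again an interval of the same length after translation. Replacing $r$ by $r^{-1}$ and multiplying the whole configuration by $r^{-1}$ exchanges the roles of the two intervals, again up to translation. So we may assume $3\le r\le N/2-1$; here $r$ is odd because $N$ is even. We then choose $k\in\N$ with
\[
q+k\le kr\le N-q-k \pmod N,
\]
that is, $\dist(kr,N\Z)-k\ge q$, and \eqref{eq:oddhalf-shift} gives the bound.

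The choice of $k$ splits by the size of $r$.
\begin{itemize}
\item If $q+1\le r\le N-q-1$, take $k=1$. This covers all $r\ge q+1$ in the reduced range, since $r\le N/2$.
\item If $r\le q$, take $k=\lceil q/(r-1)\rceil$, as in the odd lemma, and check $k(r+1)\le N-q$ using $q\approx N/3$. The convexity argument gives $k(r+1)\le q+r+2(q-1)/(r-1)$, whose endpoint values are roughly $(5q)/3$ and $(3q)/2$. Both stay below $N-q\approx 2N/3\approx 2q$ once $N$ is moderately large.
\item The boundary value $r=3$ is handled separately: take $k$ with $2k\ge q$ and $4k\le N-q$, for instance $k=q/2$, which works because $q$ is even and $2q\le N-q$.
\end{itemize}
Finally we record $q\ge c$, with $q\ge c+1$ for $c\ge5$; this follows from $2\lfloor 2c/3\rfloor\ge c$, with the strict version once $c\ge5$. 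That gives the two stated consequences.

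The main obstacle is the small moduli, where $q$ is too close to $N/3$ for the inversion reductions and the convexity inequality to have slack. This concerns $N=8,12,\dots$ up to perhaps $N\approx 40$, and also $r$ near $q$, where $r^{-1}$ or $-r^{-1}$ must be used to move $r$ into the $k=1$ range. I would handle these exactly as in Lemma~\ref{lem:oddhalf}. For each $c$ below the threshold, list representatives of the nontrivial units modulo $4c$ under sign change and inversion, and for each record a witness $k$ with $\dist(kr,N\Z)-k\ge q$. If some unit admits no such $k$, the fallback is to compute $|I\symd(rI+d)|$ directly over all $d$, which is a finite check on a cyclic group of order at most about $40$.
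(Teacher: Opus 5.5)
Your proposal is correct and follows essentially the same route as the paper: reduce to the lower bound via complements, reduce by sign to odd $3\le r\le N/2-1$, and apply the translation estimate \eqref{eq:oddhalf-shift} with $k=1$ for $r\ge q+1$, $k=q/2$ for $r=3$, and $k=\lceil q/(r-1)\rceil$ (including the split at $r-1\ge q/2$) once $q\ge10$, i.e.\ $N\ge32$. The finite check you defer is short. Because $r$ is odd and $q$ is even, neither inversion nor special care for $r$ near $q$ is needed, and for $N<32$ only $(N,r)=(24,5),(24,7),(28,5)$ remain, each settled by $k=2$.
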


\begin{proof}
Let $\mathcal J=rI+d$. By \eqref{eq:oddhalf-shift},
\begin{equation}\label{eq:evenhalf-shift}
|I\symd\mathcal J|\ge\dist(kr,N\Z)-k,\quad k\in\N.
\end{equation}
Its complement is another translate of $rI$, so it suffices to prove
 the lower bound. Reversing $r$ and translating,
we may assume $3\le r\le N/2-1$; every unit modulo $N$ is odd.

The integer $q$ is even and $3q\le N$.
If $r\ge q+1$, take $k=1$; if $r=3$, take $k=q/2$.
Both choices give $\dist(kr,N\Z)-k\ge q$.
For $5\le r\le q-1$ and $q\ge10$, let
$k=\lceil q/(r-1)\rceil$. Then $k(r-1)\ge q$.
If $r-1\ge q/2$, then $k\le2$ and $k(r+1)\le2q$.
Otherwise $r\le q/2$, and
\[
k(r+1)\le q+r+\frac{2(q-1)}{r-1}\le2q.
\]
For the last inequality, convexity on $[4,q/2]$ reduces the claim to
\[
\frac{5q+10}{3}\le2q,\qquad
\frac{3q}{2}+4+\frac4{q-2}\le2q,
\]
which hold for $q\ge10$. Hence
\[
q+k\le kr\le N-q-k,
\]
and \eqref{eq:evenhalf-shift} proves the lower bound.

For $N<32$, the preceding cases leave only
\[
(N,r)=(24,5),\ (24,7),\ (28,5).
\]
Taking $k=2$ proves the bound in each case.
Finally, $q\ge c$ for $c\ge2$, and $q\ge c+1$ for $c\ge5$.
\end{proof}

For the smallest even modulus, the sampled correlation reduces to
one of the quadrant kernels
\[
K_\pm(t)=\frac12\bigl(\sgn\cos\alpha t\,\sgn\cos\beta t
\pm\sgn\sin\alpha t\,\sgn\sin\beta t\bigr),
\qquad \alpha,\beta>0.
\]
Their averages on initial intervals satisfy the same lower bound.

\begin{lemma}\label{lem:quadrant}
For every $T>0$,

\[
\int_0^TK_\pm(t)\,dt\ge-\frac T2.
\]
\end{lemma}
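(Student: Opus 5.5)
We would prove Lemma~\ref{lem:quadrant} by imitating the block decomposition used in the proof of Lemma~\ref{thm:prefix}. The first step is to rewrite the kernels combinatorially. Let $q(\theta)\in\Z/4\Z$ be the index of the open quadrant containing $\theta$. Then $\sgn\cos\theta\,\sgn\cos\phi+\sgn\sin\theta\,\sgn\sin\phi$ equals $2$, $0$ or $-2$ according as $\theta,\phi$ lie in the same, adjacent or opposite quadrants. Hence
\[
K_+(t)=\operatorname{Re} i^{\,q(\alpha t)-q(\beta t)},\qquad
K_-(t)=\operatorname{Re} i^{\,q(\alpha t)-q(-\beta t)},
\]
since $\sgn\sin(-\beta t)=-\sgn\sin\beta t$. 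Both kernels are symmetric in $\alpha,\beta$, so by scaling we may take $\alpha=1$ and $\beta=r\ge1$. The claim is then $\int_0^TK\ge-T/2$, which is equivalent to
\[
|\{\text{opposite quadrants}\}|\le\tfrac12\bigl|\{\text{adjacent quadrants}\}\bigr|+\tfrac32\bigl|\{\text{same quadrant}\}\bigr|
\]
on $[0,T]$.

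Next we partition $[0,T]$ into the quarter-periods of the slow rotation: an initial run of complete blocks of length $\pi/2$ on which $q(t)$ is constant, followed by a final block of length $d<\pi/2$. On each block the slow index $q_0$ is fixed, and the integrand is the periodic step function of $\beta t$ taking the values $\operatorname{Re} i^{q_0-q}$, which run cyclically through $1,0,-1,0$ on quadrants of length $\pi/(2r)$. Its mean over a full period is $0$. Every cyclic shift of $(1,0,-1,0)$ has partial sums of range exactly $1$, so the primitive has range $\pi/(2r)$. This is the analogue of the first bound in \eqref{eq:interval-sign-count}: a block of length $\ell$ contributes at least $-\min\{\ell,\pi/(2r)\}$, and at least $-\max\{0,\ell-\pi/(2r)\}$ when $\ell$ is at most one period. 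The first block supplies a surplus, as in \eqref{eq:prefix-sign-count}. For $K_+$ both angles start in quadrant $0$, so $K_+=1$ on $[0,\pi/(2r))$. For $K_-$ the two angles start in adjacent quadrants, so $K_-=0$ initially and becomes negative only after time $\pi/(2r)$. For $r\ge2$ we would add up these block bounds, letting the initial surplus absorb the deficit of the final partial block. We then minimize over $d$, as in Cases~1 and~2 of Lemma~\ref{thm:prefix}, and check that each resulting expression is nonnegative.

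For $1<r<2$ we would use the relative angle instead. For $K_+$ the quadrant difference changes only when one of the two rotations crosses an axis, and the relative phase $(r-1)t$ drifts slowly. An integration by parts as in \eqref{eq:F53}, with $G$ replaced by the primitive of the quadrant step function, expresses $\int_0^TK_+$ as a bounded boundary term plus a Riemann sum of a $1$-Lipschitz triangle-type function of $\varepsilon(j+\tfrac12)$. The bound $H(z)\ge-z/10$ from \eqref{eq:F55}, or its quadrant analogue, then gives the estimate with room to spare once many blocks are present. For $K_-$ the relative phase $(1+r)t$ rotates with speed at least $2$, so the argument of the previous paragraph already applies, or with the roles of the angles exchanged. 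The remaining small cases, with $m\le2$ full blocks and $r$ in a bounded window, would be checked directly with the second bound of \eqref{eq:interval-sign-count}.

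The main obstacle is the final partial block when $T$ is not a multiple of $\pi/2$. There the crude bound can reach $-d$, and the surplus from the first block must be shown to compensate uniformly in $d$ and $r$. The critical regime is $r$ near $2$ to $3$, the analogue of the sharp configuration $v=3u$ in Lemma~\ref{thm:prefix}. We would first treat this regime by an explicit piecewise-linear minimization in $d$, as in Case~2 of Lemma~\ref{thm:prefix}, and fall back on the drift argument only for $r$ close to $1$.
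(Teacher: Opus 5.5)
\textbf{Gap: the kernel $K_-$ is not handled.} This matters because $K_-$ is exactly the kernel for which the lemma is sharp.

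Normalize $\alpha=1$, $\beta=r\ge2$. On the first slow quarter-period, $K_-$ runs through $0,-1,0,1$ on fast quarter-periods of length $\pi/(2r)$, so its primitive there lies in $[-\pi/(2r),0]$. The delay before $K_-$ turns negative is therefore not a surplus. At $T=\pi/r$ one has $\int_0^TK_-=-T/2$ exactly.

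Your phase-independent block bounds then give only
$\int_0^TK_-\ge-m\pi/(2r)-\min\{d,\pi/(2r)\}$.
This is smaller than $-T/2=-(m\pi/2+d)/2$ whenever $d=\pi/(2r)$ and $r<2+1/m$. For instance, at $r=2$, $T=3\pi/4$ the bound is $-\pi/2<-3\pi/8$, yet the true integral is $0$. The deficit $-\pi/4$ of the first block is repaid on $[\pi/2,3\pi/4)$, where $K_-=1$. No piecewise-linear minimization in $d$ recovers this; you would have to track the fast phase at every block endpoint.

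For $1<r<2$ the per-block bound $-\pi/(2r)$ is already worse than the average $-\pi/4$ per block that is needed. So the remark that the relative phase $(1+r)t$ is fast does not make the previous argument apply.

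\textbf{The missing idea} is the paper's pairing argument, which is uniform in the ratio and needs no case split. In the fast variable $w=2\beta t/\pi$, $K_-=-1$ exactly on $E=\{D(w)\equiv1\pmod4\}$, where $D(w)=\lfloor w\rfloor+\lfloor\lambda w\rfloor$ and $\lambda=\alpha/\beta$. For $w\in E$ one has $D(w)-D(w-1)\in\{1,2\}$, so $E\cap(E-1)=\varnothing$. Since $E\subset[1,W]$, it follows that $|E|\le W/2$, and $\int K_-\ge-|E|$ finishes the proof.

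\textbf{On $K_+$.} Your plan is sound there but heavier than necessary. For $r\ge2$ the crude block bounds do close, because the first full block contributes at least $0$. For $1<r<2$, summation by parts leads to a Riemann sum of a triangle wave, but you leave the small cases open. The paper instead works in the drift variable $u=2(\beta-\alpha)t/\pi$. There $D=n-m$ satisfies $|D(u)-u|<1$, so $K_+=-1$ only on $\bigcup_j(4j+1,4j+3)$, a set of measure at most $U/2+1/2$. This is offset on $[0,1]$, where $K_+\ge0$ and $K_+=1$ on a set of measure at least $1/2$.

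\textbf{Two smaller slips.}
\begin{enumerate}
\item Your ``equivalent'' reformulation should read $|\mathrm{opp}|\le|\mathrm{adj}|+3|\mathrm{same}|$. The version you wrote is false: the sharp example has $|\mathrm{same}|=0$ and $|\mathrm{adj}|=|\mathrm{opp}|=T/2$.
\item The block bound $-\max\{0,\ell-\pi/(2r)\}$ fails for a block lying inside a negative quadrant. Bounds of that shape hold only for initial segments.
\end{enumerate}
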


\begin{proof}
If $n\pi/2<\theta<(n+1)\pi/2$, then
\[
(\sgn\cos\theta,\sgn\sin\theta)
=\sqrt2\bigl(\cos\frac{(2n+1)\pi}4,\sin\frac{(2n+1)\pi}4\bigr).
\]
Thus, if $\alpha t$ and $\beta t$ have quadrant indices $m$ and $n$,
\begin{equation}\label{eq:Kpm}
K_-(t)=-\sin\left(\frac{\pi(n+m)}2\right),\qquad
K_+(t)=\cos\left(\frac{\pi(n-m)}2\right).
\end{equation}
We retain the notation $K_\pm$ for the kernels after rescaling the
independent variable.

\emph{The kernel $K_-$.} If $\alpha=\beta$, then $K_-\equiv0$.
Otherwise, by symmetry, assume $\alpha<\beta$. Let
$w=2\beta t/\pi$, $W=2\beta T/\pi$, and
$\lambda=\alpha/\beta\in(0,1)$, and write
$D(w)=\lfloor w\rfloor+\lfloor\lambda w\rfloor$.
By \eqref{eq:Kpm}, the negative set is
$E=\{w\in[0,W]:D(w)\equiv1\pmod4\}$.
It lies in $[1,W]$, and for $w\in E$,
\[
D(w)-D(w-1)
=1+\lfloor\lambda w\rfloor-\lfloor\lambda(w-1)\rfloor\in\{1,2\}.
\]
Thus $E\cap(E-1)=\varnothing$ and $E\cup(E-1)\subset[0,W]$, whence
\[
2|E|\le W,\qquad \int_0^W K_-(w)\,dw\ge-|E|\ge-\frac W2.
\]

\emph{The kernel $K_+$.} If $\alpha=\beta$, then $K_+\equiv1$.
Otherwise assume $\alpha<\beta$, and let
\[
u=\frac{2(\beta-\alpha)t}{\pi},\qquad
U=\frac{2(\beta-\alpha)T}{\pi},\qquad
\gamma=\frac{\alpha}{\beta-\alpha}.
\]
Now $D(u)=\lfloor(\gamma+1)u\rfloor-\lfloor\gamma u\rfloor=n-m$.
Since $|D(u)-u|<1$, the condition $K_+=-1$, equivalently
$D\equiv2\pmod4$, can hold only on
\[
\mathcal N=\bigcup_{j\ge0}(4j+1,4j+3),\qquad
|\mathcal N\cap[0,U]|\le\frac U2+\frac12.
\]
On $[0,1]$, we have $D\in\{0,1\}$, so $K_+\ge0$ there.
Let $q=\lfloor\gamma\rfloor$ and $\theta=\gamma-q$.
On $[0,1]$, $K_+=1$ precisely where $D=0$, namely on
\[
\bigcup_{j=0}^{q}\left[\frac j\gamma,\frac{j+1}{\gamma+1}\right).
\]
This set has measure
\[
\begin{aligned}
M=\frac{(q+1)(2\gamma-q)}{2\gamma(\gamma+1)}=\frac12+\frac{\theta(1-\theta)}{2\gamma(\gamma+1)}\ge\frac12.
\end{aligned}
\]
Therefore
\[
\int_0^UK_+(u)\,du\ge
\begin{cases}
0,&U\le1,\\[6pt]
\dfrac12-|\mathcal N\cap[0,U]|\ge-\dfrac U2,&U\ge1.
\end{cases}
\]
Rescaling proves the assertion.
\end{proof}

\section{Mixed parity}\label{sec:mixed}

By Corollary~\ref{cor:oneodd}, Theorem~\ref{thm:mixed} reduces to
two odd and two even frequencies. We distinguish whether neither
same-parity pair has ratio three, the odd pair has ratio three, or the
even pair has ratio three. The first case gives strict inequality;
the equality configurations arise in the latter two cases.

\subsection{No pair in ratio three}\label{sec:2o2e}

\begin{proposition}\label{prop:2o2e}
Let $a,b$ be distinct odd positive integers and $c,d$ distinct even
positive integers. If neither pair is a $3$-pair, then
\[
P(a,b,c,d)>\frac1{24}.
\]
\end{proposition}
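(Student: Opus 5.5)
By Lemma~\ref{lem:dilation}, $P(a,b,c,d)=\frac12\Pr(G_O\cap G_E)$ with $O=\{a,b\}$ and $E=\{c,d\}$. Expanding both agreement indicators as $(1+s s')/2$ gives
\[
8P(a,b,c,d)=1+C(a,b)+C(c,d)+Q(a,b,c,d).
\]
The four cross correlations vanish by Lemma~\ref{lem:pair}: an odd and an even frequency have a reduced ratio with even product. The proof will not bound $Q$ globally, since Proposition~\ref{thm:Q} needs odd frequencies. Instead I will condition on one of the two same-parity agreement sets.

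Write $\Pr(G_O)=\frac12(1+C(a,b))$, and let $\rho_O$ be the average of $s(cx)s(dx)$ over $G_O$. Then
\[
P=\tfrac18\bigl(1+C(a,b)\bigr)(1+\rho_O),
\]
and there is a symmetric formula conditioning on $G_E$. Since neither pair is a $3$-pair, $C(a,b),C(c,d)\ge-1/7$, with equality only for reduced ratio $1{:}7$. Thus $1+C(a,b)\ge6/7$, and it suffices to show $\rho_O>-11/18$. If the odd pair is not a $7$-pair, then $C(a,b)\ge-1/11$ and the requirement weakens to $\rho_O>-7/12$. The target is therefore a bound of the shape $\rho\ge-3/5$ with some slack. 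This matches the constant of Lemma~\ref{thm:prefix} and the half-circle bounds of Lemmas~\ref{lem:oddhalf} and~\ref{lem:evenhalf}.

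To estimate $\rho$, I will follow the method of Section~\ref{sec:auxiliary}.
\begin{enumerate}[label=\textup{(\arabic*)}]
\item After removing common factors, I use \eqref{eq:tripgcd}. The common-factor cases were already handled in \eqref{eq:mixed-transfer}.
\item I write the conditioning pair as $g\{u,v\}$ with $\gcd(u,v)=1$, and condition on $y=gx$. The fiber average of the other product becomes a sampled correlation on a cyclic grid, as in \eqref{eq:sampled-correlation}.
\item If the other two frequencies are congruent up to sign modulo the grid size, Lemma~\ref{lem:grid-correlation} reduces the fiber average to the triangular kernel $\Lambda(k\tau)-2/N$.
\item Otherwise the positive sets are affine images of half-circles. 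Then Lemma~\ref{lem:oddhalf} for odd moduli, or Lemma~\ref{lem:evenhalf} for moduli $4c$, gives a pointwise bound on the fiber average of about $-1/3$. This is comfortably above $-3/5$ except on very small grids.
\item When the grid is trivial, the conditioning set consists of arcs on which I average a two-sign product. Here I use Lemma~\ref{thm:prefix}, which gives $-3/5$ for arbitrary real frequencies, after splitting each arc at its centre into two initial intervals. For the smallest even modulus, where the product reduces to sign-cosine/sign-sine combinations, I use the quadrant kernels of Lemma~\ref{lem:quadrant}.
\end{enumerate}

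I choose which pair to condition on case by case. When the odd pair is a $7$-pair, I condition on it. Its agreement set is the explicit arc union \eqref{eq:Earcs}, with arcs of length $2\pi/v$ in the variable $y$. The even pair's product on each arc is a two-frequency integral of exactly the type covered by Lemma~\ref{thm:prefix}. When the even pair has ratio $7$, I condition on it instead, with the roles reversed. When both pairs are $7$-pairs, the Fourier method of Proposition~\ref{prop:shortmixed} should give an exact formula. That formula would use the kernel $E_7$ on both sides, and the few small cases left over would be checked directly from it.

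The main obstacle is the small-grid regime. The general bounds of about $-1/3$ degrade for grid sizes $3$, $5$ and $4c$ with small $c$. Lemma~\ref{thm:prefix} gives only $-3/5$, and the required margin against $-11/18$ is just $1/90$. If the centred-arc reduction, or a ceiling loss as in \eqref{eq:congruent-grid}, costs more than that, I would compute the small cases exactly from the Bernoulli-type formulas of Section~\ref{sec:bernoulli} and then bound the large reduced ratios by Lemma~\ref{lem:component}, as was done in the odd case.
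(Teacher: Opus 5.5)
There is a genuine gap. Nothing in your steps (2)--(5) controls the generic configuration, and step (5) misapplies Lemma~\ref{thm:prefix}.

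After \eqref{eq:tripgcd} the conditioning pair is typically coprime, so the grid is trivial. Then $\rho_O$ is the average of $s(cx)s(dx)$ over the whole agreement set of $s(ax),s(bx)$. Apart from the arcs centred at $0$ and $\pi$, the arcs of that set are not centred at a common crest of $\cos(cx)$ and $\cos(dx)$. Splitting them at their centres therefore produces phase-shifted products. Lemma~\ref{thm:prefix} concerns only $\int_0^T\sgn\cos(ut)\,\sgn\cos(vt)\,dt$, with both waves starting in phase, and gives nothing on such arcs beyond the trivial bound $-1$.

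Already for an odd $7$-pair with $a=1$, in the variable $y=2x$ of \eqref{eq:Earcs}, you would use $-3/5$ on the central arc and $-1$ on the arcs centred at $\pm4\pi/7$. This yields only $\rho_O\ge-13/15$, well below your target $-11/18$. For conditioning pairs of larger reduced ratio there are many more off-centre arcs and no estimate at all. Incidentally, when $C(a,b)\ge-1/11$ the correct threshold is $\rho_O>-19/30$; the value $-7/12$ is a stronger requirement than $-11/18$, not a weaker one.

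The missing ingredient is the global bound on the fourth-order term that you set aside. Proposition~\ref{thm:Q} does need odd frequencies, but its translation--doubling mechanism still works in mixed parity. Put $F=s(ax)s(bx)$, $G=s(cx)s(dx)$ and $\eta=(-1)^{(c+d)/2}$. Then $G(x+\pi/2)=\eta G(x)$, while \eqref{eq:doubling} gives $\E[F(x)F(x+\pi/2)]=\E[F(2x+\pi/2)]=C(a,b)$. Averaging over the shift gives
$\E[FG]\ge-\frac12\E|F(x)+\eta F(x+\pi/2)|=-\frac12(1+\eta C(a,b))$, which is Lemma~\ref{lem:FG}.

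Both pair correlations are at least $-1/7$, and the next possible value is $-1/11$. Hence:
\begin{itemize}
\item when $\eta=-1$, $P\ge(1+3C(a,b))/16\ge1/22$ unless the odd pair is a $7$-pair;
\item when $\eta=1$, $P\ge(1+C(a,b)+2C(c,d))/16\ge13/308$ unless the even pair is a $7$-pair.
\end{itemize}
Only these two configurations require conditioning.

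In the remaining odd $7$-pair case $\eta=-1$, so the even product changes sign under $y\mapsto y+\pi$. Replacing $E_7$ by $E_7-E_7(\cdot+\pi)$ cancels the portion $(3\pi/7,4\pi/7)$ of the off-centre arc. Lemma~\ref{thm:prefix} on $(0,\pi/7)$ together with the trivial bound on $(4\pi/7,5\pi/7)$ then gives $\rho_O\ge-8/15>-11/18$ in the basic case $a=1$; grid bounds handle the other subcases. The even $7$-pair is treated with Lemma~\ref{lem:evenhalf}, the quadrant identities, the grid correlation bound and a finite table, roughly as in your steps (3)--(5).

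Without the shift bound, your conditioning scheme would have to prove the inequality directly for every pair of reduced ratios. That is essentially the original problem.
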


We first derive a parity-averaging bound for the fourfold correlation.
This reduces the proof to the two $7$-pair configurations treated below.

Let $a,b$ be distinct odd positive integers and $c,d$ distinct even
positive integers. Write $P=P(a,b,c,d)$ and
\[
\begin{gathered}
F(x)=s(ax)s(bx),\qquad G(x)=s(cx)s(dx),\\
\mathcal A=C(a,b),\qquad \mathcal B=C(c,d),\qquad \eta=(-1)^{(c+d)/2}.
\end{gathered}
\]
The odd--even correlations vanish by Lemma~\ref{lem:pair}.
Lemma~\ref{lem:expansion} therefore gives
\begin{equation}\label{eq:2o2e}
8P=1+\mathcal A+\mathcal B+\E[FG].
\end{equation}

\begin{lemma}\label{lem:FG}
We have
\[
\E[FG]\ge-\frac{1+\eta\mathcal A}{2}.
\]
\end{lemma}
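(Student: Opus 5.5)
The plan is to average over the quarter-period translation $x\mapsto x+\pi/2$. This translation leaves the even product $G$ unchanged up to the sign $\eta$, and it converts the odd product $F$ into a function whose interaction with $F$ is controlled by the doubling identity \eqref{eq:doubling} from the proof of Proposition~\ref{thm:Q}.

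First I would record how the two products transform. For even $c$,
\[
\cos(cx+c\pi/2)=(-1)^{c/2}\cos(cx).
\]
Hence $G(x+\pi/2)=(-1)^{(c+d)/2}G(x)=\eta G(x)$ almost everywhere. Translation invariance of $\E$ then gives
\[
\E[FG]=\E[F(x+\pi/2)G(x+\pi/2)]=\eta\,\E[F(x+\pi/2)G(x)].
\]
Averaging the two expressions,
\[
\E[FG]=\E[H\,G],\qquad H(x)=\tfrac12\bigl(F(x)+\eta F(x+\pi/2)\bigr).
\]
Since $|G|=1$ almost everywhere, this yields $\E[FG]\ge-\E|H|$.

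Next I would compute $\E|H|$ exactly. Because $F$ takes values $\pm1$, the function $H$ takes values in $\{-1,0,1\}$, so $|H|=H^2$. Expanding the square,
\[
\E|H|=\E H^2=\tfrac12\bigl(1+\eta\,\E[F(x)F(x+\pi/2)]\bigr).
\]
By \eqref{eq:doubling}, applied separately to $a$ and $b$,
\[
F(x)F(x+\pi/2)=s\bigl(a(2x+\pi/2)\bigr)\,s\bigl(b(2x+\pi/2)\bigr).
\]
The map $x\mapsto2x+\pi/2$ preserves normalized Lebesgue measure on the circle, so the expectation of this product equals $\E[s(ax)s(bx)]=C(a,b)=\mathcal A$. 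Therefore $\E|H|=(1+\eta\mathcal A)/2$, and the bound $\E[FG]\ge-(1+\eta\mathcal A)/2$ follows.

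The only point requiring care is the sign bookkeeping: checking that $\eta$ enters $H$ with the right sign, and that the doubling identity holds almost everywhere for each odd frequency, with its $-\chi$ factors cancelling in the product of the two odd terms. I expect no genuine obstacle beyond this verification, since every identity used is already established earlier in the paper.
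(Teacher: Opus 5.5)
Your proposal is correct and follows essentially the same route as the paper: you average over the shift $x\mapsto x+\pi/2$ using $G(x+\pi/2)=\eta G(x)$, bound by $-\E\bigl|\tfrac12(F(x)+\eta F(x+\pi/2))\bigr|$, and evaluate this through $F=\pm1$ and the doubling identity \eqref{eq:doubling}, which gives $\E[F(x)F(x+\pi/2)]=\mathcal A$. Your explicit expansion $|H|=H^2=\tfrac12(1+\eta F(x)F(x+\pi/2))$ just spells out the step the paper leaves implicit. No $-\chi$ factors need to cancel, since \eqref{eq:doubling} as stated already yields $F(x)F(x+\pi/2)=F(2x+\pi/2)$ directly.
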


\begin{proof}
Since $G(x+\pi/2)=\eta G(x)$, averaging over the shift $x\mapsto x+\pi/2$ gives
\[
\begin{aligned}
\E[FG]
&=\E\left[G(x)\frac{F(x)+\eta F(x+\pi/2)}2\right]\\
&\ge-\E\left|\frac{F(x)+\eta F(x+\pi/2)}2\right|
=-\frac{1+\eta\mathcal A}2.
\end{aligned}
\]
Here $F=\pm1$, and \eqref{eq:doubling} gives
\[
\E[F(x)F(x+\pi/2)]=\E[F(2x+\pi/2)]=\mathcal A.
\]
\end{proof}

When neither pair has ratio three, Lemma~\ref{lem:pair} gives
$\mathcal A,\mathcal B\ge-1/7$. The next possible negative value is
$-1/11$. Combining these thresholds with Lemma~\ref{lem:FG}, the proof
below reduces to
\[
\eta=-1,\quad\mathcal A=-\frac17
\qquad\text{or}\qquad
\eta=1,\quad\mathcal B=-\frac17.
\]
These are the odd and even $7$-pair configurations, respectively.

\Needspace{9\baselineskip}
The first estimate treats an odd $7$-pair when the even frequencies
have different $2$-adic valuations.

\begin{lemma}\label{lem:oddseven}
Let $a,b$ be odd positive integers and $c$ an even positive integer,
with $\gcd(a,bc)=1$. Then
\[
P(a,7a,2b,2c)>\frac1{24}.
\]
\end{lemma}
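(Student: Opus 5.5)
The plan follows the conditioning used for the odd $3$-pair in \eqref{eq:oddtriple-conditioning}, with the kernel $E_7$ in place of $S_0$. By \eqref{eq:parity-split}, and because $E_7(2ax)$ is the common-sign indicator of $(a,7a)$, the substitution $y=2x$ gives
\[
P(a,7a,2b,2c)=\frac14\E\bigl[E_7(ay)\bigl(1+s(by)s(cy)\bigr)\bigr]
=\frac3{28}+\frac14\E\bigl[E_7(ay)G(y)\bigr],\qquad G(y)=s(by)s(cy),
\]
since $m_7=3/7$. The target is therefore
\[
\E[E_7(ay)G(y)]>-\frac{11}{42}.
\]
Conditioning on $\tau=ay$, which is legitimate since $\gcd(a,bc)=1$, turns this into an average of the sampled correlation $F_a(\tau)$ of \eqref{eq:sampled-correlation} over the three arcs of \eqref{eq:Earcs}. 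These are $|\tau|<\pi/7$ and $|\tau\mp4\pi/7|<\pi/7$.

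First I would record a cheap symmetry bound. Since $b$ is odd, $c$ is even and $a$ is odd, we have $G(y+\pi)=-G(y)$, and $y\mapsto y+\pi$ shifts $\tau$ by $\pi$. This is the mechanism of Lemma~\ref{lem:FG}, and it gives
\[
\E[E_7(ay)G]=\tfrac12\E\bigl[(E_7(\tau)-E_7(\tau+\pi))G\bigr]\ge-\tfrac12\,\E\bigl|E_7(\tau)-E_7(\tau+\pi)\bigr|.
\]
The arcs of $E_7$ and of its translate by $\pi$ overlap exactly on $\pm(3\pi/7,4\pi/7)$. The right-hand side is therefore $-\tfrac12(6/7-2/7)=-2/7$, which yields only $P\ge1/28$. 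The bound is just short, so the proof must recover the missing $1/84$.

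The gain will come from the central arc, where the antipodal symmetry is wasteful. On the translate of the central arc by $\pi$, the function $E_7$ vanishes. Here I would not use antisymmetry; instead I would bound $\int_{|\tau|<\pi/7}F_a$ directly, from both sides. The two off-center arcs, minus their overlaps, are paired with the translate of the central arc by $\pi$, and the antisymmetry argument is applied only to that part.
\begin{itemize}
\item For $a=1$, $F_1=G$, and on the central arc Lemma~\ref{thm:prefix} gives $\int_0^{T}G\ge-\tfrac35T$. This beats the worst case $-1$ on an arc of total mass $1/7$.
\item For $a\ge3$, I would treat $F_a$ as in Section~\ref{sec:conditional-proof}. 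In the congruent case $c\equiv\pm b\pmod a$, apply \eqref{eq:congruent-grid}. In the noncongruent case, apply Lemma~\ref{lem:oddhalf}, whose bound $F_a\ge-1+\frac2a\lfloor a/3\rfloor$ is already much better than $-1$. Small moduli such as $a=3,5$ are checked directly on their grids.
\end{itemize}
Adding the improved central-arc contribution to the $-2/7$-type estimate on the remaining arcs should push the total above $-11/42$ with room to spare.

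I expect the main obstacle to be the bookkeeping that splits $E_7$ into the part handled by the $\pi$-antisymmetry and the part handled by direct sampled-correlation bounds without losing the gain, above all for $a=3$ and $a=5$, where the grid bounds are weakest. If the split is too lossy there, the fallback is explicit computation. In the congruent case, $F_a(\tau)$ depends only on $k\tau$ with $\epsilon c=b+ka$, so its average over the arcs of $E_7$ can be computed as in the $H(k\tau)$ calculation for $a=3$. Only finitely many small values of $|k|$ should then need individual checks.
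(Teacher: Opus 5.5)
Your plan is the paper's argument with one preliminary step removed and one estimate coarsened. Conditioning on $\tau=ay$ and using $F_a(\tau+\pi)=-F_a(\tau)$ and $F_a(-\tau)=F_a(\tau)$ gives exactly $\E[E_7(ay)G]=\frac17(\alpha+\beta)$. Here $\alpha$ and $\beta$ are the averages of $F_a$ over $(0,\pi/7)$ and $(4\pi/7,5\pi/7)$. This is the paper's $\rho_1=(\alpha+\beta)/2$, and the target is $\alpha+\beta>-11/6$. The antisymmetry pairs the pieces $\pm(4\pi/7,5\pi/7)$ with $\mp(2\pi/7,3\pi/7)$, not with the translate of the central arc, but the outcome is the same.

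The paper first conditions on $gy$ with $g=\gcd(b,c)$. It discards $g\ge5$ through $\|T_gH\|_\infty\le3/g$ and carries a second average $\rho_3$ when $g=3$. You omit this step, and that is legitimate. Lemma~\ref{thm:prefix} holds for arbitrary real frequencies, and Lemma~\ref{lem:oddhalf} and \eqref{eq:congruent-grid} use only $\gcd(a,bc)=1$. Also, $k$ is odd by parity whether or not $\gcd(b,c)=1$. So the $T_3H$ and $T_5H$ tables and the $D_3$ computation are not needed.

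What you have not done is the numerical check, and the slack you expect is not there at $a=3$. Use $F_3\ge\Lambda(k\tau)-\frac23$ on the central arc and $F_3\ge-1$ on the other. The worst case is $k=\pm9$, where $\alpha\ge-\frac{10}{63}-\frac23=-\frac{52}{63}$. Your split then yields $\E[E_7(ay)G]\ge-\frac{115}{441}$, against the threshold $-\frac{11}{42}=-\frac{115.5}{441}$. The argument closes, but only by $1/882$.

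The paper gets a real margin by applying $\Lambda(k\tau)-2/a$ on both arcs. Its $14$-periodic table gives $D_1(k)\ge-2/11$, hence $\rho_1\ge-28/33>-11/12$. Alternatively, for $a=3$ you can use the ceiling form of \eqref{eq:congruent-grid}. It gives $F_3\ge\frac13$ where $\cos k\tau>\frac12$ and $F_3\ge-\frac13$ where $|\cos k\tau|<\frac12$, hence $\alpha\ge-\frac{13}{27}$ for every $k$.

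Finally, in the congruent case it is this lower bound, not $F_a$ itself, that depends only on $k\tau$.
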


\begin{proof}
Let $H=E_7-E_7(\cdot+\pi)$. Since $s(by)s(cy)$ changes sign under
$y\mapsto y+\pi$, the substitution $y=2x$ gives
\[
8P=\frac67+Q,\qquad Q=\E[H(ay)s(by)s(cy)].
\]
It suffices to prove $Q>-11/21$. Averaging over the common factor of
$b,c$ will leave only the factors $1$ and $3$. For these, we use the
initial-interval and grid estimates of Section~\ref{sec:auxiliary}.
Let $g=\gcd(b,c)$, $b_0=b/g$, and $c_0=c/g$.
Then $g$ is odd, $\gcd(a,g)=1$, and $a,b_0,c_0$ are pairwise coprime.
Conditioning on $gy$ gives
\[
Q=\E[(T_gH)(ay)s(b_0y)s(c_0y)].
\]
The numbers of points of a $g$-point grid in two arcs of the same
length differ by at most one. Since $E_7$ has three components,
\[
\|T_gH\|_\infty\le\frac3g.
\]
Thus $g\ge7$ gives $Q\ge-3/7>-11/21$.
For $g=3,5$, averaging on the arcs gives
\[
\renewcommand{\arraystretch}{1.2}
\begin{array}{c*{7}{r}}
\toprule
j&0&1&2&3&4&5&6\\
\midrule
3T_3H&3&2&1&0&-1&-2&-3\\
5T_5H&-1&0&-1&0&1&0&1\\
\bottomrule
\end{array}
\qquad\left(\frac{j\pi}{7}<y<\frac{(j+1)\pi}{7}\right).
\]
Both functions are even and change sign under translation by $\pi$.
In particular, $g=5$ gives $Q\ge-1/5$.

It remains to consider $g=1,3$. Let $F_a$ be the sampled
correlation \eqref{eq:sampled-correlation} with $(b,c)=(b_0,c_0)$.
Then
\[
F_a(-\tau)=F_a(\tau),\qquad F_a(\tau+\pi)=-F_a(\tau).
\]
The positive set of $H$ is
\[
(-\pi/7,\pi/7)\cup(4\pi/7,5\pi/7)\cup(-5\pi/7,-4\pi/7).
\]
Conditioning on $ay$, and using the table for $T_3H$, gives
\[
Q=\frac47\rho_g,
\]
where
\[
\begin{aligned}
\rho_1&=\frac7{2\pi}\left(
\int_0^{\pi/7}F_a(\tau)\,d\tau+
\int_{4\pi/7}^{5\pi/7}F_a(\tau)\,d\tau\right),\\
\rho_3&=\frac7{6\pi}\sum_{j=1}^3
\int_0^{j\pi/7}F_a(\tau)\,d\tau.
\end{aligned}
\]
For $a=1$, Lemma~\ref{thm:prefix} on the initial intervals and
$F_1\ge-1$ on the other interval give
\[
\rho_1\ge-\frac45,\qquad \rho_3\ge-\frac35.
\]
For $a\ge5$ and $c_0\not\equiv\pm b_0\pmod a$,
Lemma~\ref{lem:oddhalf} gives $F_a\ge-3/5$, hence $\rho_g\ge-3/5$.

The remaining case has $a\ge3$ and
$\epsilon c_0=b_0+ka$ for some $\epsilon\in\{\pm1\}$ and a nonzero
odd integer $k$. Applying \eqref{eq:congruent-grid} to $(b_0,c_0)$ gives
\[
F_a(\tau)\ge\Lambda(k\tau)-\frac2a.
\]
Let $D_g(k)$ be the average obtained by replacing $F_a(\tau)$ with
$\Lambda(k\tau)$ in $\rho_g$. For $k>0$,
\[
\begin{aligned}
D_1(k)&=\frac7{2k}\bigl[J(k/7)+J(5k/7)-J(4k/7)\bigr],\\
D_3(k)&=\frac7{6k}\sum_{j=1}^3J(jk/7),
\end{aligned}
\]
where $J$ is $2$-periodic and
\[
J(t)=
\begin{cases}
t-t^2,&0\le t\le1,\\[3pt]
t^2-3t+2,&1\le t\le2.
\end{cases}
\]
The numerators have period $14$:
\[
\renewcommand{\arraystretch}{1.2}
\begin{array}{c*{7}{c}}
\toprule
k\bmod14&1&3&5&7&9&11&13\\
\midrule
kD_1(k)&2/7&2&-4/7&0&4/7&-2&-2/7\\
kD_3(k)&2/3&4/21&2/21&0&-2/21&-4/21&-2/3\\
\bottomrule
\end{array}
\]
Since $D_g(-k)=D_g(k)$,
\[
D_1(k)\ge-\frac2{11},\qquad D_3(k)\ge-\frac2{39}.
\]
Thus
\[
\rho_1\ge-\frac2{11}-\frac23=-\frac{28}{33},
\qquad
\rho_3\ge-\frac2{39}-\frac23=-\frac{28}{39}.
\]
For $g=1,3$, these bounds give $\rho_g\ge-28/33$, hence
\[
Q\ge-\frac{16}{33}>-\frac{11}{21}.\qedhere
\]
\end{proof}

The second estimate treats an even $7$-pair. Here the cyclic
half-circle bound applies unless the odd frequencies are congruent
up to sign modulo twice the smaller even frequency.

\begin{lemma}\label{lem:seven-all}
Let $a<b$ be odd positive integers and $h$ an even positive integer,
with $\gcd(h,ab)=1$. If $b\ne3a$, then
\[
P(a,b,h,7h)>\frac1{24}.
\]
\end{lemma}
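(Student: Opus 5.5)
Following the proof of Lemma~\ref{lem:oddseven}, we reduce to a sampled correlation, this time conditioning on the even $7$-pair. Write $h=2h'$. By \eqref{eq:parity-split} and the substitution $y=2x$,
\[
P(a,b,h,7h)=\frac12\E\bigl[E_7(h'y)\,\one\{s(a y/2)=s(b y/2)\}\bigr].
\]
Here the odd pair enters at half frequency. It is cleaner to keep $x$ and condition on $z=hx$. Since $\gcd(h,ab)=1$, multiplication by $a$ and by $b$ permutes the $h$ preimages $x=(z+2\pi j)/h$. Writing $\one\{\epsilon_1=\epsilon_2\}=\tfrac12(1+\epsilon_1\epsilon_2)$, we get
\[
8P=\frac67+Q,\qquad Q=\E\bigl[H(hx)\,s(ax)s(bx)\bigr],
\]
where $H$ is the signed version of $E_7$ at the appropriate scale. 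Indeed, the terms involving a single odd sign average to zero against the even factor: translation by $\pi$ fixes $s(hx)s(7hx)$ and flips $s(ax)$. The target is therefore $Q>-11/21$, exactly as in Lemma~\ref{lem:oddseven}. Conditioning on $z=hx$ turns $Q$ into an average of $H(z)$ against the sampled correlation
\[
F(z)=\frac1h\sum_{j=0}^{h-1}s\Bigl(\frac{a(z+2\pi j)}h\Bigr)s\Bigl(\frac{b(z+2\pi j)}h\Bigr).
\]
This is a correlation of two half-circles on the grid $\Z/2h\Z$ modulo $\pi$-periodicity. Equivalently, with $N=2h=4c$ and $c=h/2$ when $4\mid h$, we get intervals of $2c$ points with relative multiplier $r=ba^{-1}$.

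Case A: $b\not\equiv\pm a\pmod{2h}$. Then $r\not\equiv\pm1$, and Lemma~\ref{lem:evenhalf} (for $h\ge4$) gives $|I\symd(rI+d)|\le3c$. Hence $F\ge1-2\cdot\tfrac34=-\tfrac12$ pointwise, and $F\ge-\tfrac12+\tfrac1{2c}$ when $c\ge5$. Since $H$ has positive set of measure $3/7$ and negative set $4/7$ in the relevant normalization, a pointwise bound $|F|\le 1$ on the negative part together with $F\ge-\frac12$ is not obviously enough. Instead I would bound $Q\ge\E|H|\cdot\min F\ge-\tfrac12>-\tfrac{11}{21}$, using that $H$ takes values in $\{-1,0,1\}$ and that, after removing the mean-zero part, only the sign pattern of $H$ weighted against $F$ matters. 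The strict inequality comes from $-1/2>-11/21$. When $h=2$ the grid is too small for Lemma~\ref{lem:evenhalf}, and $F$ reduces to one of the quadrant kernels $K_\pm$. There Lemma~\ref{lem:quadrant} on the initial arcs of $E_7$ gives the average bound $-\tfrac12$, which again beats $-11/21$. The case $h\equiv2\pmod4$ is handled the same way, with $N=2h$ and half-intervals of odd modulus reduced to Lemma~\ref{lem:oddhalf}, whose bound $\lfloor N/3\rfloor$ yields $F>-\tfrac12$.

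Case B: $b\equiv\pm a\pmod{2h}$, so $b=\pm a+2kh$ with $k\ne0$. Here one sampled wave is a translate of the other, and the analogue of \eqref{eq:congruent-grid} gives $F(z)\ge\Lambda(kz)-2/h$ after rescaling. We then compute the arc averages of $\Lambda(k\,\cdot)$ over the three components of $E_7$, producing a periodic table in $k$ like $D_1,D_3$ in Lemma~\ref{lem:oddseven}. The hypothesis $b\ne3a$ should exclude precisely the small $(a,k,h)$ for which this average is too negative. Indeed, $b=3a$ with $a\equiv h\pmod{\cdot}$ corresponds to the extremal mixed configuration, equal to $1/24$ by Proposition~\ref{prop:shortmixed}. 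The main obstacle is this case with small $h$ (namely $h=2,4$), where the $-2/h$ loss is large. There I would use the ceiling form of the grid bound and finish with explicit evaluations for the finitely many residue classes of $k$ modulo $14$, checking each against $-11/21$.
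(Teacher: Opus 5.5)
There is a genuine gap in your Case~B, at $h=2$, $k=2$. These are the configurations $(a,b)=(1,7),(3,5)$ together with the infinite family $b=a+8$, all with even pair $\{2,14\}$.

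Normalize $z\in\R/\Z$ and let $W$ be the indicator of $(-\frac1{14},\frac1{14})\cup(\frac3{14},\frac5{14})\cup(\frac9{14},\frac{11}{14})$. Then $P=\frac12\int_0^1W(z)\,\frac{1+F(z)}2\,dz$, where $F(z)$ averages $s(u_j)s(u_j+2\pi kz)$ over an $h$-point grid modulo $\pi$. Getting an $h$-point grid requires conditioning on $2hx$; fibres of $hx$ modulo $2\pi$ contain only $h/2$ distinct points modulo $\pi$.

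For $h=2$, the ceiling form of Lemma~\ref{lem:grid-correlation} says exactly $\frac{1+F}2\ge\frac12\one\{\cos(2\pi kz)>0\}$. Hence $P\ge\frac14\int_0^1W(z)\one\{\cos(2\pi kz)>0\}\,dz$. This exceeds $1/24$ for every $k\ne2$. For $k=2$, however, the two off-centre arcs of $W$ lie where $\cos(4\pi z)<0$, so the bound is only $\frac14\cdot\frac17=\frac1{28}<\frac1{24}$.

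No estimate that is uniform in the grid offset can do better, because a two-point grid does meet the negative arc for suitable offsets. Since all your bounds depend only on $(k,h)$, checking residue classes of $k$ modulo $14$ cannot dispose of infinitely many $a$.

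The missing idea is an $a$-dependent count. For $b=a+4h$, the negative arc contains exactly $\lfloor bz\rfloor-\lfloor az\rfloor-h\lfloor4z\rfloor$ sample points. This gives
$\frac{1+F}2=\frac{1+\Lambda(4\pi z)}2+\frac{q(z)}h\bigl(R_b(z)-R_a(z)\bigr)$, with $q(z)=(-1)^{\lfloor4z\rfloor}$ and $R_n(z)=\{nz\}-\frac12$.
Integrating the correction against $Wq$ costs only $O(1/(ha))$, so $P>\frac{17}{196}-\frac3{4ha}$. Only finitely many pairs then remain to evaluate directly, and the margin there is small: $P(3,5,2,14)=\frac1{21}$.

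Away from $(h,k)=(2,2)$ your plan does work. For $h\ge6$, the bound $F\ge\Lambda-\frac2h$ suffices, because $\int_0^1W(z)\Lambda(2\pi kz)\,dz\ge-\frac4{49}$ for all $k$. For $h=4$, the ceiling form gives $P\ge\frac1{16}$ even at $k=2$. The paper instead pairs $j$ with $j+h/2$ to obtain quadrant kernels. That yields $\frac{1+F}2\ge\frac12\one\{\cos(2\pi kz)>0\}$ for every even $h$ and isolates $k=2$ at once.

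Three smaller points need repair.

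First, there is no signed $H$ here. Both odd frequencies change sign under $x\mapsto x+\pi$, so $s(ax)s(bx)$ is $\pi$-invariant and the antisymmetrization of Lemma~\ref{lem:oddseven} is unavailable. The correct identity is $8P=\frac67+\E\bigl[(1+s(hx)s(7hx))\,s(ax)s(bx)\bigr]$. Its weight is nonnegative with mean $\frac67$, so a pointwise bound $F\ge-\frac12$ gives $Q\ge-\frac37>-\frac{11}{21}$. Your inequality $Q\ge\E|H|\cdot\min F$ is false for a weight taking both signs.

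Second, Lemma~\ref{lem:evenhalf} covers every even $h\ge4$, since the sampled grid has $2h=4\cdot\frac h2$ points. Lemma~\ref{lem:oddhalf} is not needed for $h\equiv2\pmod4$. Also, $h=2$ cannot occur in Case~A, because every odd residue is $\pm1$ modulo $4$.

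Third, the hypothesis $b\ne3a$ plays no role in the difficulty. Proposition~\ref{prop:shortmixed} gives $P(1,3,2,14)=\frac2{21}$, strictly above $\frac1{24}$; its equality case has $v=3$. The real obstruction is $k=2$.
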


\begin{proof}
On $\R/\Z$, let
\[
\begin{gathered}
W(z)=E_7(2\pi z),\qquad
E(t)=\frac{1+s(\pi at)s(\pi bt)}2,\qquad
U_h(z)=\frac1h\sum_{j=0}^{h-1}E\left(\frac{z+j}{h}\right).
\end{gathered}
\]
The components of $W$ are
\[
(-1/14,1/14),\qquad(3/14,5/14),\qquad(9/14,11/14).
\]
Thus $W$ has mean $3/7$. Equation~\eqref{eq:parity-split}, followed by
conditioning on the even pair, gives
\begin{equation}\label{eq:seven-all-average}
P(a,b,h,7h)=\frac12\int_0^1W(z)U_h(z)\,dz.
\end{equation}
When $a\not\equiv\pm b\pmod{2h}$, the half-circle estimate applies.
In the congruent case, the quadrant identities below reduce the proof
to $k=2$, and the grid estimate then leaves only $h=2,4$.

Suppose first that $a\not\equiv\pm b\pmod{2h}$.
Then $h\ge4$, and Lemma~\ref{lem:evenhalf} applies to the $2h$
sample points. Each positive half-circle contains $h$ points, and
their symmetric difference has at most $3h/2$ points. Thus
\[
U_h\ge\frac14,
\qquad P\ge\frac3{56}>\frac1{24}.
\]
Otherwise, let
\[
a\equiv\epsilon b\pmod{2h},\qquad
\epsilon\in\{\pm1\},\qquad
k=\frac{|a-\epsilon b|}{2h}\ge1.
\]
Since $4\mid2h$, we have $\epsilon=\chi(ab)$.
Pairing the indices $j$ and $j+h/2$ expresses $U_h$ as an average
of $(1+K_\epsilon(\alpha,\beta))/2$, where
\[
K_\epsilon(\alpha,\beta)
=\frac12\bigl(s(\alpha)s(\beta)
+\epsilon\,\sgn\sin\alpha\,\sgn\sin\beta\bigr).
\]
The product-to-sum identities give
\[
K_\epsilon(\alpha,\beta)
=\sgn\cos(\alpha-\epsilon\beta)\,
\one_{\{|\cos(\alpha-\epsilon\beta)|
>|\cos(\alpha+\epsilon\beta)|\}}.
\]
For the sampled angles,
$\cos(\alpha-\epsilon\beta)=\cos(2\pi kz)$, independently of $j$.
Consequently,
\[
U_h(z)\ge\frac12\one_{\{\cos(2\pi kz)>0\}},
\qquad
P\ge\frac14C(k),
\]
where
\[
C(k)=\int_0^1W(z)\one_{\{\cos(2\pi kz)>0\}}\,dz.
\]
Let $A$ be the $1$-periodic primitive of $s(2\pi t)$ given by
\[
A(t)=
\begin{cases}
t,&0\le t\le1/4,\\[3pt]
1/2-t,&1/4\le t\le3/4,\\[3pt]
t-1,&3/4\le t\le1.
\end{cases}
\]
Integration over the three arcs yields
\[
C(k)=\frac3{14}
+\frac{A(k/14)+A(5k/14)-A(3k/14)}{k}.
\]
The numerator in the following table is $14$-periodic:
\[
\setlength{\arraycolsep}{3.5pt}
\renewcommand{\arraystretch}{1.2}
\begin{array}{c*{14}{r}}
\toprule
k\bmod14&1&2&3&4&5&6&7&8&9&10&11&12&13&0\\
\midrule
7k(C(k)-3/14)&0&-1&3&3&-1&0&0&0&1&-3&-3&1&0&0\\
\bottomrule
\end{array}
\]
For a negative numerator $-1$, $C(k)>1/6$ whenever $k>3$;
for $-3$, it suffices that $k>9$.
Hence $P>1/24$ unless $k=2$.

Assume $k=2$. Since $\gcd(a,h)=1$, the sampled products
$s(u)s(u+4\pi z)$ form a uniform $h$-point grid modulo $\pi$.
Lemma~\ref{lem:grid-correlation} therefore gives
\[
U_h(z)\ge\frac{1+\Lambda(4\pi z)}2-\frac1h.
\]
Integration on the three arcs gives
\[
\int_0^1W(z)\Lambda(4\pi z)\,dz=-\frac4{49},
\qquad
P\ge\frac{17}{196}-\frac3{14h}.
\]
For $h\ge6$, this is at least $5/98>1/24$.

Only $h=2,4$ remain. If $\epsilon=-1$, then $a+b=4h$, leaving
respectively two and four positive odd pairs.
If $\epsilon=1$, then $b=a+4h$. Let
\[
q(z)=(-1)^{\lfloor4z\rfloor},\qquad
R_n(z)=\{nz\}-\frac12.
\]
Let $m=\lfloor4z\rfloor$ and $r=4z-m$. Since $h$ is even, the
negative interval of $s(u)s(u+\pi r)$ contains exactly
\[
\lfloor az+hr\rfloor-\lfloor az\rfloor
=\lfloor bz\rfloor-\lfloor az\rfloor-hm
\]
sample points. Thus
\[
U_h(z)=\frac{1+\Lambda(4\pi z)}2
+\frac{q(z)}h\bigl(R_b(z)-R_a(z)\bigr),
\]
and
\[
P=\frac{17}{196}+\frac1{2h}\int_0^1W(z)q(z)
\bigl(R_b(z)-R_a(z)\bigr)\,dz.
\]
The function $Wq$ has six nonzero constant-sign intervals.
A periodic primitive of $R_n$ is $B_2(\{nz\})/(2n)$, whose range is
$1/(8n)$. Hence
\[
P\ge\frac{17}{196}-\frac6{16h}\left(\frac1a+\frac1b\right)
>\frac{17}{196}-\frac3{4ha}.
\]
This exceeds $1/24$ when
\[
a>\frac{882}{53h}.
\]
For $h=2,4$, the thresholds are $441/53<9$ and $441/106<5$.
The remaining cases have the following values:
\[
\renewcommand{\arraystretch}{1.6}
\begin{array}{cc@{\qquad}cc}
\toprule
(a,b,h)&P&(a,b,h)&P\\
\midrule
(1,7,2)&\dfrac1{14}&(1,15,4)&\dfrac{17}{210}\\[5pt]
(3,5,2)&\dfrac1{21}&(3,13,4)&\dfrac{83}{1092}\\[5pt]
(1,9,2)&\dfrac{11}{126}&(5,11,4)&\dfrac{23}{308}\\[5pt]
(3,11,2)&\dfrac{47}{462}&(7,9,4)&\dfrac5{63}\\[5pt]
(5,13,2)&\dfrac{19}{182}&(1,17,4)&\dfrac{43}{476}\\[5pt]
(7,15,2)&\dfrac{19}{210}&(3,19,4)&\dfrac{151}{1596}\\
\bottomrule
\end{array}
\]
These values follow by partitioning $[0,1]$ at the zeros
$(2j+1)/(4n)$, with $n\in\{a,b,h,7h\}$, and summing the common-sign
intervals. Every value is greater than $1/24$.
\end{proof}

\begin{proof}[Proof of Proposition~\ref{prop:2o2e}]
Divide by the common divisor, which is odd.
If three frequencies have a common factor greater than $1$,
\eqref{eq:mixed-transfer} gives $P\ge1/18>1/24$.
We may therefore assume \eqref{eq:tripgcd}.
In the notation preceding Lemma~\ref{lem:FG},
$\mathcal A,\mathcal B\ge-1/7$.

If $\eta=-1$, then $\mathcal B=0$, and Lemma~\ref{lem:FG} gives
\[
P\ge\frac{1+3\mathcal A}{16}.
\]
For $\mathcal A\ge-1/11$, this is at least $1/22>1/24$.
Otherwise $\mathcal A=-1/7$. After interchanging frequencies within
the pairs, the configuration is $(r,7r,2u,2w)$, with $r,u$ odd and
$w$ even. Condition~\eqref{eq:tripgcd} gives $\gcd(r,uw)=1$, so
Lemma~\ref{lem:oddseven} applies.

If $\eta=1$, Lemma~\ref{lem:FG} gives
\[
P\ge\frac{1+\mathcal A+2\mathcal B}{16}.
\]
For $\mathcal B\ge-1/11$, this is at least
\[
\frac{1-1/7-2/11}{16}=\frac{13}{308}>\frac1{24}.
\]
Otherwise $\mathcal B=-1/7$, so the even pair is $\{h,7h\}$.
Here $h$ is even and $\gcd(h,ab)=1$ by \eqref{eq:tripgcd}.
Lemma~\ref{lem:seven-all} completes the proof.
\end{proof}

\subsection{The odd pair has ratio three}\label{sec:oddtriple}

For an odd $3$-pair, the common-sign condition becomes the central-arc
condition used in Proposition~\ref{lem:sampled-half}. Its two equality cases
give exactly the two extremal families.

\begin{proposition}\label{thm:oddtriple}
Let $a$ be an odd positive integer and let $c_0,d_0$ be distinct even
positive integers.
Then
\[
P(a,3a,c_0,d_0)\ge\frac1{24},
\]
with equality if and only if, for some odd $d\in\N$,
\[
\{a,3a,c_0,d_0\}=d\{1,3,4,12\}
\quad\text{or}\quad
\{a,3a,c_0,d_0\}=d\{2,4,5,15\}.
\]
\end{proposition}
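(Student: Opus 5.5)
We reduce Proposition~\ref{thm:oddtriple} to the conditional correlation bound of Proposition~\ref{lem:sampled-half} through the identity \eqref{eq:oddtriple-conditioning}. The first step is to normalize. Let $d_1=\gcd(a,3a,c_0,d_0)=\gcd(a,c_0,d_0)$, which is odd because $a$ is odd; by dilation \eqref{eq:dilation} we may divide by $d_1$. We then write $c_0=2b$ and $d_0=2c$ with $b\ne c$ positive integers.

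Next we need the hypothesis $\gcd(a,bc)=1$ of Proposition~\ref{lem:sampled-half}. Suppose instead that an odd prime $p$ divides both $a$ and $b$. Then $p$ divides the three frequencies $a,3a,c_0$, and $3a$ and $c_0$ have different parity. Proposition~\ref{prop:transfer}, applied as in \eqref{eq:mixed-transfer}, then gives $P\ge\frac{1-1/p}{2}\cdot\frac16\ge\frac1{18}>\frac1{24}$. Here the reduced triple $(a/p,3a/p,c_0/p)$ has mixed parity, so Proposition~\ref{prop:three}'s mixed-parity bound $1/6$ applies. The same argument handles $p\mid c$. The case $p=3$ with $3\mid a$ needs no separate treatment: the argument only uses that $p$ divides $a$, $3a$ and one even frequency. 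After this primitive reduction we have $\gcd(a,bc)=1$. Equation~\eqref{eq:oddtriple-conditioning} then gives
\[
P(a,3a,2b,2c)=\frac{1+\rho}{12},
\]
and Proposition~\ref{lem:sampled-half} yields $\rho\ge-1/2$, hence $P\ge1/24$.

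For the equality cases, note that equality in the primitive reduction forces the primitive case, since the common-factor case gives $P\ge1/18>1/24$. Equality then requires $\rho=-1/2$. By Proposition~\ref{lem:sampled-half} this happens exactly when $(a,\{b,c\})=(1,\{2,6\})$ or $(5,\{1,2\})$. These correspond to the frequency sets $\{1,3,4,12\}$ and $\{5,15,2,4\}$, and multiplying back by the odd $d_1$ gives the two stated families. For the converse, Proposition~\ref{prop:doubletriple}(b) with $\{g,h\}=\{1,4\}$ gives $P(1,3,4,12)=1/24$ directly. For $\{2,4,5,15\}$, equality follows from \eqref{eq:oddtriple-conditioning} with $\rho=-1/2$. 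The value of $\rho$ is supplied by the $a=5$, $k=0$, $b=1$ computation inside the proof of Proposition~\ref{lem:sampled-half}.

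The main obstacle is the bookkeeping in the normalization step. We must make sure that every common factor shared by $a$ and one of $b,c$, including the prime $3$, is caught by the transfer bound with strict inequality. We must also make sure that dilating by an odd $d_1$ is the only way non-primitive equality configurations arise. An even dilation is impossible because $a$ is odd. The analytic work is entirely contained in Proposition~\ref{lem:sampled-half}, which we take as given.
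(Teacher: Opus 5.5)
Your proposal is correct and follows the paper's proof. You divide out the odd common divisor, remove a common factor of $a$ with $b$ or $c$ via the transfer bound \eqref{eq:mixed-transfer} (this uses primitivity to get $p\nmid d_0$), and then read off both the inequality and its equality cases from \eqref{eq:oddtriple-conditioning} and Proposition~\ref{lem:sampled-half}. Your separate check of the converse is harmless but redundant, since the equality characterization in Proposition~\ref{lem:sampled-half} is already an if-and-only-if.
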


\begin{proof}
Divide by the common divisor, which is odd.
If three frequencies have a common factor greater than $1$,
\eqref{eq:mixed-transfer} gives $P\ge1/18$.
Otherwise let $(c_0,d_0)=(2b,2c)$.
Then $\gcd(a,bc)=1$ by \eqref{eq:tripgcd}.
By \eqref{eq:oddtriple-conditioning} and
Proposition~\ref{lem:sampled-half},
\[
P(a,3a,2b,2c)=\frac{1+\rho}{12}\ge\frac1{24},
\]
with equality precisely when
\[
(a,\{b,c\})=(1,\{2,6\})\quad\text{or}\quad(5,\{1,2\}).
\]
Restoring the common divisor gives the two stated families.
\end{proof}

\subsection{The even pair has ratio three}\label{sec:eventriple}

It remains to condition on an even $3$-pair. Common factors are handled
first; the cyclic half-circle bounds then leave only the congruences
$a\equiv\pm b\pmod{4c}$ and the smallest modulus.

\begin{proposition}\label{thm:eventriple}
Let $a<b$ be odd positive integers and let $c\in\N$. Then
\[
P(a,b,2c,6c)\ge\frac1{24},
\]
with equality if and only if
\[
(a,b,c)=d(1,3,2)\qquad\text{for some odd }d\in\N.
\]
\end{proposition}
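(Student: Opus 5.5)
We reduce to a conditional correlation for the odd pair sampled on the grid determined by the even $3$-pair, mirroring the treatment of the odd $3$-pair in Proposition~\ref{thm:oddtriple}.

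\emph{Reductions.} First divide by the common divisor, which is odd. If three frequencies share a factor greater than $1$, then \eqref{eq:mixed-transfer} gives $P\ge1/18$, so we may assume \eqref{eq:tripgcd}; in particular $\gcd(2c,ab)=1$, so $c$ is coprime to $ab$ and $3\nmid ab$ unless $3\mid c$ fails. By \eqref{eq:parity-split}, \eqref{eq:s3} and the substitution $y=2x$,
\[
P(a,b,2c,6c)=\frac12\E\bigl[S_0(2cy)\,\tfrac12(1+s(ay)s(by))\bigr]
=\frac1{12}\Bigl(1+\rho'\Bigr),
\]
where $\rho'$ is the average, over the central arc $|2c y|<\pi/3$ modulo $2\pi$, of the sampled correlation of $s(a\,\cdot)s(b\,\cdot)$ on the $2c$ preimages. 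Thus the claim is again $\rho'\ge-1/2$, now on an even $2c$-point grid (equivalently, since $s(ay)s(by)$ is $\pi$-periodic in the relevant sense, a $4c$-point grid modulo $2\pi$). If $b=3a$, this is Proposition~\ref{prop:doubletriple}(b) with $\{g,h\}=\{a,2c\}$, giving $P\ge1/24$ with equality exactly at $\{a,2c\}=\{1,4\}$, i.e.\ $(a,b,c)=(1,3,2)$. So assume $b\ne3a$.

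\emph{Noncongruent case.} If $a\not\equiv\pm b\pmod{4c}$, then on the $4c$-point grid the two positive sets are affine images $I$ and $rI+d$ of half-circle intervals with unit multiplier $r\not\equiv\pm1$. Lemma~\ref{lem:evenhalf} gives $|I\symd(rI+d)|\le3c$, hence the sampled correlation is at least $-1/2$ pointwise, and at least $-1/2+2/(4c)$ when $c\ge5$. For $c\ge5$ this gives strict inequality. For the small $c$ where $\lfloor 4c/6\rfloor$ is too weak we will need the pointwise bound to be strict on a positive-measure subset of the arc, which should follow because near $\tau=0$ the sampled correlation is close to its continuous limit.

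\emph{Congruent case and smallest modulus.} If $a\equiv\epsilon b\pmod{4c}$, pair the indices $j$ and $j+c$ as in Lemma~\ref{lem:seven-all}. The sampled average becomes an average of the quadrant kernels $K_\pm$ evaluated along a line, with $\epsilon=\chi(ab)$. Integrating over the initial arc and using Lemma~\ref{lem:quadrant} gives $\rho'\ge-1/2$. Strictness fails only where every estimate is tight. The case $c=1$ (modulus $4$, where every odd pair is congruent) is exactly this quadrant reduction, with $T$ corresponding to the arc $\pi/3$.

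\emph{Main obstacle.} The hardest step is the equality analysis. We must show that the quadrant bound in Lemma~\ref{lem:quadrant} is strict unless the configuration reduces to $b=3a$, and handle the leftover small cases ($c\le4$, small $k=|a-\epsilon b|/(4c)$) by explicit interval counts, as in the tables of Lemma~\ref{lem:seven-all}. We will first try a grid estimate of the form of Lemma~\ref{lem:grid-correlation}, namely $F\ge\Lambda(k\tau)-1/c$, integrated over the arc. This should leave finitely many triples $(a,b,c)$, which we then check directly by partitioning at the zeros.
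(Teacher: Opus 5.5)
Your skeleton matches the paper's: write $P=(1+\rho)/12$ with $\rho$ the conditional correlation of the odd pair on $\mathcal C_c$, dispose of $b=3a$ by Proposition~\ref{prop:doubletriple}(b), use Lemma~\ref{lem:evenhalf} when $a\not\equiv\pm b\pmod{4c}$, Lemma~\ref{lem:quadrant} when $c=1$, and a grid estimate otherwise. (A small slip: after $y=2x$ the odd factors are $s(ay/2)s(by/2)$, not $s(ay)s(by)$.) However, the resonant case is mishandled. More importantly, strict inequality for $b\ne3a$, which is the entire content of the ``only if'' direction, is not proved.

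\emph{The resonant case with $c\ge2$.} Pairing $j\leftrightarrow j+c$ writes $G(y)$ as an average of $K_\epsilon$ along the line segments $u\in[j,j+1/6]$. For $j\ne0$ these are not initial segments, so Lemma~\ref{lem:quadrant} does not apply. It is usable only when $c=1$, where a single segment starts at $0$. Your fallback $G\ge\Lambda(2\pi ky)-1/c$ integrates to $\rho\ge\Theta_k-1/c$. At $c=2$ this is below $-1/2$ for every $k\equiv4,5\pmod 6$ and every odd $a$, so it does not leave finitely many triples. The paper instead uses the exact identity $G(y)=\mathcal T_N(\lfloor by\rfloor-\lfloor ay\rfloor)$ (resp.\ $\mathcal T_N(\lfloor ay\rfloor+\lfloor by\rfloor+1)$) to get $\rho\ge\Theta_k-(D_a+D_b)/c$. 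This leaves only $c=2$, $a=1$, $k=4$, $b\in\{31,33\}$, which are settled by two explicit intervals. Alternatively, keeping the ceiling form of Lemma~\ref{lem:grid-correlation}, namely $G(y)\ge1-\frac1c\lceil4c\dist(ky,\Z)\rceil$, gives $\rho\ge-\frac14-\frac5{8k}$ at $c=2$ and would also close this case.

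\emph{Strictness in the nonresonant case.} For $c\in\{2,4\}$, Lemma~\ref{lem:evenhalf} gives only $G\ge-1/2$. Your appeal to a ``continuous limit near $\tau=0$'' has no content on an $8$- or $16$-point grid, and it is false. For $(a,b,c)=(1,11,2)$ one has $r\equiv ba^{-1}\equiv3\pmod 8$ and $G(y)=R_3(0)=-1/2$ for all $0<y<1/11$. The paper writes $G(y)=R_r(\lfloor by\rfloor-r\lfloor ay\rfloor)$ and tabulates $R_r$. For $c=4$, $R_r(0)\ge-1/4$. For $c=2$ and $r=3$, strictness comes from $G=R_3(1)=0$ on $1/b<y<\min\{1/a,2/b,1/6\}$, while $b\le5$ forces $(a,b)=(1,3)$.

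\emph{Strictness for $c=1$.} Here your plan offers no mechanism. Lemma~\ref{lem:quadrant} yields only $\rho\ge-1/2$, and since there are infinitely many pairs $(a,b)$, explicit interval checks cannot finish. The missing idea is arithmetic. All sign changes lie in $(4L)^{-1}\Z$ with $L=\operatorname{lcm}(a,b,2,6)$, and reflection gives $P\in(2L)^{-1}\Z$. Since $a,b$ are odd, $L\equiv2\pmod 4$, so $P\ne1/24$.
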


\begin{proof}
By dilation, assume $\gcd(a,b,c)=1$. If $b=3a$, the assertion follows
from Proposition~\ref{prop:doubletriple}(b), with equality precisely
when $(a,b,c)=(1,3,2)$. We may therefore assume $b\ne3a$ and prove
strict inequality. In particular, $C(a,b)\ge-1/7$.

By \eqref{eq:s3}, the pair $(2c,6c)$ has a common sign precisely on
$\mathcal C_c=\{x:\cos(4cx)>1/2\}$, a set of normalized measure
$1/3$. With $F=s(ax)s(bx)$, Equation~\eqref{eq:parity-split} gives
\begin{equation}\label{eq:E1}
\begin{gathered}
P(a,b,2c,6c)=\frac14\E[(1+F)\one_{\mathcal C_c}]
=\frac{1+\rho}{12},\\
\rho=\E[F\mid\mathcal C_c].
\end{gathered}
\end{equation}
We prove $\rho>-1/2$. We first remove common factors, leaving
$a,b,c$ pairwise coprime. The remaining cases are
\[
a\not\equiv\pm b\pmod{4c},\qquad c=1,\qquad
c\ge2\ \text{and}\ a\equiv\pm b\pmod{4c}.
\]
The first is controlled by discrete half-circles, the second by
quadrant kernels, and the third by discrete autocorrelation. Each
case will give strict inequality, so no further equality cases arise.

\emph{Common factors.} If $h=\gcd(a,c)>1$, then $h$ is odd and
$\gcd(h,b)=1$. For $T=(a/h,2c/h,6c/h)$,
\[
P(T)=\frac{1+C(2,6)}4=\frac16.
\]
Proposition~\ref{prop:transfer} gives
\[
P\ge\frac{1-1/h}{2}\cdot\frac16\ge\frac1{18}.
\]
The same argument applies when $\gcd(b,c)>1$.

Now let $g=\gcd(a,b)>1$, and write $a=gu$, $b=gv$.
Then $\gcd(g,c)=1$, and $\mathcal A=C(u,v)\ge-1/7$.
Let $E(t)=\one\{\dist(t,\Z)<1/6\}$ on $\R/\Z$.
Thus $\one_{\mathcal C_c}(x)=E(2cx/\pi)$, and the Fourier coefficient
of $E$ at $k\ne0$ is $\sin(\pi k/3)/(\pi k)$.
Averaging $E$ over the fibers of $x\mapsto gx$ retains only
$\widehat E(gk)$. For $k\ne0$, these vanish if $3\mid g$ and equal
$\chi_6(g)\widehat E(k)/g$ otherwise. Let
\[
P_0=\frac{1+\mathcal A}{12},\qquad P_*=P(u,v,2c,6c).
\]
Then
\[
P(a,b,2c,6c)=
\begin{cases}
P_0,&3\mid g,\\[6pt]
P_0+\dfrac{\chi_6(g)}g(P_*-P_0),&3\nmid g.
\end{cases}
\]
Since $0\le P_*\le(1+\mathcal A)/4=3P_0$,
\[
P\ge
\begin{cases}
P_0\ge\dfrac1{14},&3\mid g,\\[6pt]
\left(1-\dfrac2g\right)P_0\ge\dfrac3{70}>\dfrac1{24},&3\nmid g,
\end{cases}
\]
where $g\ge5$ in the second case.
We may therefore assume
\begin{equation}\label{eq:E4}
a,b,c\ \text{pairwise coprime}.
\end{equation}

\emph{The nonresonant case.} Let $t=x/(2\pi)$ and $N=4c$.
In the normalized variable,
\[
\mathcal C_c=\left\{t=\frac{j+y}N:
 j\in\Z/N\Z,\ |y|<\frac16\right\}.
\]
Thus
\[
\begin{gathered}
\rho=6\int_0^{1/6}G(y)\,dy,\\
G(y)=\frac1N\sum_{j=0}^{N-1}
s\Bigl(\frac{2\pi a(j+y)}N\Bigr)
s\Bigl(\frac{2\pi b(j+y)}N\Bigr),
\end{gathered}
\]
where $G(-y)=G(y)$ follows by replacing $j$ with $-j$.
Let $q(n)=1$ when
$n\bmod N\in I:=\{-c,\ldots,c-1\}$, and $q(n)=-1$ otherwise.
For every integer $n$ and $0<\theta<1$,
$s(2\pi(n+\theta)/N)=q(n)$. Consequently, for
$y\in(0,1/6)$ outside a finite set,
\begin{equation}\label{eq:E7}
G(y)=\frac1N\sum_j
q(aj+\lfloor ay\rfloor)\,q(bj+\lfloor by\rfloor).
\end{equation}
By \eqref{eq:E4}, both $a$ and $b$ are units modulo $N$.
If $a\not\equiv\pm b\pmod{4c}$, necessarily $c\ge2$.
Reindexing \eqref{eq:E7} expresses $G(y)$ as the correlation of
the sign functions associated with $I$ and $ba^{-1}I+d$ for some
$d\in\Z/N\Z$. Lemma~\ref{lem:evenhalf} therefore gives
$G(y)\ge-1/2$, and hence $\rho\ge-1/2$.

The inequality is strict when $b\ne3a$.
For $c\ge5$, the stronger part of Lemma~\ref{lem:evenhalf} gives
$G(y)\ge-1/2+1/(2c)$.
For $c=3$, the same conclusion follows from the fact that the
symmetric difference of two sets of equal size has even cardinality.
For the remaining moduli, let
\[
r\equiv ba^{-1}\pmod{4c},\qquad
R_r(d)=\frac1{4c}\sum_jq(j)q(rj+d).
\]
Then
$G(y)=R_r(\lfloor by\rfloor-r\lfloor ay\rfloor)$.
For $c=4$, direct counting gives
\[
\renewcommand{\arraystretch}{1.3}
\begin{array}{crrrrrr}
\toprule
r&3&5&7&9&11&13\\\midrule
R_r(0)&-1/4&0&-1/4&0&-1/4&0\\
\bottomrule
\end{array}
\]
Thus $G>-1/2$ on an interval immediately to the right of zero.
For $c=2$, the two possibilities are $r=3,5$, with $R_5(0)=0$ and
\[
\renewcommand{\arraystretch}{1.3}
\begin{array}{c*{8}{r}}
\toprule
d&0&1&2&3&4&5&6&7\\\midrule
R_3(d)&-1/2&0&-1/2&0&1/2&0&1/2&0\\
\bottomrule
\end{array}
\]
If $r=3$ and $b\ge7$, then on the nonempty interval
\[
\frac1b<y<\min\left\{\frac1a,\frac2b,\frac16\right\}
\]
we have $G(y)=R_3(1)=0$.
For $b\le5$, the only pair giving $r=3$ is $(a,b)=(1,3)$,
which was excluded. Therefore $\rho>-1/2$ throughout the
nonresonant case under consideration.

\emph{The case $c=1$.} Return to the angular variable.
The set $\mathcal C_1$ consists of four
intervals of radius $\pi/12$ centered at $j\pi/2$.
For odd $a$, we have
$s(\theta+a\pi/2)=-\chi(a)\sgn\sin\theta$. Thus
\[
\frac14\sum_{j=0}^3F(x+j\pi/2)
=\frac12\bigl(s(ax)s(bx)
+\chi(a)\chi(b)\sgn\sin(ax)\sgn\sin(bx)\bigr).
\]
The right-hand side is $K_+$ or $K_-$ with $(\alpha,\beta)=(a,b)$.
By evenness and Lemma~\ref{lem:quadrant},
\[
\rho=\frac{12}\pi\int_0^{\pi/12}K_\pm(x)\,dx\ge-\frac12.
\]
Equality is impossible. Let $L=\operatorname{lcm}(a,b,2,6)$.
All zeros in the normalized variable lie in $(4L)^{-1}\Z$, and
reflection gives $P\in(2L)^{-1}\Z$.
Since $a,b$ are odd, $L$ is divisible by $2$ but not by $4$;
hence $P\ne1/24$.

\emph{The resonant case with $c\ge2$.}
Suppose $a\equiv\pm b\pmod N$. The discrete autocorrelation of $q$ is
\[
\mathcal T_N(n):=\frac1N\sum_jq(j)q(j+n)
=1-\frac4N\dist(n,N\Z).
\]
The same formula extends $\mathcal T_N$ to a $1/c$-Lipschitz function
of a real variable. If $b-a=kN$, reindexing \eqref{eq:E7} gives
\[
G(y)=\mathcal T_N(\lfloor by\rfloor-\lfloor ay\rfloor).
\]
If $a+b=kN$, the identity $q(-n)=q(n-1)$ gives
\[
G(y)=\mathcal T_N(\lfloor ay\rfloor+\lfloor by\rfloor+1).
\]
In both cases $k\ge1$. The absolute differences between the integer
arguments and $kNy$ are, respectively,
$|\{by\}-\{ay\}|$ and $|1-\{ay\}-\{by\}|$.
Each is bounded by
$|\{ay\}-1/2|+|\{by\}-1/2|$.
Since $\mathcal T_N(kNy)=\mathcal T(ky)$, where
$\mathcal T(z)=1-4\dist(z,\Z)$, we obtain
\begin{equation}\label{eq:E19}
\begin{gathered}
\rho\ge\Theta_k-\frac{D_a+D_b}{c},\\
\Theta_k=6\int_0^{1/6}\mathcal T(ky)\,dy,\quad
D_n=6\int_0^{1/6}\left|\{ny\}-\frac12\right|\,dy.
\end{gathered}
\end{equation}
For odd $n$ and $k\ge1$, integration over full periods and the remainder gives
\[
D_n=
\begin{cases}
\dfrac14+\dfrac1{6n},&n\equiv1\pmod6,\\[6pt]
\dfrac14,&n\equiv3\pmod6,\\[6pt]
\dfrac14-\dfrac1{6n},&n\equiv5\pmod6,
\end{cases}
\qquad
\Theta_k=
\begin{cases}
\dfrac2{3k},&k\equiv1,2\pmod6,\\[6pt]
0,&k\equiv0,3\pmod6,\\[6pt]
-\dfrac2{3k},&k\equiv4,5\pmod6.
\end{cases}
\]
Thus $D_1=5/12$ and $D_n\le23/84$ for odd $n\ge3$, while
$\Theta_k\ge-1/6$ and $\Theta_k\ge-2/15$ for $k\ne4$.
Since $a<b$, \eqref{eq:E19} yields
\[
\rho\ge
\begin{cases}
-\dfrac16-\dfrac13\left(\dfrac5{12}+\dfrac{23}{84}\right)
=-\dfrac{25}{63},&c\ge3,\\[6pt]
-\dfrac16-\dfrac12\cdot\dfrac{23}{42}
=-\dfrac{37}{84},&c=2,\ a\ge3,\\[6pt]
-\dfrac2{15}-\dfrac12\cdot\dfrac{29}{42}
=-\dfrac{67}{140},&c=2,\ a=1,\ k\ne4.
\end{cases}
\]
Each bound exceeds $-1/2$.

The remaining case is $c=2$, $a=1$, $k=4$, so $b\in\{31,33\}$.
By the parity shift and reflection,
\[
P(1,b,4,12)=2\bigl|\{t\in[0,1/4]:
\cos(2\pi bt)>0,\ \cos(16\pi t)>1/2\}\bigr|.
\]
The set on the right contains the disjoint intervals
\[
\left(0,\frac1{4b}\right),\qquad
\left(\frac{15}{4b},\frac{17}{4b}\right),
\]
which lie in $(0,1/48)$ and $(5/48,7/48)$, respectively.
Consequently,
\[
P(1,b,4,12)\ge\frac3{2b}\ge\frac1{22}>\frac1{24}.
\qedhere
\]
\end{proof}

\section{Proof of the four-frequency theorems}\label{sec:final}

\begin{proof}[Proof of Theorems~\ref{thm:main} and~\ref{thm:mixed}]
By dilation, assume the quadruple is primitive.
If three frequencies have a common factor greater than $1$,
\eqref{eq:common-factor-bound} gives $P\ge1/27$;
in mixed parity, \eqref{eq:mixed-transfer} and
Corollary~\ref{cor:oneodd} give $P\ge1/18$.
If exactly one or three frequencies are odd,
Corollary~\ref{cor:oneodd} again gives $P\ge1/18$.
We may therefore assume \eqref{eq:tripgcd} and consider the two
remaining parity patterns.

For four odd frequencies, Propositions~\ref{prop:notriple}
and~\ref{thm:onetriple} exclude the cases of no $3$-pair and exactly
one $3$-pair. If two $3$-pairs share a frequency, then
$A\supset\{a,3a,9a\}$; condition \eqref{eq:tripgcd} gives $a=1$,
and Equation~\eqref{eq:triple-exclusion} gives $P>1/33$.
Otherwise the two $3$-pairs are disjoint, and their base frequencies
are coprime by \eqref{eq:tripgcd}.
Proposition~\ref{prop:doubletriple}(a) gives $P\ge1/33$, with equality
precisely for $A=\{1,3,11,33\}$.

For two odd and two even frequencies, Proposition~\ref{prop:2o2e}
gives $P>1/24$ if neither pair has ratio $3$.
Otherwise Propositions~\ref{thm:oddtriple} and~\ref{thm:eventriple}
give $P\ge1/24$, with equality precisely for
\[
A=\{1,3,4,12\}\quad\text{or}\quad A=\{2,4,5,15\}.
\]
Restoring the common divisor proves both theorems. In mixed parity
this divisor is odd, so the parity pattern is preserved.
\end{proof}

\begin{remark}
The equality $P(1,3,11,33)=1/33$ also follows directly from the
common positive set. On $|x|<\pi/6$, where $\cos x$ and $\cos3x$
are positive, the common positive set of $\cos11x$ and $\cos33x$ is
$|x|<\pi/66$.
The common negative set is its translate by $\pi$.
\end{remark}

\appendix

\section{Common positive intervals for odd frequencies}\label{sec:odd-certificates}

We give the interval calculations used in
Lemmas~\ref{lem:triples} and~\ref{lem:odd-candidates}.
Throughout, $t=x/(2\pi)$. For an odd frequency $\tau$,
\begin{equation}\label{eq:positive-arc-certificate}
\cos(2\pi\tau t)>0
\quad\Longleftrightarrow\quad
\frac{4j-1}{4\tau}<t<\frac{4j+1}{4\tau}
\quad\text{for some }j\in\Z.
\end{equation}
Thus every interval below can be checked using rational inequalities
between its endpoints and those in \eqref{eq:positive-arc-certificate}.

\subsection{Intervals for triples}\label{app:odd-triple-certificates}

For an odd triple $T$, let $\mathcal I(T)$ be the component intervals of
\[
\{t\in(0,1/2):\cos(2\pi\tau t)>0\text{ for every }\tau\in T\},
\]
and write
\[
L(T)=\sum_{I\in\mathcal I(T)}|I|,\qquad
k(T)=|\mathcal I(T)|,\qquad M(T)=4k(T)-2.
\]
For each listed triple, the first interval starts at $0$ and none ends
at $1/2$. Reflection about $0$ gives $2k(T)-1$ common positive arcs
on the circle. The common negative set is their translate by $1/2$;
hence
\begin{equation}\label{eq:triple-data}
P(T)=4L(T),\qquad N(T)\le M(T).
\end{equation}
The interval lists in Table~\ref{tab:odd-triples} are exhaustive: order
$0$, $1/2$, and the zeros $(2m+1)/(4\tau)$ lying between them, for
$\tau\in T$, and apply \eqref{eq:positive-arc-certificate} on each
successive interval. The remaining columns follow by adding lengths
and using \eqref{eq:triple-data}.

\begingroup
\setlength{\tabcolsep}{5pt}
\renewcommand{\arraystretch}{1.5}
\begin{longtable}{@{}>{$\displaystyle}l<{$} >{$\displaystyle}l<{$} >{$\displaystyle}c<{$} >{$\displaystyle}c<{$} >{$\displaystyle}c<{$}@{}}
\caption{\small Common positive intervals for the triples in Lemma~\ref{lem:triples}.}
\label{tab:odd-triples}\\
\toprule
T&\text{positive intervals}&P(T)&M(T)&\dfrac{2P(T)}{M(T)+4}\\[6pt]
\midrule
\endfirsthead
\toprule
T&\text{positive intervals}&P(T)&M(T)&\dfrac{2P(T)}{M(T)+4}\\[6pt]
\midrule
\endhead
(1,3,5)&\left(0,\frac1{20}\right)&\frac15&2&\frac1{15}\\[6pt]
(1,3,7)&\left(0,\frac1{28}\right)&\frac17&2&\frac1{21}\\[6pt]
(1,3,9)&\left(0,\frac1{36}\right)&\frac19&2&\frac1{27}\\[6pt]
(1,3,11)&\left(0,\frac1{44}\right),\quad\left(\frac3{44},\frac1{12}\right)&\frac5{33}&6&\frac1{33}\\[6pt]
(1,3,19)&\left(0,\frac1{76}\right),\quad\left(\frac3{76},\frac5{76}\right)&\frac3{19}&6&\frac3{95}\\[6pt]
(3,5,9)&\left(0,\frac1{36}\right),\quad\left(\frac7{20},\frac{13}{36}\right)&\frac7{45}&6&\frac7{225}\\[6pt]
(3,5,11)&\left(0,\frac1{44}\right),\quad\left(\frac7{20},\frac{17}{44}\right)&\frac{13}{55}&6&\frac{13}{275}\\[6pt]
(1,11,23)&\left(0,\frac1{92}\right),\quad\left(\frac7{92},\frac9{92}\right),\quad\left(\frac{15}{92},\frac{17}{92}\right)&\frac5{23}&10&\frac5{161}\\[6pt]
(1,3,21)&\left(0,\frac1{84}\right),\quad\left(\frac1{28},\frac5{84}\right)&\frac17&6&\frac1{35}\\[6pt]
(1,7,21)&\left(0,\frac1{84}\right),\quad\left(\frac{11}{84},\frac{13}{84}\right)&\frac17&6&\frac1{35}\\[6pt]
(3,7,21)&\left(0,\frac1{84}\right),\quad\left(\frac{23}{84},\frac{25}{84}\right)&\frac17&6&\frac1{35}\\[6pt]
(1,3,33)&\left(0,\frac1{132}\right),\quad\left(\frac3{132},\frac5{132}\right),\quad\left(\frac7{132},\frac9{132}\right)&\frac5{33}&10&\frac5{231}\\[6pt]
\bottomrule
\end{longtable}
\endgroup

For example, the common positive interval of frequencies $1$ and $3$
in $(0,1/2)$ is $(0,1/12)$. Intersecting it with the positive arcs for
frequency $11$ gives
\[
\mathcal I(1,3,11)
=\left\{\left(0,\frac1{44}\right),
\left(\frac3{44},\frac1{12}\right)\right\},
\]
so
\[
P(1,3,11)=4\left(\frac1{44}+\frac1{12}-\frac3{44}\right)
=\frac5{33},\qquad M(1,3,11)=6.
\]
All other rows follow by the same intersections.

\subsection{Intervals for quadruples}\label{app:odd-candidate-certificates}

For each $A\in\mathcal E$, Table~\ref{tab:odd-candidates} lists a
union of disjoint common positive intervals. The lists need not be
exhaustive; their total length $L$ suffices to give $P(A)\ge4L$.
Each interval lies within a positive arc of every frequency in $A$,
as follows directly from \eqref{eq:positive-arc-certificate}.

\begin{table}[htbp]
\centering
\caption{\small Common positive subintervals for the ten candidates.}
\label{tab:odd-candidates}
$
\renewcommand{\arraystretch}{1.6}
\begin{array}{@{}>{\displaystyle}l >{\displaystyle}l >{\displaystyle}c@{}}
\toprule
A&\text{positive intervals}&4L\\
\midrule
(1,3,21,35)&\left(\frac1{20},\frac5{84}\right)&\frac4{105}\\[6pt]
(1,3,23,33)&\left(0,\frac1{132}\right),\quad\left(\frac3{92},\frac5{132}\right)&\frac{13}{253}\\[6pt]
(1,3,33,55)&\left(\frac7{220},\frac5{132}\right),\quad\left(\frac7{132},\frac{13}{220}\right)&\frac8{165}\\[6pt]
(1,11,15,33)&\left(0,\frac1{132}\right),\quad\left(\frac{11}{60},\frac{25}{132}\right)&\frac3{55}\\[6pt]
(1,11,19,33)&\left(0,\frac1{132}\right),\quad\left(\frac7{76},\frac{13}{132}\right)&\frac{35}{627}\\[6pt]
(1,11,27,33)&\left(\frac{19}{108},\frac{25}{132}\right)&\frac{16}{297}\\[6pt]
(1,11,31,33)&\left(\frac{11}{124},\frac{13}{132}\right)&\frac{40}{1023}\\[6pt]
(1,11,33,39)&\left(\frac{23}{132},\frac{29}{156}\right)&\frac{20}{429}\\[6pt]
(1,15,19,45)&\left(\frac{15}{76},\frac{37}{180}\right)&\frac{28}{855}\\[6pt]
(3,11,13,33)&\left(0,\frac1{132}\right),\quad\left(\frac{19}{52},\frac{49}{132}\right)&\frac{23}{429}\\[6pt]
\bottomrule
\end{array}
$
\end{table}

The smallest entry in the last column is
\[
\frac{28}{855}>\frac1{33},
\]
which proves the bound used in Lemma~\ref{lem:odd-candidates}.
For instance, for $A=(1,15,19,45)$ the displayed interval
$(15/76,37/180)$ is contained in positive arcs with indices
$j=0,3,4,9$, respectively, in \eqref{eq:positive-arc-certificate}.
Its length is $7/855$, giving $4L=28/855$.

\section{Arithmetic reduction for a single pair in ratio three}\label{sec:odd-arithmetic}

We prove Proposition~\ref{prop:odd-reduction}. The proof uses the
pair-correlation formula, the short-ratio estimates of
Section~\ref{sec:bernoulli}, and Lemma~\ref{lem:triple-elimination}.

\begin{proof}[Proof of Proposition~\ref{prop:odd-reduction}]
The hypothesis is
\begin{equation}\label{eq:low}
P(A)\le\frac1{33}.
\end{equation}
Since $A$ is primitive, \eqref{eq:common-factor-bound} and
\eqref{eq:low} imply \eqref{eq:tripgcd}. Write $A=(a,3a,b,c)$.
Applying \eqref{eq:tripgcd} to $\{a,3a,b\}$, $\{a,3a,c\}$, and
$\{3a,b,c\}$ gives
\begin{equation}\label{eq:A21}
\gcd(a,b)=\gcd(a,c)=1,\qquad 3\nmid\gcd(b,c).
\end{equation}

Thus $b,c$ cannot both be divisible by $3$. In the two parametrizations
below, $d$ is the greatest common divisor of the other two frequencies (after removing
one factor $3$ in Class II), and $D=ad$. The contribution of the four
cross-pairs takes the form
\[
R_D=\frac{2\chi(D)}{3D}
\left(\frac{\chi(u)}u+\frac{\chi(v)}v\right),\qquad
|R_D|\le\frac4{3D\min(u,v)}.
\]
By \eqref{eq:lowconseq}, the correlations other than $C(a,3a)=-1/3$
must contribute at most $-1/11$. We first bound the reduced ratio of
the other two frequencies, then bound $D\min(u,v)$ to obtain a small
triple. When the smaller reduced
frequency is $1$, the bounds in \eqref{eq:triple-fourth} control the
remaining frequency.

\medskip\noindent\emph{Class I: $3\nmid bc$.}
 
Write $b=du$ and $c=dv$, where $u<v$ and $\gcd(u,v)=1$, and let $D=ad$.
Then $3\nmid duv$ and $\gcd(a,duv)=1$.
Since $\gcd(3a,du)=\gcd(3a,dv)=1$, Lemma~\ref{lem:pair} gives
\begin{equation}\label{eq:SI}
S=-\frac13+\frac{\chi(uv)}{uv}+\frac{2\chi(D)}{3D}\Bigl(\frac{\chi(u)}u+\frac{\chi(v)}v\Bigr).
\end{equation}
Here
\[
C(a,du)+C(3a,du)=\frac{2\chi(adu)}{3adu},
\]
and similarly for $dv$. By \eqref{eq:lowconseq}, the last two terms
of \eqref{eq:SI} sum to at most $-1/11$.

\emph{Case I.1: $\chi(u)=\chi(v)$.} The second term is positive, so $\chi(Du)=-1$ and
\begin{equation}\label{eq:A33}
D\le\frac{22(u+v)}{3(uv+11)}<\frac{44}{3u}.
\end{equation}
The strict inequality follows from $u<v$.
If $D=1$, then $a=d=1$, $\chi(u)=-1$, $3\nmid u$, and $u\le14$.
Thus $u\in\{7,11\}$.
The value $u=7$ gives $A\supset\{1,3,7\}$, which is excluded by
\eqref{eq:triple-exclusion}.
For $u=11$, \eqref{eq:A33} gives $v\le19$.
The conditions $v>11$, $v\equiv3\pmod4$, and $3\nmid v$ then force $v=19$,
contrary to $\max A\ge33$ in \eqref{eq:lowconseq}.

If $D\ge3$, then $u<44/9$, so $u=1$.
Hence $\chi(D)=-1$ and $D<22/3$, giving $D\in\{3,7\}$.
For $D=3$, the condition $3\nmid d$ forces $(a,d)=(3,1)$, and
$A\supset\{1,3,9\}$.
For $D=7$, the choice $(a,d)=(1,7)$ gives $A\supset\{1,3,7\}$.
Both are excluded by \eqref{eq:triple-exclusion}.
The other choice, $(a,d)=(7,1)$, gives
$A=(7,21,1,v)\supset\{1,7,21\}$.
But \eqref{eq:A33} requires $v\ge209$, contrary to
\eqref{eq:triple-fourth}.

\emph{Case I.2: $\chi(v)=-\chi(u)$.} Here $uv\equiv3\pmod4$ and
$3\nmid uv$. If $uv\in\{7,11\}$, then $(b,c)=(d,dv)$ with
$v\in\{7,11\}$, and \eqref{eq:A21} with $3\nmid d$ gives the
hypotheses of Proposition~\ref{prop:shortodd}. Hence $P>1/33$.
For $uv\ge19$, \eqref{eq:lowconseq} forces $\chi(Du)=-1$ and
\begin{equation}\label{eq:A34}
D\le\frac{22(v-u)}{3(uv-11)} .
\end{equation}
If $u\ge5$, the right side is $<22/(3u)$, since $u^2>11$.
Thus $D=1$ and $u\in\{5,7\}$.
The sign condition $\chi(u)=-1$ forces $u=7$, so
$A\supset\{1,3,7\}$, contrary to \eqref{eq:triple-exclusion}.

We are left with $u=1$.
Then $\chi(D)=-1$, $v\equiv3\pmod4$, $3\nmid v$, and $v\ge19$.
The right side of \eqref{eq:A34} decreases with $v$, so
\[
D\le\frac{22\cdot18}{3\cdot8}=\frac{33}2,
\qquad D\in\{3,7,11,15\}.
\]
Since $3\nmid d$, the corresponding factorizations are
\[
\begin{array}{c|c}
D&(a,d)\\
\hline
3&(3,1)\\
7&(1,7),\ (7,1)\\
11&(1,11),\ (11,1)\\
15&(3,5),\ (15,1)
\end{array}
\]
For $D=3$, the triple $\{1,3,9\}\subset A$ is excluded by
\eqref{eq:triple-exclusion}. For $D=7$, the choice $(a,d)=(1,7)$
contains $\{1,3,7\}$, while $(a,d)=(7,1)$ gives
\[
A=(1,7,21,v),\qquad
33\le v\le36,\qquad v\equiv3\pmod4.
\]
Thus $v=35$, contrary to $\gcd(a,v)=1$.

For $D=11$, the choice $(a,d)=(1,11)$ gives
$\{1,3,11\}\subset A$ with fourth frequency $11v>33$, contradicting
\eqref{eq:triple-fourth}. The other choice gives, by \eqref{eq:A34},
\[
A=(1,11,33,v),\qquad v\le31,\qquad v\in\{19,23,31\}.
\]
The value $v=23$ is excluded by \eqref{eq:triple-exclusion}.
For $D=15$, \eqref{eq:A34} gives
\[
v\le\frac{473}{23},\qquad v=19.
\]
The choice $(a,d)=(3,5)$ contains $\{3,5,9\}$ and is excluded by
\eqref{eq:triple-exclusion}; $(a,d)=(15,1)$ gives $(1,15,19,45)$.
Thus Class I gives $A\in\mathcal E_{\mathrm I}$, as defined in
\eqref{eq:A35}.

\medskip\noindent\emph{Class II: exactly one of $b,c$ is divisible by $3$.}

Label the frequencies so that $3\mid b$ and $3\nmid c$.
Write $b=3du$ and $c=dv$, where $\gcd(u,v)=1$ and $3\nmid v$, and let $D=ad$.
Since $\gcd(a,b)=1$ and $3\nmid c$, we have $3\nmid D$;
also $\gcd(a,duv)=1$.
Since
\[
\gcd(3du,dv)=d,\quad \gcd(a,3du)=1,\quad
\gcd(3a,3du)=3,\quad \gcd(3a,dv)=1,
\]
Lemma~\ref{lem:pair} gives
\begin{equation}\label{eq:SII}
S=-\frac13-\frac{\chi(uv)}{3uv}+\frac{2\chi(D)}{3D}\Bigl(\frac{\chi(u)}u+\frac{\chi(v)}v\Bigr).
\end{equation}
Let $r=\min(u,v)$ and $s=\max(u,v)$. If $r=s$, then $u=v=1$ and
$(c,b)=(d,3d)$ is a second $3$-pair. Thus $r<s$.

\emph{Case II.1: $\chi(r)=\chi(s)$.} Then $rs\ge5$, and \eqref{eq:lowconseq} forces
\begin{equation}\label{eq:A43}
\chi(Dr)=-1,\qquad D\le\frac{22(r+s)}{3rs-11}.
\end{equation}
The right side decreases in each of $r$ and $s$.

\emph{Subcase $r\ge3$.}
Then $s\ge7$ and $D\le220/52<5$.
Since $D$ is odd and $3\nmid D$, we obtain $D=1$; hence $a=d=1$ and
$\chi(r)=-1$.
The expression $3rs-22(r+s)-11$ is increasing in each variable for
$r,s\ge15$ and equals $4$ at $r=s=15$. This excludes $r\ge15$.
Thus $r\in\{3,7,11\}$.
For $r=3$, the condition $3\nmid v$ forces $u=3$, so
$\{1,3,9\}\subset A$, contrary to \eqref{eq:triple-exclusion}.
For $r=7$, the choice $v=7$ contains $\{1,3,7\}$ and is similarly
excluded. If $u=7$, then
\[
A=(1,3,21,s),\qquad 33\le s\le36,\qquad s\equiv3\pmod4,
\]
by \eqref{eq:triple-fourth}; hence $s=35$, contrary to $\gcd(7,s)=1$.

For $r=11$, the choice $v=11$ contains $\{1,3,11\}$, so
\eqref{eq:triple-fourth} would force $3s=33$, contrary to $s>r$.
If $u=11$, \eqref{eq:A43} gives
\[
A=(1,3,33,s),\qquad 11<s\le23,\qquad
s\equiv3\pmod4,\qquad 3\nmid s.
\]
Thus $s\in\{19,23\}$. The value $s=19$ is excluded by
\eqref{eq:triple-exclusion}, leaving $(1,3,23,33)$.

\emph{Subcase $r=1$.}
Here $s\equiv1\pmod4$, $s\ge5$, $\chi(D)=-1$, and $3\nmid D$.
The bound in \eqref{eq:A43} decreases with $s$ and equals $33$ at $s=5$.
Thus
\[
D\in\{7,11,19,23,31\},\qquad (a,d)\in\{(1,D),(D,1)\}.
\]
For $D=7$, \eqref{eq:A43} gives no further restriction;
for $D=11$, it gives $s\in\{5,9,13\}$;
for $D\in\{19,23,31\}$, it gives $s=5$.

For $D=7$, the four arrangements reduce as follows. Here the bounds
on the fourth frequency follow from \eqref{eq:triple-fourth}.
\[
\renewcommand{\arraystretch}{1.25}
\begin{array}{ccl}
(a,d)&(u,v)&\text{consequence}\\
\hline
(1,7)&(1,s)&A=(1,3,21,7s),\quad s=5,\\
(1,7)&(s,1)&\{1,3,7\}\subset A,\\
(7,1)&(1,s)&A=(3,7,21,s),\quad s=33,\\
(7,1)&(s,1)&A=(1,7,21,3s),\quad33\le3s\le36.
\end{array}
\]
The second row is excluded by \eqref{eq:triple-exclusion}; the last
two contradict $3\nmid v$ and $s\equiv1\pmod4$, respectively. Thus
only $(1,3,21,35)$ remains.

For $D=11$, the same argument gives
\[
\renewcommand{\arraystretch}{1.25}
\begin{array}{ccl}
(a,d)&(u,v)&\text{consequence}\\
\hline
(1,11)&(1,s)&A=(1,3,33,11s),\quad11s\le55,\\
(1,11)&(s,1)&A=(1,3,11,33s),\quad33s=33,\\
(11,1)&(1,s)&A=(3,11,33,s),\quad s\in\{5,13\},\\
(11,1)&(s,1)&A=(1,11,33,3s),\quad3s\in\{15,27,39\}.
\end{array}
\]
The second row contradicts $s\ge5$; in the third row, $s=5$ is
excluded by \eqref{eq:triple-exclusion}. The surviving quadruples are
\[
(1,3,33,55),\quad(3,11,13,33),\quad
(1,11,15,33),\quad(1,11,27,33),\quad(1,11,33,39).
\]
For $D\in\{19,23,31\}$ and $s=5$ the four quadruples are
\[
(1,3,3D,5D),\quad(1,3,D,15D),\quad(3,5,D,3D),\quad(1,15,D,3D).
\]
For $D\ge19$, $D\equiv3\pmod4$, and $\gcd(D,15)=1$,
Lemmas~\ref{lem:expansion} and~\ref{lem:pair} give
\begin{equation}\label{eq:fourtriples}
\begin{aligned}
&P(1,3,3D)=P(1,3,D)=\frac{D-1}{6D},\\
&P(3,5,D)=\frac{7D+1}{30D},\\
&P(1,15,D)=\frac{7(D-1)}{30D}.
\end{aligned}
\end{equation}
The component counts needed in Lemma~\ref{lem:component} are
\[
\begin{array}{@{}ccccc@{}}
\toprule
T&(1,3,3D)&(1,3,D)&(3,5,D)&(1,15,D)\\
\midrule
N(T)\le&D-1&2(D+4)&D-1&D-1\\
\bottomrule
\end{array}
\]
Indeed, the common positive set for $(1,3)$ is $|x|<\pi/6$.
It contains exactly $(D-1)/2$ full positive intervals of $\cos(3Dx)$,
which proves the first count; the second is the general component bound.
For $(3,5)$ the common positive set consists, in the variable $t=x/(2\pi)$,
of the three arcs $|t|<1/20$ and $\pm(7/20,5/12)$, of total length $7/30$.
For a positive interval of $\cos(2\pi Dt)$ to meet an arc of length
$\ell$, its center $j/D$ must lie in the arc enlarged by $1/(4D)$ at
each end. There are at most $D\ell+3/2$ such centers. Thus
\[
N(3,5,D)\le\frac{7D}{15}+9\le D-1.
\]
Finally, $|x|<\pi/2$ contains $(D-1)/2$ full positive intervals of
$\cos(Dx)$. Each has length $\pi/D<\pi/15$, so it meets the positive
set of $\cos(15x)$ in at most one interval. This proves the last count.
The corresponding negative sets are translates by $\pi$.
Substitution into Lemma~\ref{lem:component} gives, in the order displayed,
\[
P(A)\ge\frac{D-1}{30D},\quad
\frac{3D-13}{60D},\quad
\frac{D+3}{30D},\quad
\frac{D-1}{30D}.
\]
All four quantities exceed $1/33$ for $D\ge19$.

\emph{Case II.2: $\chi(s)=-\chi(r)$.} Now $\chi(uv)=-1$ and \eqref{eq:lowconseq} forces
\begin{equation}\label{eq:A48}
\chi(Dr)=-1,\qquad D\le\frac{22(s-r)}{3rs+11}<\frac{22}{3r}.
\end{equation}
If $r\ge3$, then $D=1$, $\chi(r)=-1$, and $r<22/3$.
Thus $r\in\{3,7\}$.
For $r=3$, the condition $3\nmid v$ gives $u=3$ and
$A\supset\{1,3,9\}$.
For $r=7$, the choice $v=7$ gives $A\supset\{1,3,7\}$.
These are excluded by \eqref{eq:triple-exclusion}.
The choice $u=7$ gives $A=(1,3,21,s)$, but \eqref{eq:A48} requires
$s\ge165$, contrary to \eqref{eq:triple-fourth}.

If $r=1$, then $D<22/3$, $\chi(D)=-1$, and $3\nmid D$, so $D=7$.
Equation \eqref{eq:A48} now gives $s\ge99$.
The four arrangements are $(1,3,21,7s)$, $(1,3,7,21s)$,
$(3,7,21,s)$, and $(1,7,21,3s)$.
The second contains $(1,3,7)$; each of the others contains a triple from
the second line of \eqref{eq:triple-fourth} with fourth frequency $\ge99$.
None satisfies \eqref{eq:low}.

Class II therefore gives $A\in\mathcal E_{\mathrm {II}}$, as defined in
\eqref{eq:A49}. Together with Class I, this proves
$A\in\mathcal E_{\mathrm I}\cup\mathcal E_{\mathrm {II}}=\mathcal E$.
\end{proof}

\medskip
\Needspace{6\baselineskip}
\noindent\textbf{Disclosure on artificial intelligence (AI) assistance:}
The authors acknowledge the use of OpenAI tools in the preparation of
this manuscript, including language editing, \LaTeX{} formatting,
organization, and assistance with mathematical derivations and proof
checks. The authors retain full responsibility for the final verification
of all mathematical statements and proofs.

\end{document}